\newtheorem{thm}{Theorem}[section]
\newtheorem{lem}[thm]{Lemma}
\newtheorem{prop}[thm]{Proposition}
\newtheorem{cor}[thm]{Corollary}
\newtheorem*{dualramsey}{Dual Ramsey Theorem}
\newtheorem*{carlsonsimpsonlemma}{Carlson-Simpson Lemma}
\theoremstyle{definition}
\newtheorem{defn}[thm]{Definition}
\newtheorem{question}[thm]{Question}
\theoremstyle{remark}
\newtheorem*{claim}{Claim}
\newcommand{\system}[1]{\mbox{\fontfamily{cmss}\fontshape{n}\fontseries{m}%
    \selectfont#1}}
\newcommand{\RCA}{\system{RCA}\ensuremath{_0}}
\newcommand{\WKL}{\system{WKL}\ensuremath{_0}}
\newcommand{\RT}{\system{RT}}
\newcommand{\ACA}{\system{ACA}\ensuremath{_0}}
\newcommand{\ATR}{\system{ATR}\ensuremath{_0}}
\newcommand{\COH}{\system{COH}}
\newcommand{\DRT}{\system{DRT}}
\newcommand{\BoDRT}{\system{Borel-DRT}}
\newcommand{\BaDRT}{\system{Baire-DRT}}
\newcommand{\CDRT}{\system{CDRT}}
\newcommand{\cDRT}{\system{CDRT}}
\newcommand{\SRT}{\system{SRT}}
\newcommand{\CSL}{\system{CSL}}
\newcommand{\ODRT}{\system{ODRT}}
\newcommand{\OVW}{\system{OVW}}
\newcommand{\VW}{\system{VW}}
\newcommand{\HT}{\system{HT}}
\newcommand{\bdefn}{\begin{defn}}
\newcommand{\edefn}{\end{defn}}
\newcommand{\bthm}{\begin{thm}}
\newcommand{\ethm}{\end{thm}}
\newcommand{\bitem}{\begin{itemize}}
\newcommand{\eitem}{\end{itemize}}
\newcommand{\bpf}{\begin{proof}}
\newcommand{\epf}{\end{proof}}
\newcommand{\restrict}{\upharpoonright}
\newcommand{\kO}{\mathcal{O}}
\newcommand{\fin}{\textup{fin}}
\newcommand{\concat}{^\smallfrown}
\newcommand{\KO}{\mathcal{O}}
\newcommand{\block}[2]{{#1}^{-1}(#2)}
\title{Effectiveness for the Dual Ramsey Theorem}
\author{Damir D.~Dzhafarov}
\author{Stephen Flood}
\author{Reed Solomon}
\author{Linda Westrick}
\begin{document}

\begin{abstract}We analyze the Dual Ramsey Theorem 
for $k$ partitions and $\ell$ colors ($\DRT^k_\ell$) in the context of 
reverse math, effective analysis, and strong reductions.  Over $\RCA$, the Dual Ramsey Theorem 
stated for Baire colorings $\BaDRT^k_\ell$ is equivalent to the statement for clopen colorings 
$\ODRT^k_\ell$ and to a purely combinatorial theorem $\cDRT^k_\ell$. 

When the theorem is stated for Borel colorings and $k\geq 3$,  
the resulting principles are essentially relativizations of $\cDRT^k_\ell$.  
For each $\alpha$, there is a computable Borel code for a 
$\Delta^0_\alpha$ coloring such that any partition homogeneous for it
computes $\emptyset^{(\alpha)}$ or $\emptyset^{(\alpha-1)}$ depending 
on whether $\alpha$ is infinite or finite.  

For $k=2$, we present partial results giving bounds on the effective content of the principle.  
A weaker version for $\Delta^0_n$ reduced colorings is equivalent to 
$\mathsf{D}^n_2$ over $\RCA+\mathsf{I}\Sigma^0_{n-1}$ and in the sense 
of strong Weihrauch reductions.
\end{abstract}

\maketitle

\footnotetext{Dzhafarov was supported in part by NSF Grant DMS-1400267. The authors thank Jos\'{e} Mijares and Ludovic Patey for useful comments and 
discussions during the preparation of this paper.  They also thank the anonymous
referee for many helpful comments which have improved the presentation.}

\section{Introduction}
\label{sec:intro}

This paper concerns the reverse mathematical and computational strength of 
variations of the Dual Ramsey Theorem.  For $k \leq \omega$, 
let $(\omega)^k$ denote the
set of all partitions of $\omega$ into exactly $k$ pieces.
Such a partition can be represented as a surjective function from 
$\omega$ to $k$.  Thus $(\omega)^k$ inherits a natural topology
by considering it as a subset 
of $k^\omega$.

\begin{dualramsey}[\cite{cs}, \cite{pv}]
For any $k,\ell < \omega$,
suppose we have a coloring $(\omega)^k = \cup_{i<\ell} C_i$.  If for each
$i<\ell$, $C_i$
has the property of Baire, 
then there is a partition $p \in (\omega)^\omega$ 
such that any coarsening of $p$ down to 
exactly $k$ pieces has the same color.
\end{dualramsey} 

The reason that this theorem is dual to the original Ramsey's Theorem
concerns what objects are being colored.  In the original Ramsey's theorem, 
we color the $k$-element subsets of $\omega$, which correspond to 
injective functions from $k$ to $\omega$.  In the Dual Ramsey Theorem, we color 
surjective functions from $\omega$ to $k$.

A straightforward choice argument shows that the Dual Ramsey Theorem fails if no 
regularity conditions on the $C_i$ are assumed.
The theorem was first
proved for Borel colorings by Carlson and Simpson \cite{cs}, and 
extended to colorings with the Baire property 
by Pr\"{o}mel and Voigt \cite{pv}.  From the perspective of reverse mathematics 
or computational mathematics, the variation in hypothesis gives us two theorems
to consider.  We call them the
\emph{Borel Dual Ramsey Theorem} and the \emph{Baire Dual Ramsey Theorem}
respectively.

Carlson and Simpson asked for a recursion-theoretic analysis of the 
Borel Dual Ramsey Theorem.  In order to answer this, it is necessary to 
choose a method for encoding the coloring, and one must 
consider the potential effects of a topologically intricate coloring.  Previous 
work side-stepped these issues by restricting attention to open colorings only 
\cite{ms} or by focusing attention only on the main combinatorial lemma 
which Carlson and Simpson used in their proof, and on its 
variable word variants \cite{ms, e, LiuMoninPatey}.  

From the work of \cite{ms}, we know that over $\RCA$, 
$\ODRT^{k}_\ell$ implies $\RT^{k-1}_\ell$, where $\ODRT^k_\ell$ is the 
restriction of the Borel Dual Ramsey Theorem to open colorings only,
and $\RT$ is the usual Ramsey's Theorem.
This provides a lower bound on the strength of the Borel Dual Ramsey 
Theorem.  Conversely, in unpublished work Slaman has shown that 
the Borel Dual Ramsey Theorem follows from $\Pi^1_1\text{-}\mathsf{CA}_0$ \cite{slaman}.
No direct implication is known between the Dual Ramsey Theorems
and the variable word theorems, because the Dual
Ramsey Theorem does not require the ``words'' in its solution to be finite
(and by Proposition \ref{prop:2.13}, it cannot require this), 
while the proof of the Dual Ramsey Theorem
from the variable word theorems uses infinitely many sequential applications
of the latter (Theorem \ref{thm.3.31}).
Overall, this leaves a rather large gap, and we do not close it.  However, we do 
provide significant clarification of the key difficulties.
In particular, for the first time
we directly tackle the topological aspect of the Borel version of the 
theorem.  

\subsection{Combinatorial core of the Borel Dual Ramsey Theorem}

Since the Borel version follows from the Baire version plus the additional
principle ``Every Borel set has the property of Baire'', our first step is to understand
the Baire version. 
 
To be clear, an instance of the Baire Dual Ramsey Theorem is 
a sequence of pairwise
disjoint open sets $O_0,\dots,O_{\ell-1}$ whose union is dense
 in $(\omega)^k$,
and a sequence of dense open sets $\{D_n\}_{n\in\omega}$.
Such an instance simultaneously represents all colorings 
\mbox{$(\omega)^k = \cup_{i<\ell} C_i$}
for which the symmetric difference
$C_i \Delta O_i$ is disjoint from $\cap_n D_n$.  
There may be uncountably many such colorings,
because no condition is placed on how $2^\omega \setminus \cap_n D_n$
 is colored.  Any solution $p \in (\omega)^\omega$ to the Baire version must have 
$(p)^k \subseteq \cap_n D_n$.

In Section \ref{sec:2.1} 
we define a purely combinatorial principle $\CDRT^k_\ell$, which
precisely captures the strength of the Baire version.  In the following,
if $p \in (\omega)^\omega$ and $k\leq \omega$, let $(p)^k$ denote the
set of coarsenings of $p$ into exactly $k$ pieces.  Recalling that we 
consider $p$ as a surjective function $p:\omega\rightarrow k$, let 
$$p^\ast := p \restrict \min p^{-1}(k-1).$$
In other words, $p^\ast$ is a string on alphabet $k-1$,
it tells us by its length what is the smallest element 
 of $p$'s last block, and it tells us how $p$ partitions the finitely many smaller elements
 into its first $k-1$ blocks. 
 Let $(<\omega)^{k-1} = \{p^\ast : p \in (\omega)^k\}$.

\begin{thm}\label{thm:BaireOC} Let $k,\ell < \omega$.  Over $\RCA$, the following are equivalent.
\begin{enumerate}
\item The Baire Dual Ramsey Theorem for $k$ partitions and $\ell$ colors.
\item $\ODRT^k_\ell$
\item $\CDRT^k_\ell$, which states: for every $c : (<\omega)^{k-1} \rightarrow \ell$, 
there is a $p\in(\omega)^\omega$ and a color $i<\ell$ 
such that for every $x \in (p)^{k}$, $c(x^\ast) = i$.
\end{enumerate}
\end{thm}

Thus we have reduced the Baire version of the theorem to a purely combinatorial 
statement.  The proof of the equivalence
is essentially an effectivization of \cite{pv}.

Aside from the results in \cite{ms}, 
the strengths of the $\CDRT^k_\ell$ statements are wide open. 
We include one more result, which was known to Simpson (see \cite[page 268]{cs})
and subsequently rediscovered by Patey  \cite{patey}:
a proof of one case of the Carlson-Simpson Lemma
 from Hindman's Theorem.  
With minor modifications, we adapt this
proof in Section \ref{subsec:Hindman} to show that Hindman's Theorem for $\ell$ colorings 
implies the stronger $\CDRT^3_{\ell}$. See Figure \ref{fig:1.1}
for a summary of what is known about the combinatorial core of the Dual 
Ramsey Theorem.

We close Section \ref{sec:2} with a self-contained proof of 
$\CDRT^k_{\ell}$ from the Carlson-Simpson Lemma (Theorem \ref{thm.3.31}).  
In our proof, the only non-constructive steps are 
$\omega \cdot (k-2)$ nested applications of the Carlson-Simpson Lemma.
  
The earliest claim we are aware of for a proof of $\CDRT^k_{\ell}$ is
in \cite{pv}, where a generalization of $\CDRT^k_{\ell}$ called \emph{Theorem A} 
is attributed to a preprint of 
Voigt titled ``Parameter words, trees and vector spaces''. However, as far as we can tell, this paper never appeared.  Another proof of $\CDRT^k_{\ell}$ can be 
found in \cite{TodorcevicRamsey}, but as a corollary of a larger theory.

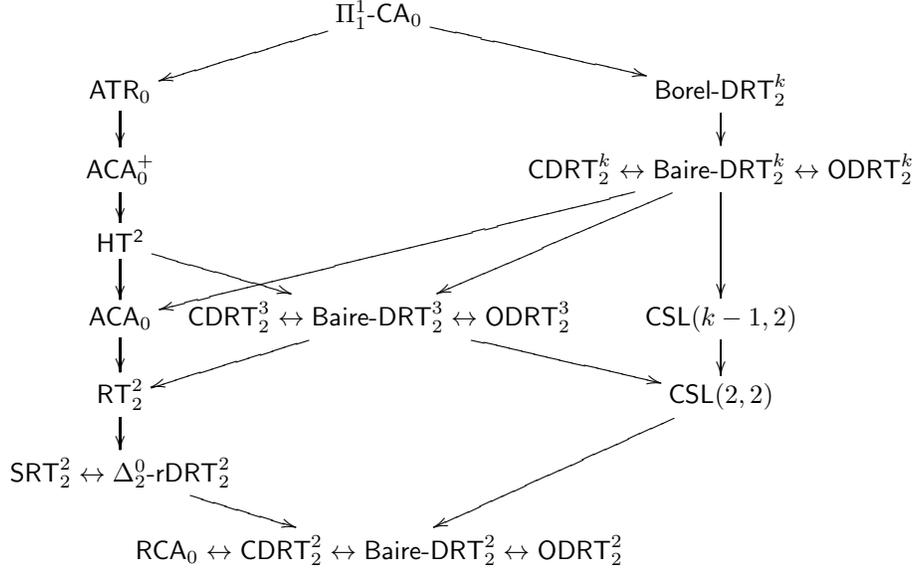
\begin{figure}\label{fig:1.1}\caption{Implications over $\RCA$ between variants of the Dual Ramsey Theorem considered in this paper and some related principles. 
The parameter $k \geq 4$ is arbitrary.}
\[
\xymatrix @C=-3.5pc@R=1pc{
& \Pi^1_1\text{-}\mathsf{CA}_0 \ar[dr] \ar[dl]\\
\mathsf{ATR}_0 \ar[d] & & \mathsf{Borel}\text{-}\mathsf{DRT}^k_2 \ar[d]\\
\mathsf{ACA}^+_0 \ar[d] & & \mathsf{CDRT}^k_2 \leftrightarrow \mathsf{Baire}\text{-}\mathsf{DRT}^k_2 \leftrightarrow  \mathsf{ODRT}^k_2 \ar[dd] \ar[ddl] \ar[ddll] \\ 
\mathsf{HT} \ar[dr] \ar[d]\\
\mathsf{ACA}_0 \ar[d] & \mathsf{CDRT}^3_2 \leftrightarrow \mathsf{Baire}\text{-}\mathsf{DRT}^3_2 \leftrightarrow  \mathsf{ODRT}^3_2 \ar[dl] \ar[dr] & \mathsf{CSL}(k-1,2) \ar[d]\\
\mathsf{RT}^2_2 \ar[d] & & \mathsf{CSL}(2,2) \ar[ddl] \\
\mathsf{SRT}^2_2 \leftrightarrow \Delta^0_2\text{-}\mathsf{rDRT}^2_2 \ar[dr]\\
& \mathsf{RCA}_0 \leftrightarrow \mathsf{CDRT}^2_2 \leftrightarrow \mathsf{Baire}\text{-}\mathsf{DRT}^2_2 \leftrightarrow  \mathsf{ODRT}^2_2
}
\]
 \end{figure}

\subsection{Computational strength of the Borel Dual Ramsey Theorem}

In Sections \ref{sec:Borel3} and \ref{sec:4}, we consider the Borel Dual 
Ramsey Theorem, or $\BoDRT$, from the perspective of effective 
combinatorics.  The behavior is different depending on the number of 
pieces $k$ in the partition, with the $k\geq 3$ case being addressed in Section 
\ref{sec:Borel3} and the $k=2$ case in Section \ref{sec:4}.

When $k\geq 3$, given a fast-growing function $f$ one can design 
an open, $f$-computable 
coloring such that all of its homogeneous partitions compute 
a function which dominates $f$ (this was already essentially done 
in \cite{ms}).  But if $f$ is hyperarithmetic,
that same coloring has an effective Borel code as a $\Delta^0_\alpha$
set.  Thus by sneaking the computation of $f$ into an 
effective Borel code, we obtain a computable instance of 
$\BoDRT^3_2$.  As a result, $\BoDRT^3_2$ can be informally 
considered as some kind of hyperjump of $\ODRT^3_2$.  Formally,
we have the following in Theorem \ref{thm:hyp}.

\begin{thm}
For every computable ordinal $\alpha>0$ and every $k\geq 3$,
there is a computable Borel code for a $\Delta^0_\alpha$ coloring 
$c:(\omega)^k\rightarrow 2$ such that every infinite 
partition homogeneous for $c$ computes $\emptyset^{(\alpha)}$
if $\alpha$ is infinite, or $\emptyset^{(\alpha-1)}$ if $\alpha$ is finite.
\end{thm}

The preceding theorem gives a coding lower bound on the complexity 
of solutions for $k\geq 3$.  In contrast,
we remark that the best known basis theorem for the 
$k\geq 3$ case is still the following result of Slaman \cite{slaman}:
Every hyperarithmetic instance of the Borel Dual Ramsey Theorem
has a hyperarithmetically low solution. This result can also be 
extracted from our analysis as follows.  Given a Borel coloring,
there is a hyperarithmetic witness that it has the property of Baire.
Use Theorem \ref{thm:BaireOC} to computably reduce this instance
of the Baire Dual Ramsey Theorem to an instance of $\CDRT$.
It is arithmetic to check whether 
a given partition $p\in(\omega)^\omega$ 
is a solution to a given instance $c$ of $\CDRT$.  
Therefore the collection of solutions is non-empty $\Sigma^1_1$.
Applying the Gandy Basis Theorem gives the desired solution.

When $k=2$, it is likewise possible to create effectively Borel instances
which correspond to hyperarithmetically computable open
colorings.  However, there are two important differences with 
the $k=2$ case.  First, $\ODRT^2_\ell$ is computably true.  
As a consequence, when $k=2$ the Borel variant has
a sharper basis theorem.
\begin{thm}Every $\Delta^0_n$ instance of $\BoDRT^2_\ell$ has a 
$\Delta^0_n$ solution.
\end{thm}
 This result follows from the more general Theorem \ref{thm:lifting}.
Note that the $\Delta^0_n$ instance is a subset of $(\omega)^k$ which
could be topologically intricate, while the solution is a single
$\Delta^0_n$ partition $p \in (\omega)^\omega$.

The second difference in the $k=2$ case is that $\CDRT^2_\ell$ 
is Weihrauch equivalent to the infinite pigeonhole principle $\RT^1_\ell$.
(Observe that an instance of $\CDRT^2_\ell$ is essentially 
a coloring of $\omega$.)  This immediately offers lower bounds:
for each $n$,
$D^n_\ell \leq_{\mathrm{sW}} \BoDRT^2_\ell$, where $D^n_\ell$ 
is the problem whose instances are $\Delta^0_n$ colorings 
$c:\omega\rightarrow \ell$ and whose solutions are the infinite 
sets monochromatic for $c$.  The question is whether these 
could possibly be equivalences when $\BoDRT^2_\ell$ is
likewise restricted to $\Delta^0_n$ instances.  We are 
only able to show a partial result in this direction (Theorem \ref{thm:sW}).

\begin{thm}
Let $\Delta^0_n$-$\mathsf{rDRT}^2_2$ be the restriction of $\BoDRT^2_2$
to instances $c$ which are given by $\Delta^0_n$ formulas 
and for which $c$ is \emph{reduced}, meaning that $c(p)$ 
depends only on $p^\ast$ for all $p\in(\omega)^2$.  Then
\begin{enumerate}
\item $\Delta^0_n$-$\mathsf{rDRT}^2_2 \equiv_{\mathrm{sW}} \mathsf{D}^n_2$.
		\item Over $\RCA + \mathsf{I}\Sigma^0_{n-1}$, $\Delta^0_n$-$\mathsf{rDRT}^2_2$ is equivalent to $\mathsf{D}^n_2$.  
\end{enumerate}
\end{thm}

\subsection{Reverse mathematics and Borel sets}

In Section \ref{sec:codes}, we consider problems motivated by
the reverse mathematics 
of the Borel Dual Ramsey Theorem.  We observe that 
the Borel Dual Ramsey Theorem can be obtained by composing 
``Every Borel set has the property of Baire'' (let us call it $\mathsf{BP}$)
with the Baire Dual Ramsey Theorem.  So a natural next step is to 
understand the strength of $\mathsf{BP}$.  We show the following
as a part of Theorem \ref{ATRequiv}.
\begin{thm}\label{thm:sillyATR}
Over $\RCA$, $\ATR$ is equivalent to the following statement.
For every Borel code $B$, there is some point $x$ 
such that $x \in B$ or $x \not \in B$. 
\end{thm}
This result mainly shows that the usual definition of Borel sets,
which is given in \cite{sosa} using $\ATR$ as a base theory,
really does not make sense in the absence of $\ATR$.  This 
provides an obstacle to a satisfactory analysis of $\mathsf{BP}$.
While $\mathsf{BP}$ follows from $\ATR$, (Proposition \ref{prop:BP}), 
in the reversal $\mathsf{BP}$ formally implies $\ATR$ only due to the 
technical reason above.  We leave a deeper analysis 
of $\mathsf{BP}$
and the Borel Dual Ramsey Theorem to future work \cite{ADMSW}.

The proof of Theorem \ref{thm:sillyATR} 
uses a method of effective transfinite recursion, $\mathsf{ETR}$, which is available in $\ACA$ (and possibly in weaker 
systems). Greenberg and Montalb\'{a}n \cite{GM} use 
$\mathsf{ETR}$ to establish equivalences of $\mathsf{ATR}_0$ and claim that $\mathsf{ETR}$ is provable in $\mathsf{RCA}_0$. However, their proof of $\mathsf{ETR}$ 
overlooks an application of $\Sigma^0_1$ transfinite induction, and in general, transfinite induction for $\Sigma^0_1$ formulas does not hold in $\mathsf{RCA}_0$. 
While the main results in \cite{GM} continue to hold because Greenberg and Montalb\'{a}n show the classified theorems imply $\mathsf{ACA}_0$ without reference to 
$\mathsf{ETR}$ (and hence can use $\mathsf{ETR}$ in $\mathsf{ACA}_0$ to complete the equivalence with $\mathsf{ATR}_0$), we have included a proof of 
$\mathsf{ETR}$ in Section \ref{sec:codes} 
to make explicit the use of transfinite induction.

In the final Section \ref{sec:open} we list a number of open questions.

\section{Notation}
\label{sec:notation}

We use $\omega$ to denote the natural numbers, which in subsystems of $\mathsf{Z}_2$ 
is the set $\{ x : x=x \}$, often 
denoted by $\mathbb{N}$ in the literature. Despite this notation, we do not restrict ourselves to $\omega$-models. Second, when we refer to the parameters $k$ and 
$\ell$ in versions of the Dual Ramsey Theorem, we assume $k$ and $\ell$ are arbitrary standard numbers with $k, \ell \geq 2$. By a statement such as 
``$\RCA$ proves $\BoDRT^k_{\ell}$ implies $\BaDRT^k_{\ell}$'', we mean, for all $k, \ell \geq 2$, $\RCA \vdash \BoDRT^k_{\ell} \rightarrow \BaDRT^k_{\ell}$. 
For many results, the quantification over $k$ and $\ell$ can be pulled inside the formal system. However, in some cases, issues of induction arise and 
we wish to set those aside in this work.

For $k \leq \omega$, let $k^{< \omega}$ denote the set of finite strings over $k$ and let $k^\omega$ denote the set of  functions $f:\omega \rightarrow k$. As noted above, 
unless explicitly stated otherwise, we will always assume that $k \geq 2$. 
For $\sigma \in k^{< \omega}$, $|\sigma|$ denotes the length of $\sigma$, and if $|\sigma| > 0$, $\sigma(0), \ldots, \sigma(|\sigma|-1)$ denote the entries of $\sigma$ in order. 
For $p \in k^{\omega}$ and $\sigma \in k^{< \omega}$, we write $\sigma \prec p$ if $\sigma$ is an initial segment of $p$.  Similarly, if $\sigma, \tau \in
k^{< \omega}$, we write $\sigma \preceq \tau$ if $\sigma$ is an initial segment of $\tau$ and $\sigma \prec \tau$ if $\sigma$ is a
proper initial segment of $\tau$. We write $p \restrict n$ to denote the string obtained by restricting 
the domain of $p$ to $n$. The standard (product) topology on $k^{\omega}$ is generated by basic clopen sets of the form
\[
[\sigma] = \{ p \in k^{\omega} : \sigma \prec p \}
\]
for $\sigma \in k^{< \omega}$. 

We use the following notational conventions for partitions.
For $k \leq s \leq \omega$, we use $(s)^k$ to denote the 
set of all partitions of $s$ into exactly $k$ pieces.
The pieces are also called \emph{blocks}.
Each such partition can be viewed
as a surjective function $p:s\rightarrow k$, where the 
blocks are the sets $\block{p}{i}$ for $i<k$.  More than one 
surjective function can describe the same partition,
so we pick a canonical one.
We say that $p:s\rightarrow k$ 
is \emph{ordered} if for each $i<j<k$, 
$\min \block{p}{i} < \min \block{p}{j}$. 
We then more formally define the \emph{$k$-partitions of $s$} as
$$(s)^k = \{ p \in k^s : \text{ $p$ is surjective and ordered}\}.$$
 We also let $(<\omega)^k$ denote $\cup_{r \in \omega} (r)^k$.
 
If $k \leq s \leq t \leq \omega$ and $p \in (t)^s$, then we define
$(p)^k = \{x \circ p  : x \in (s)^k\}$.  In English, if $p$ is a partition of $t$ 
into exactly $s$ pieces, $(p)^k$ is the set of ways to further coarsen 
$t$ down to exactly $k$ pieces, so we call $(p)^k$ the set of 
\emph{$k$-coarsenings} of $p$.

If $(\omega)^k = \cup_{i<\ell} C_i$ and $p \in (\omega)^\omega$ 
with $(p)^k \subseteq C_i$, then we say that $p$ is \emph{homogeneous}
for the color $C_i$.

The set $(\omega)^k$ inherits the subspace topology from
$k^{\omega}$ with basic open sets of the form $[\sigma] \cap (\omega)^k$ for $\sigma \in k^{< \omega}$.   This topology is also natural
from the partition perspective.  For example, 
if we considered a partition instead as an equivalence
relation $R\subseteq \omega\times\omega$, the same topology is also
generated by declaring $\{R : (n,m)\in R\}$ to be clopen for each pair $(n,m) \in \omega\times\omega$.

The space $(\omega)^k$ is not compact since, for example, 
the collection of open sets $[0^n1]$ for $n \geq 1$ cover $(\omega)^2$ but this collection has no finite subcover. However, if $\sigma \in (<\omega)^k$, then 
$[\sigma] \subseteq (\omega)^k$ and $[\sigma]$ is a compact clopen subset of $(\omega)^k$. To generate the topology on $(\omega)^k$, it suffices to restrict to 
the basic clopen sets of the form $[\sigma]$ with $\sigma \in (<\omega)^k$.  Although the
notation $[\sigma]$ is ambiguous about whether the ambient space is $k^{\omega}$ or $(\omega)^k$ (or $\ell^{\omega}$ or $(\omega)^\ell$ for some $\ell > k$), 
the meaning will be clear from context.

We denote the $i$th block of the partition $p$ by $\block{p}{i}$  (we start 
counting the blocks at 0, so the last block of a $k$-partition is indexed 
by $i=k-1$).  We denote the least element of $\block{p}{i}$ by 
$\mu^p(i)$.  If $p \in (\omega)^k$,
 we will often have use for the 
string $p^\ast = p\restrict\mu^p(k-1)$.
We can also apply this notation if $p \in (s)^k$ for any $s \geq k$.

Sometimes it is convenient to consider colorings of $(p)^k$ 
for some $p \in (\omega)^\omega$, and then ask for 
a homogeneous partition $q \in (p)^\omega$.  This is 
not really more general than the case we have been considering,
because a coloring
$(p)^k = \cup_{i<\ell} C_i$ corresponds canonically 
to the coloring of $(\omega)^k$ defined by
\begin{equation}\label{eqn:canonical}
x \in \widehat C_i \iff x \circ p \in C_i.
\end{equation}
In this case any $y\in (\omega)^\omega$ is homogeneous 
for $\{\widehat C_i\}_{i<\ell}$ if and only if $y \circ p$ 
is homogeneous for $\{C_i\}_{i<\ell}$.

\section{The Baire Dual Ramsey Theorem}\label{sec:2}

\subsection{Three versions of the Baire Dual Ramsey Theorem}\label{sec:2.1} 

We formulate three versions of the Baire
Dual Ramsey Theorem in second order arithmetic and show they 
are equivalent over $\RCA$.

Coding colorings or sets with the Baire property in second order arithmetic is complicated by the fact that there are $2^{\mathfrak{c}}$ (where $\mathfrak{c} = 2^{\aleph_0}$) 
many subsets of $(\omega)^k$ or $k^{\omega}$ with the Baire property.  However, 
if we identify colorings which are the same after discarding a meager
set, then there are only 
continuum many with the Baire property.  Specifying only an equivalence class 
of colorings is consistent with how theorems 
which hypothesize the Baire property usually work. 
They start 
by fixing a comeager approximation to the set in question and then 
proceed by working exclusively with this approximation. This classical observation motivates our definition of a code for a Baire coloring.

\begin{defn}[$\RCA$]
\label{defn:reverseopen}
A \textit{code for an open set in} $(\omega)^k$ is a set $O \subseteq
\omega \times (<\omega)^k$. We say that a partition $p \in (\omega)^k$
is \textit{in the open set coded by} $O$ (or just \textit{in} $O$ and
write $p \in O$) if there is a pair $\langle n, \sigma \rangle \in O$ such that $p \in
[\sigma]$.

A \textit{code for an closed set in} $(\omega)^k$ is also a set $V \subseteq
\omega \times (<\omega)^k$. In this case, we say $p \in (\omega)^k$
\textit{is in} $V$ (and write $p \in V$) if for all pairs $\langle n, \sigma \rangle \in V$, $p \not \in
[\sigma]$.
\end{defn}

\begin{defn}[$\RCA$]
An open set $O \subseteq (\omega)^k$ is \textit{dense} if for all $\tau \in (<\omega)^k$, $[\tau] \cap O \neq \emptyset$. That is, for all $\tau$, there is a pair 
$\langle n, \sigma \rangle \in O$ such that $\sigma$ and $\tau$ are comparable as strings.
\end{defn}

\begin{defn}[$\RCA$]
\label{defn:Baire}
A \textit{code for a Baire} $\ell$-\textit{coloring of} $(\omega)^k$ is a sequence of dense open sets $\{D_n\}_{n<\omega}$ together 
with a sequence of pairwise disjoint
open sets $\{O_i\}_{i<\ell}$ such that $\bigcup_{i < \ell} O_i$ 
is dense in $(\omega)^k$. 
\end{defn}

Recall that $\RCA$ suffices to prove the Baire Category Theorem: if $\{ D_n \}_{n < \omega}$ is a sequence of dense open sets, then $\cap_{n < \omega} D_n$ is dense. 
Classically, if a coloring $\cup_{i<\ell} C_i = (\omega)^k$ has the Baire property, then it has a comeager  
approximation given by sequences of open sets $\{O_i\}_{i<\ell}$ and $\{D_n\}_{n<\omega}$ such that 
each $D_n$ is dense and for each $p \in \cap_{n<\omega} D_n$, $p \in C_i$ if and only if $p \in O_i$. 

We abuse terminology and refer to the Baire code as a \textit{Baire} $\ell$-\textit{coloring} of $(\omega)^k$. 
Similarly, an \textit{open} $\ell$-\textit{coloring} 
is a coloring $(\omega)^k = \cup_{i < \ell} O_i$ in which the $O_i$ are open
 and pairwise disjoint.
\begin{defn}\label{def:2.1}
For each (standard) $k, \ell \geq 2$, we define $\BaDRT^k_{\ell}$, $\ODRT^k_{\ell}$ and $\cDRT^k_{\ell}$ in $\RCA$ as follows.
\begin{enumerate}
\item $\BaDRT^k_\ell$: For every Baire $\ell$-coloring $\{O_i\}_{i<\ell}$ and $\{D_n\}_{n<\omega}$ of $(\omega)^k$, there is a partition $p \in
  (\omega)^\omega$ and a color $i < \ell$ such that for all $x \in (p)^k$, $x
  \in O_i \cap \bigcap_n D_n$.
\item $\ODRT^k_\ell$: For every open $\ell$-coloring $(\omega)^k = \cup_{i<\ell} O_i$, there is a partition $p \in (\omega)^\omega$ and
  a color $i < \ell$ such that for all $x \in (p)^k$, $x \in O_i$.
\item $\cDRT^k_\ell$: For every coloring $c:(<\omega)^{k-1}\rightarrow \ell$, there is a partition $p \in (\omega)^\omega$
  and a color $i < \ell$ such that for all $x \in (p)^k$, $c(x^\ast) = i$.
\end{enumerate}
\end{defn}

Our first goal is to show that the instances of $\cDRT^k_\ell$ are in one-to-one canonical correspondence with those instances of
$\ODRT^k_l$ for which the coloring of $(\omega)^k$ is \emph{reduced}. We define a reduced coloring without considering the coding method 
and note that any reduced coloring is open. 

\begin{defn}
\label{reduced-coloring} 
Let $y \in (\omega)^\omega$ and $m< k$.  A coloring of $(y)^k$ is
\emph{$m$-reduced} if whenever $p,q \in (y)^k$ and $p\restrict
\mu^p(m) = q\restrict \mu^q(m)$, $p$ and $q$ have the same color.
A coloring of $(y)^k$ is \emph{reduced} if it is $(k-1)$-reduced.
\end{defn}  

Note that a coloring is reduced means that the color of 
each partition $p \in (y)^k$ depends only on $p^\ast$.

\begin{prop}[$\RCA$]
\label{prop:2.2}
The following are equivalent.
\begin{enumerate}
\item $\CDRT^k_\ell$.
\item For every open reduced coloring $(\omega)^k=\cup_{i<\ell} O_i$,
there are $p\in(\omega)^\omega$ and $i<\ell$ such that $(p)^k \subseteq O_i$.
\item For every $y \in (\omega)^{\omega}$ and open reduced coloring $(y)^k = \cup_{i<\ell} O_i$, there
  are $p\in(y)^\omega$ and $i<\ell$ such that  $(p)^k\subseteq O_i$.
\end{enumerate}
\end{prop}

\begin{proof}  Clearly (3) implies (2).  To see that (2) implies (3), fix $y \in (\omega)^{\omega}$ and 
a reduced open coloring $(y)^k = \cup_{i<\ell} O_i$. 
Define 
$$\widehat{O}_i = \{ \langle n, \tau\rangle : \tau \in (<\omega)^k \text{ and } 
\tau \circ y \in O_i\}$$
 It is straightforward to check that the coloring $(\omega)^k = \cup_{i < \ell} \widehat{O}_i$ 
is also reduced, and that whenever $x$ is homogeneous for $\cup_{i<\ell} \widehat{O}_i$
then $x\circ y$ is homogeneous for $\cup_{i<\ell} O_i$.

To see (2) implies (1), fix $c:(<\omega)^{k-1}\rightarrow \ell$. For each $i < \ell$, let
\[
O_i = \{\langle 0, \sigma^{\smallfrown}(k-1)\rangle : \sigma \in(<\omega)^{k-1} \text{ and }  c(\sigma) = i \}.
\]
Then $(\omega)^k = \cup_{i < \ell} O_i$ is an open 
reduced coloring of $(\omega)^k$, and any infinite partition 
which is homogeneous for it is also homogeneous for $c$.

For the implication from (1) to (2), assume $\CDRT^k_\ell$, 
and suppose we are given a coloring $\cup_{i<\ell} O_i$.
Now, for each $\sigma\in (<\omega)^{k-1}$, we 
define $c(\sigma)$ as follows.  Note that for some $i<\ell$, some 
$\tau\succeq \sigma^\smallfrown(k-1)$, and some $n$, 
we have $\langle n, \tau \rangle \in O_i$.  Letting $\langle n, \tau, i\rangle$
be the least triple with this property, we define $c(\sigma)=i$.

Let $i < \ell$ and $p \in (\omega)^{\omega}$ be the result of 
applying $\CDRT^k_\ell$ to $c$.  Given $x \in (p)^k$, we 
know that $c(x^\ast) = i$.  Let $n, \tau$ be the witnesses used 
in the definition of $c(x^\ast)$.  Let $q \in (\omega)^k$ with 
$q \succ \tau$.  Then $q \in O_i$.  Since $O_i$ 
is reduced and $q^\ast = \tau^\ast = x^\ast$, $x \in O_i$.
Therefore, $p$ is homogeneous for the coloring $\cup_{i<\ell} O_j$, 
as required.
\end{proof}

It is now routine to show that the number of colors does not matter.

\begin{prop}[$\RCA$]
$\cDRT^k_{\ell}$ and $\cDRT^k_{2}$ are equivalent.
\end{prop}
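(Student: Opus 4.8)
The plan is the standard ``merge colors and iterate'' argument, carried out by (meta-)induction on the standard parameter $\ell\geq 2$. Throughout I would use the paragraph following Definition \ref{def:2.1} to treat $\cDRT^k_\ell$ as a statement about colorings $c:(\omega)^{k-1}_\fin\to\ell$, which removes the bother of keeping the colors disjoint.

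The direction $\cDRT^k_\ell\to\cDRT^k_2$ should be immediate: given $c:(\omega)^{k-1}_\fin\to 2$, set $C_0=c^{-1}(0)$ and $C_i=c^{-1}(1)$ for $1\leq i<\ell$, apply $\cDRT^k_\ell$, and note that any $p$ homogeneous with color $i$ is homogeneous for $c$ with color $\min(i,1)$. For $\cDRT^k_2\to\cDRT^k_\ell$ I would induct on $\ell$, with $\ell=2$ trivial. For the inductive step, reasoning in $\RCA$ from $\cDRT^k_2$ and given $c:(\omega)^{k-1}_\fin\to\ell$: first merge colors $1,\dots,\ell-1$, i.e.\ put $c_0(\sigma)=0$ if $c(\sigma)=0$ and $c_0(\sigma)=1$ otherwise, and apply $\cDRT^k_2$ to obtain $p_0\in(\omega)^\omega$ and $j<2$ homogeneous for $c_0$. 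If $j=0$ we are done with $p=p_0$; if $j=1$, every $x\in(p_0)^k$ has $c(x\restrict\mu^x(k-1))\in\{1,\dots,\ell-1\}$, and I want to recurse on the $k$-coarsenings of $p_0$.

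To recurse I would pull the coloring back along the bijection $r\mapsto r\circ p_0$ from $(\omega)^k$ onto $(p_0)^k$ (using more generally that $(q\circ p_0)^k=\{s\circ p_0:s\in(q)^k\}$ for $q\in(\omega)^\omega$). Concretely, define $\widehat c:(\omega)^{k-1}_\fin\to\ell$ by $\widehat c(\sigma)=c(\sigma\circ\tau_\sigma)$, where $\tau_\sigma:=p_0\restrict\mu^{p_0}(|\sigma|)$; this is $\Delta^0_1$ in $c\oplus p_0$, hence available in $\RCA$. I would then establish two facts. First, for $s\in(\omega)^k$ with $\sigma:=s\restrict\mu^s(k-1)$, one has $\mu^{s\circ p_0}(k-1)=\mu^{p_0}(\mu^s(k-1))=|\tau_\sigma|$ and $(s\circ p_0)\restrict\mu^{s\circ p_0}(k-1)=\sigma\circ\tau_\sigma$, so $\widehat c$ records exactly the $c$-color seen on $(q\circ p_0)^k$. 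Second, every string $\sigma\circ\tau_\sigma$ arises as $x\restrict\mu^x(k-1)$ for some $x\in(p_0)^k$ — take $x=r\circ p_0$ with $r\restrict|\sigma|=\sigma$ and $r(n)=k-1$ for all $n\geq|\sigma|$ — so that in case $j=1$, $\widehat c$ in fact maps into $\{1,\dots,\ell-1\}$ and, after relabeling, is an $(\ell-1)$-coloring. Applying the induction hypothesis to $\widehat c$ gives $q\in(\omega)^\omega$ homogeneous with some color $i\geq 1$, and then $p:=q\circ p_0\in(\omega)^\omega$ is homogeneous for $c$ with color $i$.

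The only real work is the bookkeeping in the first fact: checking that $\sigma\circ\tau_\sigma\in(\omega)^{k-1}_\fin$ (via the composition facts for ordered strings recorded just before Lemma \ref{coarsening}), and that the truncation of $s\circ p_0$ at the first appearance of block $k-1$ is exactly $\sigma\circ\tau_\sigma$, which reduces to $p_0$ being ordered and surjective (so $\mu^{s\circ p_0}(k-1)=\mu^{p_0}(\mu^s(k-1))$ and $p_0(n)<\mu^s(k-1)$ for $n$ below that index). An alternative would be to route everything through Proposition \ref{prop:2.2}, iterating the two-color reduced-coloring statement over successively coarser base partitions $y$; this trades the composition computation for the need to re-code open subcolorings on nested subspaces $(p_0)^k\subseteq(y)^k$, which is comparable effort, so I would take the direct route. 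Since $\ell$ is a standard number the induction is at the meta-level, so no transfinite or $\Sigma^0_n$ induction inside $\RCA$ is required.
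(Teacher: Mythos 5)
Your proposal is correct and takes essentially the same approach as the paper's one-line proof: collapse colors, apply $\cDRT^k_2$, and iterate finitely many times, transferring the coloring between $(\omega)^k$ and $(p_0)^k$ via the canonical composition $r \mapsto r \circ p_0$ (the content of Lemma~\ref{stringtransform}). The string-level bookkeeping you carry out (the identities $\mu^{s\circ p_0}(k-1)=\mu^{p_0}(\mu^s(k-1))$ and $(s\circ p_0)\restrict\mu^{s\circ p_0}(k-1)=\sigma\circ\tau_\sigma$) is a direct unpacking of that correspondence, and since $\ell$ is standard the induction is external, so the argument goes through in $\RCA$ as claimed.
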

\begin{proof}
Collapse colors and iterate $\cDRT^k_2$ finitely many times, 
using Proposition \ref{prop:2.2}.
\end{proof}

The next proof is essentially an effective version of an argument in \cite{pv}.

\begin{thm}[$\RCA$]
\label{thm:2.4}
$\BaDRT^k_{\ell}$, $\ODRT^k_\ell$ and $\cDRT^k_\ell$ are equivalent.
\end{thm}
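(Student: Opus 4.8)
The plan is to prove the three principles equivalent over $\RCA$ by going around the cycle
\[
\BaDRT^k_\ell \;\Longrightarrow\; \ODRT^k_\ell \;\Longrightarrow\; \cDRT^k_\ell \;\Longrightarrow\; \BaDRT^k_\ell ,
\]
where the last implication will be split as $\cDRT^k_\ell \Rightarrow \ODRT^k_\ell \Rightarrow \BaDRT^k_\ell$. Two of the arrows are immediate. For $\BaDRT^k_\ell \Rightarrow \ODRT^k_\ell$: given an open coloring $(\omega)^k = \bigcup_{i<\ell} O_i$, regard it as a Baire coloring by letting every $D_n$ be a code for all of $(\omega)^k$, which is dense open; a partition $p$ homogeneous for this Baire coloring satisfies $(p)^k \subseteq O_i \cap \bigcap_n D_n = O_i$. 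For $\ODRT^k_\ell \Rightarrow \cDRT^k_\ell$: given a coloring $(\omega)^{k-1}_\fin = \bigcup_{i<\ell} C_i$, put $O_i = \{\langle 0, \sigma\cat(k-1)\rangle : \sigma \in C_i\}$; these open sets cover $(\omega)^k$, since any $p \in (\omega)^k$ lies in $[\,(p\restrict\mu^p(k-1))\cat(k-1)\,]$, and if $(p)^k \subseteq O_i$ then $p$ is homogeneous for the given coloring, because $\sigma\cat(k-1)\prec x$ with $\sigma \in (\omega)^{k-1}_\fin$ forces $x\restrict\mu^x(k-1) = \sigma$. (This is the construction already used in the proof of Proposition \ref{prop:2.2}.)

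The content is in the two remaining arrows, each an effective version of an argument from \cite{pv} carried out by a fusion. For $\cDRT^k_\ell \Rightarrow \ODRT^k_\ell$: let $(\omega)^k = \bigcup_{i<\ell} O_i$ be an open coloring, which by the remarks following Definition \ref{def:2.1} we may take to have pairwise disjoint colors. I build $q \in (\omega)^\omega$ by primitive recursion, one block at a time, so that the coloring restricted to $(q)^k$ is \emph{reduced} in the sense of Definition \ref{reduced-coloring}. Having fixed the first $j$ blocks, i.e.\ $\tau = q\restrict\mu^q(j) \in (\omega)^j_\fin$, consider the finitely many $(k-1)$-coarsenings $\eta$ of $\tau$, which by Lemma \ref{coarsening} have the form $\sigma\circ\tau$ for $\sigma \in (\omega)^{k-1}_\fin$ with $|\sigma| = j$. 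For each such $\eta$ the point $\eta\cat(k-1)^\omega$ lies in $(\omega)^k$, hence in some $O_{i(\eta)}$, so a search produces $b_\eta$ with $[\eta\cat(k-1)^{b_\eta}] \subseteq O_{i(\eta)}$; I make the $j$-th block of $q$ have length $\max_\eta b_\eta$. Then every $x \in (q)^k$ extending $\eta\cat(k-1)$ assigns color $k-1$ to the whole $j$-th block, so $x$ agrees with $\eta\cat(k-1)^{b_\eta}$ on its first $|\eta|+b_\eta$ entries and hence $x \in O_{i(\eta)}$; since $x\restrict\mu^x(k-1) = \eta$, the color of $x$ depends only on $x\restrict\mu^x(k-1)$, which is reducedness. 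Now $\cDRT^k_\ell$ supplies Proposition \ref{prop:2.2}(2), which applied with $y = q$ to the reduced open coloring $\{O_i \cap (q)^k\}$ of $(q)^k$ yields $p \in (q)^\omega \subseteq (\omega)^\omega$ and $i < \ell$ with $(p)^k \subseteq O_i$.

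For $\ODRT^k_\ell \Rightarrow \BaDRT^k_\ell$: fix a Baire coloring $\{O_i\}_{i<\ell}$, $\{D_n\}_{n<\omega}$. Since $\bigcup_i O_i$ is dense open, and $\RCA$ proves that a finite intersection of dense open sets is again dense open, we may add $\bigcup_i O_i$ to the list of $D_n$'s and assume $\{D_n\}$ is decreasing, so $\bigcap_n D_n \subseteq \bigcup_i O_i$. Using the minimization function $f$ of Lemma \ref{lem:density}, I build, again by primitive recursion block by block, a partition $q_0 \in (\omega)^\omega$ with $(q_0)^k \subseteq \bigcap_n D_n$, so that $\{O_i \cap (q_0)^k\}$ is a \emph{total} open coloring of $(q_0)^k$. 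By Lemma \ref{stringtransform}, formalized in $\RCA$, transport this coloring along $r \mapsto r\circ q_0$ to a total open coloring $\{\widehat O_i\}$ of $(\omega)^k$; applying $\ODRT^k_\ell$ gives $r \in (\omega)^\omega$ and $i$ with $(r)^k \subseteq \widehat O_i$, whence $(r\circ q_0)^k \subseteq O_i \cap (q_0)^k$ by Lemma \ref{stringtransform} again. Thus $p := r\circ q_0 \in (q_0)^\omega$ satisfies $(p)^k \subseteq O_i \cap \bigcap_n D_n$, the conclusion of $\BaDRT^k_\ell$.

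The main obstacle is the construction of $q_0$: this is the classical Baire-category fusion underlying the Dual Ramsey Theorem, and the delicacy is that at each stage $t$ only finitely many blocks of $q_0$ have been committed, all of the (finitely many, by Lemma \ref{coarsening}) finite $k$-coarsenings of that initial segment must be driven into $\bigcap_{n<t} D_n$ at once by a single further extension extracted from $f(\cdot,t)$, and each such extension spawns new finite coarsenings to be handled at later stages. Verifying that this bookkeeping can be arranged as an honest primitive recursion — so that the argument goes through in $\RCA$, with the care over induction alluded to in the introduction — is the technical heart of the proof; by contrast the reducedness fusion of the previous paragraph is transparent precisely because the coloring there is already total on all of $(\omega)^k$.
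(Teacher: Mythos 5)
Your route is genuinely different from the paper's, and partly simpler, but it is incomplete at the crux. The paper proves $\cDRT^k_\ell \Rightarrow \BaDRT^k_\ell$ with one fusion whose stage $s{+}1$ drives every $k$-coarsening of the committed segment simultaneously into some $O_i$ and into $\cap_{n<s+1}D_n$. You split this into a reducedness fusion ($\cDRT^k_\ell \Rightarrow \ODRT^k_\ell$, via Proposition~\ref{prop:2.2}(2)) and a density fusion ($\ODRT^k_\ell \Rightarrow \BaDRT^k_\ell$, via Lemma~\ref{stringtransform}). The reducedness fusion is a genuine simplification: since you only handle $(k-1)$-coarsenings $\eta$ of the committed segment and each witness $\eta\cat(k-1)^\omega$ already lies in some $O_{i(\eta)}$, taking block $j$ to be a consecutive interval of length $\max_\eta b_\eta$ suffices; any $x\in(q)^k$ with $x\restrict\mu^x(k-1)=\eta$ then colors the entire $j$-th interval with $k-1$ and lands in $[\eta\cat(k-1)^{b_\eta}]\subseteq O_{i(\eta)}$, with no uncollapsing needed. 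That half, the easy arrows, and the transport by Lemma~\ref{stringtransform} are all correct.

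The density fusion, though, is the paper's construction with the color tracking deleted, and you leave it at the level of a sketch exactly where the paper works hardest. The interval trick cannot apply there: after extending a $k$-coarsening $\sigma_j\circ\tau$ by density to $\delta$ with $[\delta]\subseteq\cap_{n<t}D_n$, the new positions of $\delta$ take arbitrary values in $\{0,\ldots,k-1\}$, and one must pull $\delta$ back to an extension $\tau'\succ\tau$ with $\sigma_j\circ\tau'=\delta$ that introduces no new blocks. This is precisely the paper's step $\tau_s^{j+1}(n)=\tau_s^j\bigl(\mu^{\delta_s^j}(\delta_s^j(n))\bigr)$, together with the non-obvious verifications that $\mu^{\delta_s^j}(\delta_s^j(n))\leq m_s$, that $\sigma_s^j\circ\tau_s^{j+1}=\delta_s^j$, and that the stage-$(t{+}1)$ uncollapsing forces every $x\in(q_0)^k$ into $D_t$. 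You correctly flag this as ``the technical heart'' but do not supply it, so there is a real gap. Moreover, a single call to $f(\cdot,t)$ from Lemma~\ref{lem:density} handles one coarsening, not all at once: one must iterate over the $M_t$ coarsenings as in the paper's inner loop, each iteration lengthening $\tau$, so the phrase ``a single further extension'' understates what must be arranged. The plan is sound; with the uncollapsing and its verification supplied, the two-fusion organization would give a valid alternative proof.
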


\begin{proof}
By setting $D_n = (\omega)^k$ in $\BaDRT^k_{\ell}$, $\ODRT^k_\ell$ is a special case of $\BaDRT^k_\ell$, and by Proposition \ref{prop:2.2}, $\cDRT^k_\ell$ 
is a special case of $\ODRT^k_\ell$.  It remains to prove in $\RCA$ that $\cDRT^k_\ell$ implies $\BaDRT^k_\ell$. 

Let $\{O_i\}_{i<\ell}$, $\{D_n\}_{n<\omega}$ be a Baire $\ell$-coloring of $(\omega)^k$ for which the open sets $O_i$ are pairwise disjoint. We construct 
a partition $y \in (\omega)^\omega$ such that $(y)^k
\subseteq \cap_n D_n$ and $\cup_i O_i$ restricted to $(y)^k$ is
reduced. By Proposition \ref{prop:2.2} and $\cDRT^k_\ell$, there is a homogeneous 
$z \in (y)^{\omega}$ for this open reduced coloring. Since $(z)^k \subseteq (y)^k \subseteq \cap_n D_n$, this  
partition $z$ is homogeneous for the original Baire coloring. 

First we describe the construction in a classical way, and then remark
on how it can be carried out in $\RCA$.  

Build $y$ by 
initial segments in stages, $y = \lim_s y_s$, starting with $y_{0}$ being the empty string,
and then continuing with stage $s=1$ as follows.  
Assume that at the start of stage $s$, 
$y_{s-1}$ is an $(s-1)$-partition.  In stage $s$ begin by letting $y_s^0 = {y_{s-1}}\concat (s-1)$, so that $y_s^0$ is an $s$-partition.
Let $x_0,\dots, x_r$ be a list of the elements of $(s)^k$.  
For each $i=0,\dots r$,
let $q = x_i \circ y_s^i$.  
Let $\tau \in (<\omega)^k$ be such that $q \preceq \tau$ and $\tau$
meets $\cap_{n\leq s} D_n$ and $\cup_{i<\ell} O_i$. 
Then extend $y_s^i$ to $y_s^{i+1}$ in such a way that 
$x_i \circ y_s^{i+1} =  \tau$.  In general there is more than one way to do this, 
but which way does not matter.  For concreteness, for each $n\geq|y_s^i|$ we could 
set $y_s^{i+1}(n)$ to be the least $m$ such that $x_i(m) = \tau(n)$.
At the conclusion of these substages we are 
left with $y_s^{r+1}$.   Let $y_s = y_s^{r+1}$.  This completes the construction of $y$.

We need to justify why this construction can be carried out in $\RCA$.
To that end, we make the following claims in $\RCA$:
\begin{enumerate}
\item For any $q \in (<\omega)^k$ and $s$, there is 
an extension $\tau \succeq q$ which meets $\cup_{i<\ell} O_i$ 
and $\cap_{n\leq s} D_n$.
To see that for all $s$, such a $\tau$ exists, apply $\Sigma^0_1$ induction.
\item There is a function $f:(<\omega)^k\times \omega\rightarrow (<\omega)^k$ 
with the properties above.
This follows because in $\RCA$, we can select the $\tau$ with least witness.
\item There is a function which outputs the sequence 
$$y_1^0,\dots,y_1^{r_1},y_2^0,\dots,y_2^{r_2},y_3^0,\dots$$
This can be obtained by primitive recursion using the function $f$.
\end{enumerate}

Therefore, $y$ exists in $\RCA$.  Next we show 
that $(y)^k\subseteq \cap_n D_n$.
Let $w \in (y)^k$ and fix $n$.  Let $x \in (\omega)^k$ with $x\circ y = w$.
Let $s\geq n$ be large enough that 
$x\restrict s \in (s)^k$.  Then $x \restrict s$ was one of the ways to coarsen
considered during stage $s$ of the construction.  By construction,
$(x \restrict s) \circ y_{s}$ 
meets $D_n$.  So $x\circ y\in D_n$.

Finally, we claim that the restriction of $\cup_{i<\ell} O_i$
to $(y)^k$ is a reduced coloring.  Given
if $w_1,w_2 \in (y)^k$ with $w_1^\ast=w_2^\ast$, 
let $x_1$ and $x_2$
be such that $x_1\circ y = w_1$ and $x_2\circ y = w_2$.  
Then $x_1^\ast = x_2^\ast$.
Let $x=(x_1^\ast)\concat(k-1)$
and let $s = |x|$.  Then $x \in (s)^k$ and $x$ was considered 
at stage $s$ of the construction.  By construction,
$x \circ y_s$ meets $O_i$ for some $i$.  Since $x \circ y_s$
is an initial segment of both $w_1$ and $w_2$, it follows that 
$w_1$ and $w_2$ are both in $O_i$.  Finally, as $w_1,w_2 \in \cap_n D_n$,
we have $w_1,w_2 \in C_i$, as needed.
\end{proof}

Since $\ODRT^{k+1}_\ell$ implies $\RT^k_\ell$ over $\RCA$ \cite{ms}, we have the following corollary.

\begin{cor}[$\RCA$]
\label{cor:2.6}
$\cDRT^{k+1}_\ell$ implies $\RT^k_\ell$.
\end{cor}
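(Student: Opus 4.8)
The plan is to obtain this as an immediate consequence of Theorem~\ref{thm:2.4} together with the Miller--Solomon result, rather than to reprove anything from scratch. By Theorem~\ref{thm:2.4}, $\RCA$ proves that $\cDRT^{k+1}_\ell$ implies $\ODRT^{k+1}_\ell$, and by \cite{ms}, $\RCA$ proves that $\ODRT^{k+1}_\ell$ implies $\RT^k_\ell$. Composing these two implications inside $\RCA$ gives $\cDRT^{k+1}_\ell \rightarrow \RT^k_\ell$, which is the corollary.

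If one prefers a self-contained argument not routing through $\ODRT^{k+1}_\ell$, I would instead unwind Proposition~\ref{prop:2.2} and build the coloring directly. Given $c\colon [\omega]^k \to \ell$, define $\bar c\colon (\omega)^k_\fin \to \ell$ by letting $\bar c(\sigma)$ be the colour $c$ assigns to the $k$-element set $\{\mu^\sigma(1),\dots,\mu^\sigma(k-1),|\sigma|\}$, i.e.\ the nonzero block-minima of $\sigma$ together with $|\sigma|$ itself (these are $k$ distinct natural numbers since every block-minimum of $\sigma$ is smaller than $|\sigma|$). The point of this definition is that for $x \in (\omega)^{k+1}$ with $m = \mu^x(k)$ one has $x\restrict m \in (\omega)^k_\fin$ and $\{\mu^x(1),\dots,\mu^x(k)\} = \{\mu^{x\restrict m}(1),\dots,\mu^{x\restrict m}(k-1),m\}$, so $\bar c(x\restrict \mu^x(k))$ records the colour of the tuple of nonzero block-minima of $x$; in particular $\bar c$ is a legitimate instance of $\cDRT^{k+1}_\ell$ (it corresponds to a reduced open colouring of $(\omega)^{k+1}$ in the sense of Proposition~\ref{prop:2.2}).

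Applying $\cDRT^{k+1}_\ell$ to $\bar c$ yields $q \in (\omega)^\omega$ and $i < \ell$ with $\bar c(x\restrict \mu^x(k)) = i$ for every $x \in (q)^{k+1}$. I would then set $H = \{\mu^q(j) : j \geq 1\}$, an infinite subset of $\omega$, and check that $c$ is constantly $i$ on $[H]^k$: given $1 \leq j_1 < \dots < j_k$, form the $(k+1)$-coarsening $r$ of $q$ whose block $0$ absorbs the $q$-blocks $B^q_0,\dots,B^q_{j_1-1}$, whose block $t$ absorbs $B^q_{j_t},\dots,B^q_{j_{t+1}-1}$ for $1 \leq t \leq k-1$, and whose block $k$ absorbs all remaining $q$-blocks. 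Then $\mu^r(t) = \mu^q(j_t)$ for $1 \leq t \leq k$, hence $c(\{\mu^q(j_1),\dots,\mu^q(j_k)\}) = \bar c(r \restrict \mu^r(k)) = i$; and every element of $[H]^k$ arises this way.

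I do not anticipate a genuine obstacle: the statement really is a corollary. The only things needing care in the self-contained version are the verification that $\bar c$ depends only on a finite initial segment (so that it is a valid instance, handled by Proposition~\ref{prop:2.2}) and that the coarsening $r$ is well defined and ordered; both are finitary and available in $\RCA$. Routing through \cite{ms} avoids even these checks and reduces the proof to the one-line composition in the first paragraph.
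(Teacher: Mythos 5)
Your first paragraph is exactly the paper's proof: Theorem~\ref{thm:2.4} gives $\cDRT^{k+1}_\ell \to \ODRT^{k+1}_\ell$ over $\RCA$, and the Miller--Solomon implication $\ODRT^{k+1}_\ell \to \RT^k_\ell$ completes it. Your optional self-contained argument is also correct and is essentially an unwinding of the cited Miller--Solomon result into the combinatorial language of $\cDRT$, but the one-line composition is all the paper does here.
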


\begin{prop}
\label{prop:2ll_computable}
For any $\ell \geq 2$, $\RCA$ proves $\cDRT^2_\ell$ and hence also $\ODRT^2_{\ell}$. 
\end{prop}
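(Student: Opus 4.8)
The plan is to observe that, once the definitions for $k=2$ are unwound, $\cDRT^2_\ell$ is essentially the infinite pigeonhole principle $\RT^1_\ell$, which is provable in $\RCA$ for each standard $\ell$.

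First I would identify $(\omega)^{1}_{\fin}$ with the positive integers. Since the alphabet for $k=1$ is a one-element set, $(\omega)^{1}_{\fin} = \{\, 0^n : n \geq 1 \,\}$, and I will write $0^n \leftrightarrow n$ throughout. Thus an instance of $\cDRT^2_\ell$ is a coloring $\{1,2,3,\dots\} = \bigcup_{i<\ell} C_i$; passing to the ``least color'' function as the paper notes is harmless in $\RCA$, we may assume it is given by $c \colon \{1,2,\dots\} \to \ell$ with $n \in C_{c(n)}$ for all $n$. Next I would analyze homogeneity. If $x \in (p)^2$ for some $p \in (\omega)^\omega$, then $x$ takes only the value $0$ before position $\mu^x(1)$, so $x \restrict \mu^x(1) = 0^{\mu^x(1)}$; writing $x = r \circ p$ with $r \in (\omega)^2$, one checks that $\mu^x(1) = \mu^p\!\big(\mu^r(1)\big)$ and that $\mu^r(1)$ can be made equal to any value $\geq 1$. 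Hence, as $x$ ranges over $(p)^2$, the string $x \restrict \mu^x(1)$ ranges exactly over $\{\, 0^{\mu^p(j)} : j \geq 1 \,\}$. Therefore $p$ is homogeneous for $c$ with color $i$ if and only if $\mu^p(j) \in C_i$ for every $j \geq 1$, i.e.\ the infinite set $\{\, \mu^p(j) : j \geq 1 \,\}$ is contained in $C_i$.

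It then remains to produce, in $\RCA$, an infinite set $A \subseteq \{1,2,\dots\}$ together with an $i < \ell$ such that $A \subseteq C_i$, and a partition $p \in (\omega)^\omega$ whose block-minima past $0$ enumerate $A$. For the latter, given $A = \{\, a_1 < a_2 < \cdots \,\}$, set $p(n) = \lvert\{\, j \geq 1 : a_j \leq n \,\}\rvert$; this is an ordered surjection $\omega \to \omega$ with $\mu^p(0) = 0$ and $\mu^p(j) = a_j$ for $j \geq 1$. For the former, I would apply $\RT^1_\ell$ to $c$, which is available in $\RCA$ because $\ell$ is a standard number: if no $c^{-1}(i)$ were infinite, then each would be finite, hence bounded, and since there are only finitely many of them they would share a common bound $N$, so that $c(N)$ would be colored by none of $0,\dots,\ell-1$, a contradiction; then take $A = c^{-1}(i)$ for the least $i$ with $c^{-1}(i)$ infinite, which is a $\Delta^0_1$ set and so exists. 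Finally, $\ODRT^2_\ell$ follows since it is a special case of $\cDRT^2_\ell$ by Proposition~\ref{prop:2.2} (or directly Theorem~\ref{thm:2.4}).

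The only steps needing care are the book-keeping in the middle paragraph — pinning down precisely which strings $x\restrict\mu^x(1)$ arise from coarsenings $x \in (p)^2$, and that every such string does arise — and the verification that the appeal to $\RT^1_\ell$ is legitimate over $\RCA$; the latter uses only that a $\Delta^0_1$ set is finite iff bounded, together with the standardness of $\ell$. There is no genuine obstacle: the whole content is that $\cDRT^2_\ell$ collapses to $\RT^1_\ell$. (Alternatively, one could first reduce to $\ell = 2$ via the preceding proposition, in which case the pigeonhole step becomes the trivial observation that if $C_0$ is bounded then $C_1$ is cofinite.)
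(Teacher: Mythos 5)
Your proof is correct and takes essentially the same approach as the paper: identify $(\omega)^1_{\fin}$ with the positive integers, apply $\RT^1_\ell$ to obtain an infinite monochromatic set, and build a partition in $(\omega)^\omega$ whose block minima (past block $0$) enumerate that set. You supply more detail — the careful verification that homogeneity of $p$ is equivalent to $\{\mu^p(j) : j \geq 1\}$ being monochromatic, the explicit $\RCA$-justification of $\RT^1_\ell$, and an interval-block construction of $p$ rather than the paper's singleton-block construction — but the argument is the same.
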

\begin{proof} 
Let $c: (<\omega)^1 \rightarrow \ell$. Since $(<\omega)^1 = \{ 0^n : n \in \omega \}$, 
$c$ can be viewed as an $\ell$-coloring of $\omega$.  By
$\RT^1_\ell$, there is a color $i$ and an infinite set $X$
such that for every $n \in X$, $c(0^n) = i$.  Let $z$ be the partition
which has a block of the form $\{n\}$ for each $n \in X$ and puts all
the other numbers in $\block{z}{0}$. Then $z$ is homogeneous for $c$. 
\end{proof}

\subsection{Connections to Hindman's theorem}
\label{subsec:Hindman}

In this section, we show that Hindman's Theorem for $\ell$-colorings implies
$\CDRT^3_\ell$.  In \cite{cs}, Simpson remarks that 
one case of the Carlson-Simpson Lemma follows from 
Hindman's Theorem.  Ludovic Patey showed us a proof, and the 
same argument gives a strong form of $\CDRT^3_\ell$.  We 
include Patey's proof here.

\begin{defn}[$\RCA$]
Let $\mathcal{P}_{\text{fin}}(\omega)$ denote the set of (codes for) all non-empty
finite subsets of $\omega$. $X \subseteq
\mathcal{P}_{\text{fin}}(\omega)$ is an \emph{IP set} if $X$ is closed
under finite unions and contains an infinite sequence of pairwise
disjoint sets. 
\end{defn}

\begin{thm}[Hindman's theorem for $\ell$-colorings]
	For every $c : \mathcal{P}_{\text{fin}}(\omega) \to \ell$
        there is an $IP$ set $X$ and a color $i < \ell$ such that
        $c(F) = i$ for all $F \in X$.
\end{thm}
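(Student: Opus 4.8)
The plan is to prove Hindman's theorem by the Galvin--Glazer idempotent ultrafilter method, carried out directly in the commutative semigroup $(\mathcal{P}_{\text{fin}}(\omega),\cup)$ so that the witnessing sets come out pairwise disjoint, as the definition of an IP set demands. Work in $\beta\mathcal{P}_{\text{fin}}(\omega)$, the space of ultrafilters on $\mathcal{P}_{\text{fin}}(\omega)$, made into a compact right-topological semigroup by the operation with $A \in p \cup q$ iff $\{F : A^{-F} \in q\} \in p$, where $A^{-F} := \{G : F \cup G \in A\}$. The one wrinkle relative to the classical argument for $(\mathbb{N},+)$ is that here $F \cup F = F$, so every principal ultrafilter is idempotent and the Ellis--Numakura lemma by itself produces nothing useful. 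To get around this I would first restrict to the closed subsemigroup $\overline{\gamma} := \{p : \gamma \subseteq p\}$, where $\gamma$ is the filter generated by the sets $B_n := \{F : \min F \ge n\}$; since $\min(F \cup G) \ge n$ iff $\min F \ge n$ and $\min G \ge n$, one checks in a line that $\overline{\gamma}$ is a nonempty closed subsemigroup, so it contains an idempotent $p = p \cup p$, and any such $p$ is nonprincipal because $\bigcap_n B_n = \emptyset$.

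Given $c : \mathcal{P}_{\text{fin}}(\omega) \to \ell$, exactly one color class $A := c^{-1}(i)$ belongs to $p$, and I would extract the IP set by the usual recursion on $A$. For any $A \in p$ set $A^\star := \{F \in A : A^{-F} \in p\}$; idempotence gives $A^\star \in p$ whenever $A \in p$. I maintain sets $H_s \in p$ together with the invariant that every $G \in H_s$ satisfies $G \in A$, $\min G > \max F_{s-1}$, and $\bigl(\bigcup_{j \in E} F_j\bigr) \cup G \in A$ for every nonempty $E \subseteq \{0,\dots,s-1\}$. Starting from $H_0 := A$, at stage $s$ I pick $F_s \in H_s^\star$ (legitimate since $H_s^\star \in p$) and set $H_{s+1} := H_s \cap H_s^{-F_s} \cap B_{\max F_s + 1}$, which again lies in $p$ (the last factor because $p \supseteq \gamma$), and a short bookkeeping argument splitting on whether $s+1 \in E$ shows it inherits the invariant with $s$ replaced by $s+1$. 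The resulting sets $F_0, F_1, \dots$ are pairwise disjoint because $\min F_{s+1} > \max F_s$.

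The conclusion then falls out: for a nonempty finite $E \subseteq \omega$ with $s = \max E$, either $E = \{s\}$ and $F_s \in H_s \subseteq A$, or $E \setminus \{s\}$ is a nonempty subset of $\{0,\dots,s-1\}$ and applying the invariant of $H_s$ to $G = F_s$ yields $\bigcup_{j \in E} F_j \in A$; hence $X := \bigl\{\bigcup_{j \in E} F_j : E \subseteq \omega \text{ finite nonempty}\bigr\} \subseteq A = c^{-1}(i)$. Since $X$ is visibly closed under finite unions and contains the infinite pairwise disjoint sequence $(F_s)_{s \in \omega}$, it is an IP set monochromatic in color $i$, as required. The only genuinely nonconstructive ingredient --- and the sole obstacle worth flagging --- is the existence of the nonprincipal idempotent $p$, which uses the axiom of choice via Ellis--Numakura; this proof is therefore unavailable in subsystems such as $\RCA$, but Hindman's theorem is provable in $\mathsf{ACA}^+_0$ by Blass, Hirst and Simpson \cite{bhs}, and it is that formalization on which the applications in this section rely.
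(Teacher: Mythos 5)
The paper does not actually prove Hindman's theorem at all: the theorem is recalled as a classical result (and its provability in $\ACA^+$ is cited to Blass, Hirst, and Simpson \cite{bhs}), and the surrounding work in Section~\ref{subsec:Hindman} uses it as a black box to derive $\cDRT^3_\ell$ via a Mathias-style forcing construction. So you are supplying a proof where the paper supplies none.

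Your Galvin--Glazer argument carried out directly in the semigroup $(\mathcal{P}_{\text{fin}}(\omega),\cup)$ is correct, and the one genuine subtlety of the adaptation is handled properly: unlike $(\mathbb{N},+)$, here $F\cup F=F$, so every principal ultrafilter is idempotent and Ellis--Numakura alone gives nothing; your fix of passing to the closed subsemigroup $\overline{\gamma}$ determined by the tails $B_n=\{F:\min F\geq n\}$ is exactly right, since $B_n^{-F}=B_n$ when $\min F\geq n$ and $B_n^{-F}=\emptyset$ otherwise, whence $\overline{\gamma}$ is a nonempty closed subsemigroup, and any idempotent there is nonprincipal because $\bigcap_n B_n=\emptyset$. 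The recursion with $H_{s+1}=H_s\cap H_s^{-F_s}\cap B_{\max F_s+1}$ preserves membership in $p$ and the stated invariant; the only slip is cosmetic --- the case split in the inductive step should be on whether $s\in E$ rather than $s+1\in E$, since the invariant for $H_{s+1}$ ranges over nonempty $E\subseteq\{0,\dots,s\}$ (in the case $s\in E$ with $E'=E\setminus\{s\}$ nonempty, one uses $F_s\cup G\in H_s$ together with the $H_s$-invariant applied to $E'$). You also correctly flag that this proof relies on choice and is not the formalization the paper leans on; the paper needs only that $\HT$ is provable in $\ACA^+$, which is the cited $\ACA^+_0$ result, not the ultrafilter argument.
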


\begin{thm}[essentially Patey \cite{patey}, see also \mbox{\cite[page 268]{cs}}]
\label{prop:hindman} Over $\RCA$,
	Hindman's theorem for $\ell$-colorings implies
        $\cDRT^3_\ell$. In particular, $\cDRT^3_\ell$ is provable in
        $\ACA^+$.
\end{thm}

\begin{proof} Hindman's Theorem follows from $\ACA^+$ by \cite{bhs},
so it suffices to prove the first statement.
Fix $\ell \geq 2$ and assume Hindman's Theorem for $\ell$-colorings. Since Hindman's Theorem for $2$-colorings implies $\ACA$, we reason in $\ACA$. 
By Proposition \ref{prop:2.2}, it suffices to fix an open reduced coloring $(\omega)^3 = \cup_{i < \ell} O_i$ and produce 
$p \in (\omega)^\omega$ and $i < \ell$ such that for all $x \in (p)^3$, $x \in O_i$. We write the coloring as 
$c : (\omega)^3 \to \ell$ with the understanding that $c(x)=i$ is shorthand for $x \in O_i$. 

For a nonempty finite set $F \subseteq \omega$ with $0 \notin F$ and a number $n > \max F$, we let $x_{F,n} \in (\omega)^3$ be the following partition.
	\[
		x_{F,n}(k) =
		\begin{cases}
			0 & \text{if } k \notin F \text{ and } k \neq n\\
			1 & \text{if } k \in F\\
			2 & \text{if } k = n
		\end{cases}
	\]
Thus, the blocks are $\omega - (F \cup \{ n \})$, $F$ and $\{ n \}$. Note that we can determine the color $c(x_{F,n})$ as a function of $F$ and 
$n$ and that since $c$ is reduced, if $x \in (\omega)^3$ and $x \restrict \mu^x(2) = x_{F,n} \restrict n$, then 
$c(x) = c(x_{F,n})$.  
	
The remainder of the proof is most naturally presented as a forcing construction. After giving a classical description of this construction, we indicate how 
to carry out the construction in $\ACA$. The forcing conditions are pairs
$(F,I)$ such that
	\begin{itemize}
		\item $F$ is a non-empty finite set such that $0 \notin F$,
		\item $I$ is an infinite set such that $\max F < \min I$, and
		\item for every nonempty subset $U$ of $F$ there is an
                  $i < \ell$ such that $c(x_{U,n}) = i$ for all $n \in
                  F \cup I$ with $\max U < n$.
	\end{itemize}
Extension of conditions is defined as for Mathias forcing:
$(\widehat{F},\widehat{I}) \leq (F,I)$ if $F \subseteq \widehat{F}
\subseteq F \cup I$ and $\widehat{I} \subseteq I$.
	
By the pigeonhole principle, there is an $i < l$ such that
$c(x_{\{1\},n}) = i$ for infinitely many $n > 1$. For any such
$i$, the pair $(\{1\}, \{n \in \omega : n > 1 \text{ and }
c(x_{\{1\},n}) = i\})$ is a condition. More generally, given a
condition $(F,I)$ there is an infinite set $\widehat{I} \subseteq I$
such that $(F \cup \{\min I\}, \widehat{I})$ is also a condition. To
see this, let $U_0,\ldots,U_{s-1}$ be the nonempty subsets of $F
\cup \{\min I\}$ containing $\min I$. By arithmetic induction, for each positive $k \leq s$, there exist colors
$i_0,\ldots,i_{k-1} < \ell$ such that there are infinitely many $n \in
I$ with $c(x_{U_j,n}) = i_j$ for all $j < k$. (If not, fix the least
$k$ for which the fact fails, and apply the pigeonhole principle to
obtain a contradiction.) Let $i_0,\ldots,i_{s-1}$ be the colors
corresponding to $k = s$ and let $\widehat{I}$
be the infinite set $\{n \in I : \forall j < s~(c(x_{U_j,n}) =
i_j)\}$.

Fix a sequence of conditions $(F_1,I_1) > (F_2,I_2) > \cdots$ 
with $|F_k| = k$ and let $G = \bigcup_k F_k$. To complete the proof, we use 
$G$ to define a coloring $d: \mathcal{P}_{\fin}(\omega) \rightarrow \ell$ to which we can 
apply Hindman's Theorem. However, first we indicate why we can form $G$ in $\ACA$. 

The conditions $(F,I)$ used to form $G$ can be specified by the finite set $F$, the number $m = \min I$ and the finite sequence 
$\delta \in \ell^{M}$ where $M = 2^{|F|}-1$ such that if $F_0, \ldots, F_{M-1}$ is a canonical listing of the nonempty subsets of $F$, then 
$I = \{ n \geq m : \forall j < M \,  (c(x_{F_j,n}) = \delta(j)) \}$. The extension procedure above can be captured by an 
arithmetically definable function $f(F,m,\delta) = \langle F \cup \{m \}, m', \delta' \rangle$ where $F \cup \{ m \}$, $m'$ and $\delta'$ describe 
the extension $(F \cup \{ m \}, \widehat{I})$. Because the properties of this extension where verified using arithmetic 
induction and the pigeonhole principle, both of which are available in $\ACA$, we can define $f$ in $\ACA$ and form 
a sequence of conditions $(F_1, m_1, \delta_1) > (F_2, m_2, \delta_2) > \cdots$ giving $G = \bigcup_k F_k$. 

It remains to use $G = \{ g_0 < g_1 < \cdots \}$ to complete the proof. By construction, for each non-empty finite subset $U$ of $G$, there
is color $i_U < \ell$ such that $c(x_{U,n}) = i_U$ for all $n \in G$
with $n > \max U$. Define $d :
\mathcal{P}_{\text{fin}}(\omega) \to \ell$ by $d(F) = i_{\{g_m : m \in
  F\}}$. We apply Hindman's theorem to $d$ to
obtain an IP set $X$ and a color $i < \ell$. Since $X$ contains an
infinite sequence of pairwise disjoint members, we can find a sequence
$E_1,E_2,\ldots$ of members of $X$ such that $\max E_k < \min E_{k+1}$. Define $p \in (\omega)^\omega$ to be the partition whose
blocks are $\block{p}{0} = \omega - \bigcup_k \{g_m : m \in E_k\}$ and, for each $k \geq 1$,
$\block{p}{k} = \{g_m : m \in E_k\}$. Note that for all $k \geq 1$, 
\[
\max \block{p}{k} = \max \{g_m : m \in E_k\} < \min \block{p}{k+1} = \min \{g_m : m \in E_{k+1}\}.
\]
	
It remains to verify that $p$ and $i$ have the desired
properties. Consider any $x \in (p)^3$; we must show that $c(x) =
i$. Let $U = \block{x}{1} \cap \mu^x(2)$ and let $n = \mu^x(2)$. 
Then $n = \mu^{x_{U,n}}(2)$ and $x \restrict n = x_{U,n}
\restrict n$, so since $c$ is reduced, $c(x) = c(x_{U,n})$. Therefore, it
suffices to show $c(x_{U,n}) = i$.

We claim $U$ is a finite union of $p$-blocks. Because $x$ is a coarsening of $p$, 
$\block{x}{1}$ is a (possibly infinite) union of $p$-blocks $\block{p}{j_1} \cup \block{p}{j_2} \cup \cdots$ 
with $0 < j_1 < j_2 < \cdots$ and $n = \mu^x(2) = \min \block{x}{2} = \min \block{p}{b}$ for some $b \geq 2$. 
Let $j_a$ be the largest index such that $j_a < b$. 
Since the $p$-blocks are finite and increasing, $U = \block{x}{1} \cap \mu^x(2) = \block{p}{j_1} \cup \cdots \cup \block{p}{j_a}$. 
Note that $n \in G$ (because $\block{p}{b} \neq \block{p}{0}$) and $\max U < n$. 

It follows that $U = \{ g_m \mid m \in F \}$ where $F = E_{j_1} \cup \cdots \cup E_{j_a}$. Since our fixed IP set $X$ is 
closed under finite unions, $F \in X$ and therefore $d(F)=i$. By the definition of $d$, 
$d(F) = i_{\{g_m \mid m \in F\}} = i_U$, so $i = i_U$. Finally, $U$ is a finite subset of $G$, $n \in G$ and 
$\max U < n$, so $c(x_{U,n}) = i_U = i$ as required. 
\end{proof}

Observe that this proof of $\CDRT^3_\ell$ from $\HT$ produces a homogeneous $p$ with a special property: 
$\max \block{p}{i} < \min \block{p}{i+1}$ for all $i>0$.
We show that this strengthened ``ordered finite block'' version of $\CDRT^3_\ell$ is equivalent to $\HT$.
However, there is no finite block version of $\CDRT^k_\ell$ for $k>3$.

\begin{prop}[$\RCA$]
\label{prop:2.14}
If for every $\ell$-coloring of $(<\omega)^2$ there is an infinite homogeneous partition $p$ with
$\max \block{p}{i} < \min \block{p}{i+1}$ for all $i>0$, then Hindman's Theorem for $\ell$-colorings holds.
\end{prop}

\begin{proof}
Given $c: \mathcal{P}_{\text{fin}}(\omega) \rightarrow \ell$, define $\widehat{c}: (<\omega)^2 \rightarrow \ell$ by 
$\widehat{c}(\sigma) = c(\{i<|\sigma| : \sigma(i) = 1\})$.  Let $p$ be a homogeneous partition for $\widehat{c}$ with $\max \block{p}{i} < \min \block{p}{i+1}$ for all $i>0$.  
The set of all finite unions of the blocks $\block{p}{i}$ for $i>0$ satisfies the conclusion of Hindman's Theorem.
\end{proof}

\begin{prop}
\label{prop:2.13}
There is a 2-coloring of $(<\omega)^3$ such that any infinite homogeneous partition $p$ has $\block{p}{i}$ infinite for all $i > 0$.
\end{prop}

\begin{proof}
For $\sigma \in (<\omega)^3$, set $c(\sigma) = 1$ if $\sigma$ contains more 1's than 2's and set $c(\sigma)= 0$ 
otherwise. Let $p$ be homogeneous for this coloring.  Suppose for 
contradiction that $i > 0$ is such that $\block{p}{i}$ is finite. Let $N = i + 2 + |\block{p}{i}|$ and let $x = w\circ p$ where 
$$w(n) = \begin{cases} 1 &\text{ if } n=i\\
2 & \text{ if } i < n \leq N\\
3 & \text{ if } n = N+1\\
0 & \text{ otherwise}\end{cases}$$
Since $x^\ast$ has more 2's than 1's, 
$c(x^\ast) = 0$. Now coarsen in a different way: let $h \in [i+1,N]$ 
be chosen so that the size of $\block{p}{h}\cap [0,\mu^x(3)]$ is minimized. 
Let $y= z \circ p$ where
$$z(n) = \begin{cases} 1 & \text{ if } i\leq n \leq N \text{ and } n \neq h\\
                                    2 & \text{ if } n=h\\
                                    3 & \text{ if } n = N+1\\
                                    0 & \text{ otherwise.}\end{cases}$$
Since at least one $p$-block has moved from $\block{x}{2}$ to $\block{y}{1}$ and since $\block{y}{2}$ contains only the smallest $p$-block from $\block{x}{2}$, $c(y^\ast) = 1$.  So $p$ was not homogeneous.
\end{proof}

\subsection{$\cDRT$ and the Carlson-Simpson Lemma}\label{sec:2.3}

The Carlson-Simpson Lemma is the main technical tool in the original proof of the Borel version of the 
Dual Ramsey Theorem. The principle is usually stated in the framework of variable
words, but it can also be understood as a special case of the Combinatorial Dual Ramsey Theorem.  

\begin{carlsonsimpsonlemma}[$\CSL(m,\ell)$]
For every coloring $c:(<\omega)^{m} \rightarrow \ell$, there is a
partition $p\in(\omega)^\omega$ and a color $i$ such that for all $x
\in (p)^{m+1}$, if $\block{p}{j} \subseteq \block{x}{j}$ for each $j<m$, then 
$c(x^\ast) = i$.
\end{carlsonsimpsonlemma}

The condition $\block{p}{j} \subseteq \block{x}{j}$ for $j < m$ captures those $x \in (p)^{m+1}$ which keep the first $m$ many blocks of $p$ distinct in $x$. Therefore, 
$\CSL(m,\ell)$ is a special case of $\CDRT^{m+1}_{\ell}$.
Two related principles, 
$\OVW(m,\ell)$ and $\VW(m,\ell)$ have also been studied 
(see \cite{ms, e, LiuMoninPatey}).  We do not deal with 
these principles, but it may be useful to note that $\VW(m,\ell)$ 
is the strengthening of $\CSL(m,\ell)$ which requires 
each nonzero block $\block{p}{j}$ to be finite, and $\OVW(m,\ell)$ 
is the further strengthening which requires 
$\max \block{p}{j} < \min \block{p}{j+1}$ for all $j>0$.

In Proposition \ref{prop:csl_versions}, we give three equivalent versions of the Carlson-Simpson Lemma. 
The version in Proposition \ref{prop:csl_versions}(2) is (up to minor notational changes which are easily translated in $\RCA$) the statement from Lemma 2.4 of
Carlson and Simpson \cite{cs}. 

\begin{prop}[$\RCA$]
\label{prop:csl_versions}
The following are equivalent. 
\begin{enumerate}
\item[(1)] $\CSL(m,\ell)$.
\item[(2)] For each coloring $c:(<\omega)^m \rightarrow \ell$, there is a partition $p\in(\omega)^\omega$ and a color $i$ such that for all $j<m$, $p(j)=j$ and 
for all $x \in (p)^{m+1}$, if $\block{p}{j} \subseteq \block{x}{j}$ for each $j<m$, then $c(x^\ast)=i$.
\item[(3)] For each $y\in(\omega)^\omega$ and open reduced coloring $(y)^{m+1} = \cup_{i<\ell} O_i$, there is a partition $p \in (y)^\omega$ and a color $i$ such that 
for all $j < m$, $\block{y}{j}\subseteq \block{p}{j}$ and for all $x \in (p)^{m+1}$, if $\block{p}{j} \subseteq \block{x}{j}$ for each $j<m$, then $x \in O_i$.
\end{enumerate}
\end{prop}

\begin{proof}
(2) implies (1) because $\CSL(m,\ell)$ is a special case of (2). The extra condition in (2) that $p(j)=j$ for $j < m$ says that the partition $p$ does not collapse any 
of the first $m$-many blocks of the trivial partition defined by the identity function. The equivalence between (2) and (3) is proved in a similar way to Proposition \ref{prop:2.2}. 

It remains to prove (1) implies (2). Fix an $\ell$-coloring $c: (<\omega)^m \rightarrow \ell$. 
Define $\tilde{c}:(<\omega)^m \rightarrow \ell$ by $\tilde{c}(\sigma) = c(0^{\smallfrown}1^{\smallfrown}\cdots^{\smallfrown}(m-1)^{\smallfrown}\sigma)$. 
Apply $\CSL(m,\ell)$ to $\tilde{c}$ to get $\tilde{p} \in (\omega)^{\omega}$ and $i < \ell$ such that for all $\tilde{x} \in (\tilde{p})^{m+1}$, if 
$\block{\tilde{p}}{j} \subseteq \block{\tilde{x}}{j}$ for all $j<m$, then $\tilde{c}(\tilde{x}^\ast) = i$. 

Let $p \in (\omega)^{\omega}$ be the partition defined by
$$p(j) = \begin{cases} j & \text{ if } j< m \\ \tilde{p}(j-m) &\text{ if } j\geq m\end{cases}.$$
We claim that $p$ satisfies the conditions in (2) for the coloring $c$ with the fixed color $i$.
Fix $x \in (p)^{m+1}$ such that $\block{p}{j} \subseteq \block{x}{j}$ for all $j < m$. We need to show that $c(x^\ast) = i$. 
Since $x$ does not collapse any of the first $m$-many $p$-blocks, 
$x(j)=j$ for all $j < m$.  Define $\tilde x \in (\tilde p)^{m+1}$ by $\tilde x(j) = x(j+m)$.
Then $\block{\tilde{p}}{j} \subseteq 
\block{\tilde{x}}{j}$ for all $j < m$. Therefore, $\tilde{c}(\tilde{x}^\ast) = i$. 
Now, $x^\ast = 0\concat 1 \concat \dots \concat (m-1)\concat \tilde{x}^\ast$.
Therefore, $c(x^\ast) = \tilde{c}(\tilde{x}^\ast) = i$,
as required to complete the proof that (1) implies (2). 
\end{proof}

Let $y \in (\omega)^{\omega}$ and $(y)^k = \cup_{i < \ell} C_i$ be an $m$-reduced coloring for some $1 < m < k$. We define the \textit{induced coloring} 
$(y)^{m+1} = \cup_{i < \ell} \widehat{C}_i$ as follows. For $\widehat{q} \in (y)^{m+1}$, $\widehat{q} \in \widehat{C}_i$ if and only if $q \in C_i$ for some 
(or equivalently all) $q \in (y)^k$ such that $\widehat{q} \upharpoonright \mu^{\widehat{q}}(m) = q \upharpoonright \mu^q(m)$. 
This induced coloring is a reduced coloring of $(y)^{m+1}$ and therefore we 
can apply $\CSL(m,\ell)$ to it. 

Our proof of $\CDRT^k_\ell$ from the Carlson-Simpson Lemma will use repeated applications of the following lemma, which is proved using $\omega$ many nested 
applications of $\CSL(m,\ell)$. 

\begin{lem}\label{lem:2.10}
Fix $1<m<k$ and $y \in (\omega)^\omega$.  Let $(y)^k = \cup_{i < \ell} C_i$ be an 
$m$-reduced coloring.  There is an $x \in (y)^\omega$ such
that the coloring restricted to $(x)^k$ is $(m-1)$-reduced.
\end{lem}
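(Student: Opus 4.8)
The plan is to build $x$ by an $\omega$-length nested sequence of applications of $\CSL(m,\ell)$, where each application ``merges'' one more of the blocks $B_{m-1},B_m,\dots$ so that in the limit the coloring becomes insensitive to the location of the $(m-1)$st block. Recall that $m$-reduced means the color of $p\in(z)^k$ depends only on $p\restriction\mu^p(m)$, i.e.\ on the first $m$ blocks; we want $(m-1)$-reduced, meaning it depends only on $p\restriction\mu^p(m-1)$, i.e.\ only the first $m-1$ blocks. The obstruction to being $(m-1)$-reduced is precisely that two coarsenings agreeing through block $m-2$ but differing in how they split the ``tail'' into blocks $m-1,\dots,k-1$ might get different colors. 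Since the coloring is already $m$-reduced, the only relevant datum beyond the first $m-1$ blocks is which $y$-blocks land in block $m-1$ (the rest are lumped together), so morally we need to homogenize over the choice of ``where block $m-1$ ends.''

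First I would set up the iteration. Start with $y_0=y$ and the given $m$-reduced coloring of $(y_0)^k$. Given $y_n\in(y)^\omega$ with an $m$-reduced coloring of $(y_n)^k$, I apply $\CSL(m,\ell)$ (via the induced coloring, using Lemma~\ref{lem:m-reduced} and Proposition~\ref{prop:csl_versions}(3)) arranged so that the partition $y_{n+1}\in(y_n)^\omega$ has its first $n$ blocks equal to the first $n$ blocks of $y_n$ (achievable by first fixing a finite initial segment, or by the $B_a^{y}\subseteq B_a^p$ clause in part (3) applied after renaming), and so that across all coarsenings of $y_{n+1}$ keeping the first $m$ blocks of $y_{n+1}$ distinct, the color is constant equal to some $i_n$. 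The point of preserving more and more initial blocks is that the sequence $y_0,y_1,\dots$ stabilizes on every coordinate, so $x:=\lim_n y_n$ is a well-defined element of $(y)^\omega$, and each $x$-block is a block of some $y_n$.

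Next I would verify $x$ works. Take $p\in(x)^k$; I must show its color depends only on $p\restriction\mu^p(m-1)$. Since the coloring is $m$-reduced, the color depends only on $p\restriction\mu^p(m)$, equivalently on the first $m$ $x$-blocks $B_0^p,\dots,B_{m-1}^p$ (with the tail lumped). Write the first $m-1$ $x$-blocks as unions of $y_n$-blocks for $n$ large enough that all of them, plus the start of block $m-1$, have appeared; by the stabilization, there is a single $y_n$ all of whose first-many blocks refine $B_0^p,\dots,B_{m-2}^p$ and such that $B_{m-1}^p$ starts at a $y_n$-block boundary. Now the key move: given two coarsenings $p,p'\in(x)^k$ agreeing on $B_0^p=B_0^{p'},\dots,B_{m-2}^p=B_{m-2}^{p'}$, I compare each to the coarsening $q$ obtained by collapsing everything from block $m-1$ onward into a single block (so $q\in(x)^m$, but really I take the corresponding element of $(y_n)^{m+1}$ or reinflate as needed to invoke the $\CSL$ conclusion). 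The homogeneity of the $n$th step forces the color of $p$ to equal $i_n$ — because $p$, viewed as a coarsening of $y_n$, keeps the first $m$ $y_n$-blocks distinct (after possibly passing to a further stage whose first $m$ blocks are the relevant ones); and likewise the color of $p'$ equals $i_n$. Hence $p$ and $p'$ get the same color, and in fact the common color is determined by $p\restriction\mu^p(m-1)$ via the index $n$ and the data of the first $m-1$ blocks, so the restricted coloring is $(m-1)$-reduced. (One records the common color as a function of $p\restriction\mu^p(m-1)$ by, for each such initial segment, taking the stage $n$ at which enough blocks have appeared and reading off $i_n$; $\RCA$ handles this bookkeeping since the sequence $\langle y_n,i_n\rangle$ is formed by primitive recursion.)

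The main obstacle I expect is the bookkeeping to align the ``keep the first $m$ blocks distinct'' hypothesis of the $\CSL$ conclusion at stage $n$ with an arbitrary $p\in(x)^k$: $p$ need not keep the first $m$ $x$-blocks distinct as blocks of $y_n$ for the particular $n$ I chose, so I must be careful to pass to a large enough stage and to phrase the iteration so that ``the first $m$ blocks of $y_{n+1}$'' are exactly the aggregates that $p$ is allowed to vary over. Getting the indices to line up — ensuring that for every $p\in(x)^k$ there is a stage $n$ at which $p$ is (the reinflation of) a coarsening of $y_n$ separating the first $m$ $y_n$-blocks, with the $(m-1)$st of these being precisely $B_{m-1}^p$ — is the delicate part; once that is set up correctly, applying Lemma~\ref{lem:m-reduced} at that stage closes the argument. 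Everything is arithmetical and the recursion has length $\omega$, so the non-constructive content is exactly $\omega$ applications of $\CSL(m,\ell)$, as claimed.
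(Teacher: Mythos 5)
Your proposal has the right general shape, matching the paper's: an $\omega$-length nested chain of applications of $\CSL(m,\ell)$ with a well-defined limit $x$. But the iteration as you describe it does not establish $(m-1)$-reducedness, and the gap sits exactly where you flag ``the delicate part.'' At each stage $n$ you apply $\CSL(m,\ell)$ to the induced coloring of $(y_n)^{m+1}$ and conclude that coarsenings $q$ of $y_{n+1}$ with $B_a^{y_{n+1}} \subseteq B_a^q$ for $a < m$ all get color $i_n$. Since your iteration only freezes more and more initial blocks, for large $n$ one has $B_a^{y_{n+1}} = B_a^x$ for $a<m$; so the stage-$n$ conclusion applies only to $p \in (x)^k$ with $B_a^x \subseteq B_a^p$ for every $a<m$, which forces $\mu^p(m-1)=\mu^x(m-1)$ and forces the leading $p$-blocks to begin with the leading $x$-blocks in order. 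A generic $p \in (x)^k$ violates this: $B_{m-1}^p$ can begin at $\mu^x(s-1)$ for any $s\geq m$, and the first $s$ $x$-blocks can be partitioned into the first $m$ $p$-blocks in any of $M_s$ many ways. Passing to a later stage does not help, since later stages only coarsen the tail and leave $B_0^x,\dots,B_{m-1}^x$ untouched; your ``reinflate / after renaming'' asides gesture at the fix but never execute it.

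The paper's proof closes this with an explicit double loop. At stage $s$ it enumerates \emph{all} strings $\sigma_{s,j}\in(\omega)^m_{\fin}$ of length $s$ (i.e.\ all ways to collapse the first $s$ blocks to $m$ blocks), and for each $j$ it first collapses via $\sigma_{s,j}^*$, applies $\CSL(m,\ell)$ to the reduced coloring induced on the $(m{+}1)$-coarsenings of the collapsed partition, and then uncollapses---expanding the first $m$ blocks back to $s$ separate blocks---to obtain $x_s^{j+1}$. In the verification, given $p\in(x)^k$, the pair $(s,j)$ with $\mu^x(s-1)=\mu^p(m-1)$ and $\sigma_{s,j}(a)=b\iff B_a^x\subseteq B_b^p$ is determined by $p\restriction\mu^p(m-1)$, and one checks that $p$ is a coarsening of $z_s^j$ keeping the first $m$ $z_s^j$-blocks distinct, so Lemma~\ref{lem:m-reduced} yields $p\in C_{i_s^j}$. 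Without that collapse-apply-uncollapse step, enumerated over all $(s,j)$, your chain never assigns a color to coarsenings whose leading block structure differs from $x$'s, so the $(m-1)$-reduced conclusion is not reached.
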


\begin{proof}
Fix an $m$-reduced coloring $(y)^{k} = \cup_{i < \ell} C_i$. We define a sequence of infinite partitions $x_m, x_{m+1}, \cdots$ starting with index $m$ 
such that $x_m = y$ and $x_{s+1}$ is a coarsening of $x_s$ for which $\block{x_s}{j} \subseteq \block{x_{s+1}}{j}$ for all $j < s$. 
That is, we do not collapse any of the first $s$-many blocks of the partition $x_s$ when we coarsen it to $x_{s+1}$. This property guarantees that 
the sequence has a well-defined limit $x \in (\omega)^{\omega}$. We show this limiting partition $x$ satisfies the conclusion of the lemma. 

Assume $x_s$ has been defined for a fixed $s \geq m$ and we construct $x_{s+1}$. 
Set $x_s^0 = x_s$. Let $\sigma_0,\dots,\sigma_r$ be a list of the elements of 
$(s)^m$.  We define a sequence of coarsenings $x_s^1, \ldots, x_s^r$ and set $x_{s+1} = x_s^r$. 

Assume that $x_s^j$ has been defined.  Define
$$\sigma_j^+(n) = \begin{cases} \sigma_j(n) & \text{ if } n < s \\ m + (n-s) & \text{ if } n \geq s\end{cases},$$
and let $w_s^j = \sigma_j^+ \circ x_s^j$.  That is $w_s^j$ collapses 
the first $s$-many blocks of $x_s^j$ into 
$m$-many blocks in the $j$-th possible way and leaves the remaining blocks of 
$x_s^j$ unchanged. Since $w_s^j$ is a coarsening of $y$, the coloring 
$\cup_{i < \ell} C_i$ is also an $m$-reduced coloring of $(w_s^j)^k$.
Let $(w_s^j)^{m+1} = \cup_{i<\ell} \widehat C_i$
be the induced coloring.  This coloring is reduced, so
let $i_s^j<\ell$ and $z_s^j \in (w_s^j)^\omega$ 
be the result of applying $\CSL(m,l)$ as stated in 
Proposition \ref{prop:csl_versions}(3).  Then $z_s^j$ 
leaves the first $m$ blocks of $w_s^j$ separate, 
and any coarsening of $z_s^j$ into at least $m+1$ pieces
receives color $i_s^j$, provided the first $m$ blocks 
are left separate.

To define $x_s^{j+1}$, we want to ``uncollapse'' the first $m$-many blocks of $z_s^j$ to reverse the action of $\sigma_{j}^+$ in defining $w_s^j$. Since 
$w_s^j$ collapsed the first $s$-blocks of $x_s^j$ to $m$-many blocks and since $z_s^j$ is a coarsening of $w_s^j$, if $x_s^j(u) < s$, then $z_s^j(u) < m$. 
We define $x_s^{j+1}$ by cases as follows.
\begin{enumerate}
\item[(1)] If $x_s^j(u) < s$, then $x_s^{j+1}(u) = x_s^j(u)$. 
\item[(2)] If $x_s^j(u) \geq s$ and $z_s^j(u) = a < m$, then $x_s^{j+1}(u) = x_s^j(\mu^{z_s^j}(a))$.
\item[(3)] If $z_s^j(u) \geq m$, then $x_s^{j+1}(u) = z_s^j(u)+(s-m)$.
\end{enumerate}
Below we verify that $x_s^{j+1}$ is an infinite partition coarsening $x_s^j$ which does not collapse any of the first $s$-many blocks of $x_s^j$. 
This completes the construction of $x_s^{j+1}$ and hence of $x_{s+1}$ and $x$.

We verify the required properties of $x_s^{j+1}$. By (1), $\block{x_s^j}{a} \subseteq \block{x_s^{j+1}}{a}$ for all $a < s$, so we do not collapse any of the first $s$-many blocks 
of $x_s^j$ in $x_s^{j+1}$. There is no conflict between (1) and (3) because $x_s^j(u) < s$ implies $z_s^j(u) < m$. Furthermore, (3) renumbers the 
$z_s^j$-blocks starting with index $m$ to $x_s^{j+1}$-blocks starting with index $s$ without changing any of these blocks. Therefore, $x_s^{j+1}$ is an infinite partition. 

In (2), we handle the case when the $x_s^j$-block containing $u$ is not changed by $w_s^j$ (except to renumber its index) but is collapsed by 
$z_s^j$ into one of the first $m$-many $z_s^j$-blocks. In this case, $\mu^{z_s^j}(a) = \mu^{x_s^j}(b)$ for some $b < s$ and we have set $x_s^{j+1}(u) = b$. 
It is straightforward to check (as in the proof of Theorem \ref{thm:2.4}) that $x_s^{j+1}$ is a coarsening of $x_s^j$ and that $\sigma_{j}^+ \circ x_s^{j+1} = z_s^j$.

To complete the proof, we verify that the restriction of $\cup_{i < \ell} C_i$ to $(x)^k$ is
$(m-1)$-reduced.  Fix $p \in (x)^k$ and we show the color of $p$ depends only on $p\restrict \mu^p(m-1)$.

Let $q \in (\omega)^\omega$ be the unique element with
$p = q \circ x$, and let $\sigma = q \restrict \mu^q(m-1)$.
Then $ \sigma\concat (m-1) \in (s)^m$ for some $s$, and
$$p\restrict \mu^p(m-1) = \sigma \circ (x\restrict\mu^x(s-1)).$$
During stage $s$ and afterward, the first $s$ blocks of $x_s$ are 
always kept separate.  Therefore, the above equation remains true when $x$
is replaced with $x_s^{j+1}$, where $j$ is the unique index such 
that $\sigma_j = \sigma$.  Therefore, $p$ is a coarsening of 
$\sigma_j^+ \circ x_j^{s+1} = z_s^j$ and $p$ keeps the first $m$ 
blocks of $z_s^j$ separate.  
Therefore, the color of $p$ is $i_s^j$, the homogeneous color 
obtained when we applied $\CSL(m,\ell)$ to obtain $z_s^j$. 
This completes the proof that 
the restriction of $\cup_{i < \ell} C_i$ to $(x)^k$ is $(m-1)$-reduced because the indices $s$ and $j$ in $z_s^j$ are determined only by 
$p\restrict \mu^p(m-1)$.
\end{proof}

We end this section with the proof of $\CDRT^k_{\ell}$.
\begin{thm}\label{thm.3.31} For all for $k \geq 2$ and all $\ell$, $\CDRT^k_\ell$ holds.
\end{thm}
\begin{proof} For $k=2$, $\CDRT^k_{\ell}$ follows from the pigeonhole 
principle as in Proposition \ref{prop:2ll_computable}. 
Now assume $k\geq 3$.  
Consider $\CDRT^k_{\ell}$ in the form given in Proposition \ref{prop:2.2}. 
Let $y \in (\omega)^{\omega}$ 
and $(y)^k = \cup_{i < \ell} O_i$ be an open reduced coloring. These satisfy the assumptions of Lemma \ref{lem:2.10} with $m=k-1$.  After $k-2$ applications of 
Lemma \ref{lem:2.10}, we obtain $x \in (y)^{\omega}$ such that the restriction of $\cup_{i < \ell} O_i$ to $(x)^k$ is $1$-reduced and hence the color of $p \in (x)^k$ 
depends only on $p \upharpoonright \mu^p(1)$. Since the numbers $n < \mu^p(1)$ must lie in $\block{p}{0}$, the color of $p$ is determined by the value of 
$\mu^p(1)$. By the pigeonhole principle, there is an infinite set $X \subseteq \{ \mu^x(a) : a \geq 1 \}$ and a color $i$ such that for all $p \in (x)^k$, 
if $\mu^p(1) \in X$, then $p \in C_i$. It follows that for any $z \in (x)^{\omega}$ such that $\mu^z(a) \in X$ for all $a \geq 1$, 
$(z)^k \subseteq C_i$ as required. 
\end{proof}

It is interesting to note that the only non-constructive steps in this proof are the $\omega \cdot (k-2)$ nested applications of the 
Carlson-Simpson Lemma.  

\section{The Borel Dual Ramsey Theorem for $k \geq 3$}\label{sec:3}
\label{sec:Borel3}

In the next two sections we consider the Borel Dual Ramsey Theorem from the perspective of effective mathematics. We define Borel codes for topologically $\mathbf\Sigma^0_\alpha$ subsets of $(\omega)^k$ by induction on the ordinals below $\omega_1$. Let $L$ be some 
countable set of labels which effectively code for the clopen sets $\emptyset$, $(\omega)^k$ and $[\sigma]$ and $\overline{[\sigma]}$ for $\sigma \in (<\omega)^k$. 

\begin{defn} 
\label{defn:effective_borel_codes}
We define a \emph{Borel code for a $\mathbf \Sigma^0_\alpha$ or $\mathbf \Pi^0_\alpha$ set}.
\begin{itemize}
\item A \emph{Borel code for a $\mathbf \Sigma^0_0$ or a $\mathbf \Pi^0_0$ set} is a labeled tree $T$ consisting of just a root $\lambda$ in which the root is 
labeled by a clopen set from $L$. The Borel code represents that clopen set. 
\item For $\alpha \geq 1$, a \emph{Borel code for a $\mathbf\Sigma^0_{\alpha}$ set} is a labeled 
tree with a root labeled by $\cup$ and attached subtrees at level 1, each of which is 
a Borel code for a $\mathbf\Sigma^0_{\beta_n}$ or $\mathbf \Pi^0_{\beta_n}$ set $A_n$ for some 
$\beta_n<\alpha$.  The code represents the set $\cup_n A_n$.
\item For $\alpha \geq 1$, a \emph{Borel code for a $\mathbf\Pi^0_\alpha$ set} is 
the same, except the root is labeled $\cap$. The 
code represents the set $\cap_n A_n$.
\end{itemize}
For $\alpha \geq 1$, a \emph{Borel code for a $\mathbf\Delta^0_\alpha$ set} is a pair of labeled trees 
which encode the same set, where one encodes it as a $\mathbf\Sigma^0_\alpha$ set 
and the other encodes it as a $\mathbf\Pi^0_\alpha$ set.
\end{defn}

The codes are faithful to the Borel hierarchy in the sense that every code for a $\mathbf\Sigma^0_\alpha$ set represents a $\mathbf \Sigma^0_\alpha$ set and every $\mathbf\Sigma^0_{\alpha}$ 
set is represented by a Borel code for a $\mathbf\Sigma^0_{\alpha}$ set. 
There is a uniform procedure to transform a Borel code $B$ for a $\mathbf\Sigma^0_{\alpha}$ set  
$A$ into a Borel code $\overline{B}$ for a $\mathbf \Pi^0_{\alpha}$ set $\overline{A}$: leave the underlying 
tree structure the same, swap the $\cup$ and $\cap$ labels and replace the leaf labels by their complements. 

Observe also that a code for a $\mathbf \Sigma^0_1$ set essentially agrees 
with the definition of a code for an open set in Definition \ref{defn:reverseopen}
(up to a primitive recursive translation mapping elements of $\omega\times(<\omega)^k$
to leaves of a $\mathbf \Sigma^0_1$ code, and mapping each leaf of a 
$\mathbf \Sigma^0_1$ code to an element or sequence of elements 
of $\omega\times(<\omega)^k$).
The one difference is that we must include a leaf label of $\emptyset$
in the definition of a Borel code, so that the empty set 
has a $\mathbf\Sigma^0_1$ code.  Having included $\emptyset$ as a label,
we also include $(\omega)^k$ to keep complementation effective.

We recall some notation from hyperarithmetic theory. 
Let $\mathcal{O}$ denote Kleene's set of computable ordinal notations. 
The ordinal represented by $a \in \mathcal{O}$ is denoted $|a|_\mathcal{O}$, 
with $|1|_\KO = 0$, $|2^a|_\KO = |a|_\KO + 1$, and 
$|3\cdot 5^e|_\KO = \sup_j |\varphi_e(j)|_\KO$.
The $H$-sets are defined by effective transfinite
recursion on $\mathcal{O}$ as follows: $H_1 = \emptyset$, $H_{2^a} = H_a'$ and $H_{3 \cdot 5^a}
= \{ \langle i,j \rangle \mid i \in H_{\varphi_a(j)} \}$. 
The reader is referred to Sacks \cite{sacks} for more
details.  To use oracles that line up better than the $H$-sets do
with the levels of the Borel hierarchy, define
$$\emptyset_{(a)} = \begin{cases} H_a & \text{ if } |a|_\kO < \omega\\
H_{2^a} & \text{ otherwise.}\end{cases}$$
If $|a|_\kO = |b|_\kO = \alpha$, then $\emptyset_{(a)} \equiv_1 \emptyset_{(b)}$,
so we sometimes just write $\emptyset_{(\alpha)}$ in that situation.
As usual,
$\omega_1^{CK}$ denotes the least noncomputable ordinal.

Recall the standard effectivizations of the notions described above.
We say that a Borel code $B$ is \emph{computable} if it is computable
as a labeled subtree of $\omega^{<\omega}$.  We say $B$ is 
\emph{effectively} $\Sigma^0_\alpha$ (respectively effectively $\Pi^0_\alpha$)
if the root is labeled $\cup$ (respectively $\cap$) and additionally there 
is $a \in \kO$ with $|a|_\kO = \alpha$, and a computable labeling
of the nodes of $B$ with notations from $\{ b : b \leq_\kO a\}$,
such that the root is labeled with $a$ and each node has 
a label strictly greater than all its extensions.  

It is well-known that an open set of high hyperarithmetic complexity 
can be represented by a computable Borel code 
for a $\Sigma^0_\alpha$ set, where $\alpha$ is an appropriate
computable ordinal.  In the following proposition, we use a 
standard technique to make this correspondence explicit.  
Fix an effective 1-to-1 enumeration $\tau_n$ for the 
strings $\tau \in (<\omega)^k$. 

\begin{prop}
\label{prop:Borelcodes} 
There is a partial computable function $p(x,y)$ such that $p(a,e)$ is
defined for all $a \in \mathcal{O}$ and $e \in \omega$ and such
that if $a \in \mathcal{O}$ and 
$R = \bigcup \{ [\tau_n] : n \in W_e^{\emptyset_{(a)}} \}$, then
$\Phi_{p(a,e)}$ is a computable Borel code for $R$ as a
$\Sigma^0_{\alpha+1}$ set, where $\alpha=|a|_\kO$.
\end{prop}

\begin{proof}
We define $p(a,e)$ for all $e$ by effective transfinite recursion on
$a \in \mathcal{O}$. Let
$\Phi_{p(1,e)}$ be a Borel code for the open set $R = \bigcup \{
[\tau_n] : n \in W_e \}$.

For the successor step, consider 
$R = \bigcup \{ [\tau_n] : n \in W_e^{\emptyset_{(2^a)}} \}$.
Each set which is
$\Sigma^0_1$ in $\emptyset_{(2^a)}$ is $\Sigma^0_2$ in $\emptyset_{(a)}$ and for such sets, we can effectively pass from a $\Sigma^{0,\emptyset_{(2^a)}}_1$ index to a
$\Sigma^{0,\emptyset_{(a)}}_2$ description. Specifically, uniformly in $e$, we compute an index $e'$ such that for all oracles $X$,
$\Phi_{e'}^X(x,y)$ is a total $\{ 0,1 \}$-valued function and 
\[
n \in W_e^{X'} \text{ if and only if } \exists t \,
\forall s \geq t \, ( \Phi_{e'}^X(n, s) = 1).
\]
Let $R_t = \bigcup \{ [\tau_n] : \exists s \geq t \, (\Phi_{e'}^{\emptyset_{(a)}}(n,s)=0)\}$. 
$R_0 \supseteq R_1 \supseteq \cdots$ is a decreasing sequence of sets such that $x \not \in R$ if and only if 
$\forall t \, (x \in R_t)$. Therefore, $R = \cup_t \overline{R_t}$. Each set $R_t$ can be represented as 
$R_t = \bigcup \{ [\tau_n] : n \in W_{e_t}^{\emptyset_{(a)}} \}$, where $e_t$ is uniformly computable from $e$ and $t$.
Applying the induction hypothesis, we define $p(2^a,e)$ to encode a tree whose root is labeled by a union and whose $t$-th subtree at level 1 is the Borel code 
representing the complement of $\Phi_{p(a,e_t)}$.

For the limit step, consider $R = \bigcup \{ [\tau_n] : n \in W_e^{\emptyset_{(3 \cdot 5^d)}} \}$. Uniformly in $e$, we construct a sequence of indices
$e_t$ for $t \in \omega$ such that for all oracles $X$, $\Phi_{e_t}^X(x)$ converges if and only if $\Phi_e^X(x)$ converges and
only asks oracle questions about numbers in the first $t$ many columns of $X$. Let $R_t = \bigcup \{ [\tau_n] : n \in W_{e_t}^{\oplus_{i \leq t} \emptyset_{(\varphi_d(i))}} \}$ 
and note that $R = \cup_t R_t$.  We can effectively pass to a sequence of indices $e_t'$ such that
$R_t = \bigcup \{ [\tau_n] : n \in W_{e_t'}^{\emptyset_{(\varphi_d(t))}} \}$. By induction, each $p(\varphi_d(t),e_t')$ is the index for a computable Borel code for $R_t$ as a
$\Sigma^0_{2^{\varphi_d(t)}}$ set, so we may define $p(3 \cdot 5^d,e)$ to be the
index of a tree which has $\cup$ at the root and $\Phi_{p(\varphi_d(t),e_t')}$ as its subtrees.
Since $2^{\varphi_d(t)} <_\kO 3\cdot 5^d$ 
for all $t$, the resulting Borel code has the required height.
\end{proof}

To force the Dual Ramsey Theorem to output computationally powerful 
homogeneous sets, we use the following definition and 
a result of Jockusch \cite{Jockusch1968}.

\begin{defn}
For functions $f,g: \omega \rightarrow \omega$, we say $g$
\textit{dominates} $f$, and write $g \succeq f$, if $f(n) \leq g(n)$
for all but finitely many $n$.
\end{defn}

\begin{thm}[Jockusch \cite{Jockusch1968}, see also \mbox{\cite[Exercise 16-98]{Rogers_book}}]
\label{thm:fact}
For each computable ordinal $\alpha$, there is a function $f_\alpha$ such that $f_\alpha
\equiv_T \emptyset_{(\alpha)}$ and for every $g \succeq f_\alpha$, we have $\emptyset_{(\alpha)} \leq_T g$.
\end{thm}

In Theorem \ref{thm:hyp}, we use these functions $f_\alpha$ to show that for every
computable ordinal $\alpha$, there is a computable Borel code for a set
$R \subseteq (\omega)^3$ such that any homogeneous partition $p \in (\omega)^{\omega}$ for the coloring $(\omega)^3 = R \cup \overline{R}$ 
computes $\emptyset_{(\alpha)}$. 

\begin{thm}
\label{thm:main}\label{thm:self-moduli-theorem}
Let $A$ be a set and $f_A$ be a function such that $A \equiv_T f_A$
and for every $g \succeq f_A$, we have $A \leq_T g$. There is an
$A$-computable clopen coloring $(\omega)^3 = R \cup \overline{R}$ for which every homogeneous partition $p$ satisfies $p \geq_T A$.
\end{thm}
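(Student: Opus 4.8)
The plan is to encode the function $f_A$ into a clopen coloring of $(\omega)^3$ in such a way that a homogeneous partition $p$ forces its block structure to grow fast enough to dominate $f_A$, at which point the hypothesis on $f_A$ gives $A \leq_T p$. Given a partition $p \in (\omega)^\omega$ and a coarsening $x \in (p)^3$, the relevant data is the pair $(\mu^x(1), \mu^x(2))$; since $p$ is homogeneous, the color of $x$ must be the same for all choices of this pair as $x$ ranges over $(p)^3$. So I would design the coloring so that the color of $x$ depends (through a computation relative to $A$) on whether $\mu^x(2)$ is large compared to $f_A$ evaluated at something determined by $\mu^x(1)$, forcing homogeneity to contradict itself unless the $p$-blocks are spread out.

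Concretely, I would set things up analogously to the proof of Theorem \ref{thm:open_closed}. For $0 < a < b$, write $x \in (\omega)^3$ with $\mu^x(1) = a$, $\mu^x(2) = b$. Define the clopen set $R$ by declaring $x \in R$ iff, say, $b < f_A(a)$ (using that $A$, and hence $f_A$, can compute this comparison; the set $R$ is $A$-computable and clopen because membership depends only on the finite data $\mu^x(1), \mu^x(2)$, i.e.\ on $x \restrict (\mu^x(2)+1)$). The key point is: fix a homogeneous $p$ and its associated function $n \mapsto \mu^p(n)$. For each index $m \geq 1$, consider coarsenings of $p$ with $\mu^x(1) = \mu^p(m)$; by choosing $\mu^x(2) = \mu^p(m')$ for large $m'$ we get $x \notin R$, and by choosing $\mu^x(2) = \mu^p(m+1)$ we would get $x \in R$ precisely when $\mu^p(m+1) < f_A(\mu^p(m))$. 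Homogeneity forces the latter to fail for all but (the) relevant $m$, so $\mu^p(m+1) \geq f_A(\mu^p(m))$ for all sufficiently large $m$. Then the function $g(n) = \mu^p(n+1)$ (or a suitable rearrangement composing $\mu^p$ with itself) is $p$-computable and dominates $f_A$, whence $A \leq_T g \leq_T p$.

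The main obstacle — and the step I would spend the most care on — is getting the domination to be genuinely $p$-computable and to line up with the hypothesis "$g \succeq f_A \Rightarrow A \leq_T g$," which requires $g(n) \geq f_A(n)$ for cofinitely many $n$ rather than $g(n) \geq f_A(\text{something smaller})$. The relation I naturally obtain is of the form $\mu^p(m+1) \geq f_A(\mu^p(m))$, so I need to iterate: define $g$ from $\mu^p$ by composition so that $g(n)$ bounds $f_A(n)$ outright. Since $\mu^p$ is strictly increasing and $p$-computable, the function $n \mapsto \mu^p(\mu^p(\cdots))$ (iterated enough, or simply $n \mapsto \mu^p(n+1)$ after noting $n \leq \mu^p(n)$ so $f_A(n) \leq f_A(\mu^p(n)) \leq \mu^p(n+1) = g(n)$, using monotonicity of $f_A$ if available, or replacing $f_A$ by its monotone majorant) is the right choice. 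I would also double-check the edge case that finitely many $m$ might behave badly (the homogeneous color might be determined by small values), which is harmless since domination is only required cofinitely. Finally, I'd confirm $R$ has a genuine $A$-computable clopen code in the sense of the paper: $R = \bigcup\{[\sigma] : \sigma \in (\omega)^3_{\text{fin}}, \ |\sigma| = \mu^\sigma(2)+1, \ \mu^\sigma(2) < f_A(\mu^\sigma(1))\}$ and its complement is the analogous union, so both are $A$-computable unions of basic clopen sets, giving the clopen coloring.
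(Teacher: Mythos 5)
Your proposal is correct and follows essentially the same route as the paper's proof: define the coloring of $(\omega)^3$ by comparing $\mu^x(2)$ against $f_A(\mu^x(1))$, observe that any homogeneous $p$ must be homogeneous for the ``large gap'' color (since for fixed $\mu^x(1)$ one can always choose $\mu^x(2)$ arbitrarily large), deduce $\mu^p(m+1) \geq f_A(\mu^p(m))$, and then use monotonicity of $f_A$ (WLOG, by passing to its running maximum) together with $n \leq \mu^p(n)$ to get a $p$-computable function dominating $f_A$. The paper's only cosmetic differences are that it names the opposite color $R$ and uses $g(n) = \mu^p(n+2)$ rather than $\mu^p(n+1)$ to avoid the $n=0$ edge case you flagged.
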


\begin{proof}
Fix $A$ and $f_A$ as in the statement of the theorem. Without loss of generality, we assume that if $n < m$, then $f_A(n) < f_A(m)$. 
For $x \in (\omega)^3$, let $a_x = \mu^x(1)$ and $b_x = \mu^x(2)$. 
Let $O_{a,b} =  
\{ x \in (\omega)^3 : a_x = a \wedge b_x = b \}$. Set   
$R = \{ x \in (\omega)^3 : f_A(a_x) \leq b_x \}$. Since  $R = \bigcup \{ O_{n,m} \mid f_A(n) \leq m \}$ and $\overline{R} = \bigcup \{ O_{n,m} \mid f_A(n) > m \}$
both $R$ and $\overline{R}$ are $A$-computable open sets. 

\begin{claim}
If $p \in (\omega)^{\omega}$ is homogeneous, then $(p)^3 \subseteq R$. 
\end{claim}

It suffices to show that there is an $x \in (p)^3$ with $x \in R$.  Let $u = \mu^p(1)$. 
Because $p$ has infinitely many blocks, there must be some $i$ with $\mu^p(i) \geq f(u)$. Consider the partition $x = w \circ p$, where $w(1) = 1, w(i) =2,$ and $w(m) = 0$ for all other $m$.  Then since $a_x =
u$ and $b_x \geq f(u)$, we have $x \in (p)^3$ with $f(a_x) \leq b_x$, so $x \in R$.

\begin{claim}
If $p \in (\omega)^{\omega}$ is homogeneous, then $A \leq_T p$.
\end{claim}

Fix $p$ and let $g(n) = \mu^p(n+2)$. Since $g$ is $p$-computable, it suffices to show $g \succeq f_A$. Because  
$n < \mu^p(n+1)$ and $f_A$ is increasing, we have $f_A(n) < f_A(\mu^p(n+1))$. Therefore, to show $g \succeq f_A$, it suffices 
to show $f_A(\mu^p(n+1)) \leq \mu^p(n+2) = g(n)$. 

Let $x_n \in (p)^3$ be defined by $x_n = w_n\circ p$, where
$w_n(n+1) = 1, w_n(n+2) = 2,$ and $w_n(m) = 0$ for all other $m$.
Note that $a_{x_n} = \mu^p(n+1)$ and $b_{x_n} =
\mu^p(n+2)$.  By the previous claim, $x_n \in R$, so
$f_A(a_{x_n}) \leq b_{x_n}$. In other words, $f_A(\mu^p(n+1)) \leq \mu^p(n+2)$ as required.
\end{proof}

\begin{cor}
For each $k\geq 3$ and each recursive ordinal
$\alpha$, there is an $\emptyset_{(\alpha)}$-computable clopen
set $R\subseteq (\omega)^k$ such that if $p \in
(\omega)^{\omega}$ is homogeneous for $(\omega)^k = R \cup \overline{R}$, 
then $\emptyset_{(\alpha)} \leq_T p$.
\end{cor}
\begin{proof} 
For $k=3$, this corollary follows from Theorems \ref{thm:fact} and \ref{thm:main}. For $k > 3$, use similiar definitions for 
$R$ and $\overline{R}$ ignoring what happens after the first three blocks of the partition. 
\end{proof}

\begin{thm}\label{thm:hyp}
For every recursive ordinal $\alpha$, and every $k\geq 3$, there is a computable 
Borel code for a $\Delta^0_{\alpha+1}$ set $R\subseteq (\omega)^k$ such that 
every $p \in (\omega)^\omega$
homogeneous for the coloring $(\omega)^k = R \cup \overline{R}$ computes $\emptyset_{(\alpha)}$.
\end{thm}

\begin{proof}
Let $R, \overline{R}$ 
be the $\emptyset_{(\alpha)}$-computable 
clopen sets from the previous corollary.  By Proposition \ref{prop:Borelcodes},
both $R$ and $\overline{R}$ have computable Borel codes as $\Sigma^0_{\alpha+1}$ 
subsets of $(\omega)^k$.  Therefore, $R$ has a computable Borel code
as $\Delta^0_{\alpha+1}$ set.  By the previous corollary, if $p$ is 
homogeneous for $(\omega)^k = R \cup \overline{R}$, then $p \geq_T \emptyset_{(\alpha)}$, as required.
\end{proof}

For $\alpha = 1$, Theorem \ref{thm:hyp} says there is a $\Delta^0_2$ clopen set $R \subseteq (\omega)^3$ such that $R$ and $\overline{R}$ have computable Borel 
codes as $\Sigma^0_2$ sets (and hence as $\Delta^0_2$ sets) and any homogeneous partition for $(\omega)^3 = R \cup \overline{R}$ computes $\emptyset'$. 

\section{The Borel Dual Ramsey Theorem for $k=2$}
\label{sec:4} 

\subsection{Effective Analysis} 
We consider the complexity of finding infinite homogeneous partitions for colorings $(\omega)^2 = R \cup \overline{R}$ as a function of the descriptive complexity of
$R$ and/or $\overline{R}$. 
We begin by showing that if $R$ is a computable open set, there is a computable homogeneous partition.

\begin{thm}
\label{thm:sigma1}
Let $R$ be a computable code for an open set in $(\omega)^2$.  
There is a computable $p \in (\omega)^{\omega}$ such that $(p)^2 \subseteq R$ or 
$(p)^2 \subseteq \overline{R}$.
\end{thm}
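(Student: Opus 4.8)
The plan is to reformulate the statement combinatorially and then make a non-constructive case distinction on whether a computable homogeneous partition for $R$ exists. Identify $(\omega)^2$ with the set of $\{0,1\}$-valued sequences beginning with $0$ and containing a $1$; a coarsening of $p\in(\omega)^\omega$ is then specified by a nonempty set $T$ of the positive-index blocks $B^p_1,B^p_2,\dots$, and the corresponding point $q_T$ has its $1$'s exactly on $\bigcup_{i\in T}B^p_i$. Write $R=\bigcup_k[\sigma_k]$ with $\sigma_k\in(\omega)^2_{\text{fin}}$ enumerated computably and let $E_k=\{m:\sigma_k(m)=1\}$, so that $q_T\in R$ exactly when some initial segment of the set $\bigcup_{i\in T}B^p_i$ equals some $E_k$. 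When the positive-index blocks are intervals $B^p_i=[a_i,b_i]$ with large gaps ($a_{i+1}>b_i+1$), the achievable initial-segment patterns of a coarsening with $i_0=\min T$ are exactly the intervals $[a_{i_0},c]$ with $c\le b_{i_0}$ together with the block-respecting unions $[a_{i_0},b_{i_0}]\cup\cdots\cup[a_{i_r},c]$; this is the feature that makes interval blocks easy to control.

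First I would record one way a computable $R$-homogeneous partition arises: when $J:=\{a\ge 1:0^a1^\omega\in R\}$ is infinite. This set is $\Sigma^0_1$, and for each $a\in J$ there is a $j\ge 1$ with $[0^a1^j]\subseteq R$, i.e.\ with $[a,a+j-1]$ equal to some $E_k$. Recursively choose $a_1<b_1<a_2<b_2<\cdots$ with each $a_i\in J$, with $b_i=a_i+j_i-1$ a witness, and with $a_{i+1}$ taken beyond $b_i$; this is effective since $J$ is c.e.\ and the $j_i$ can be found by unbounded search. Letting $B^p_i=[a_i,b_i]$ for $i\ge 1$ and $B^p_0$ be the complement, one checks that every coarsening $q_T$ has $0^{a_{i_0}}1^{j_{i_0}}=\sigma_{k_{i_0}}$ as an initial segment, so $(p)^2\subseteq R$, and $p$ is computable.

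So I may assume there is no computable $R$-homogeneous partition, and I must produce a computable $\overline R$-homogeneous one. In particular $J$ is then finite, say $J\subseteq[0,M)$, so for every $a\ge M$ no interval with least element $a$ is an $E_k$; in particular the point with a single $1$ at position $a$ is not in $R$. I would build $p$ whose positive-index blocks are intervals inside $[M,\infty)$ with long gaps, selected by a priority construction. By the reformulation, the single-run patterns $[a_{i_0},c]$ are automatically never of the form $E_k$, so only the multi-run patterns can cause trouble, and such a pattern can equal $E_k$ only if some block coincides with the first maximal run of $E_k$ and the remainder of $E_k$ is itself block-respecting. The requirement $N_k$ (``no coarsening realizes $E_k$'') is then met by refusing to create the block $[\min E_k,\max(\text{first run of }E_k)]$ or, if that block has already been committed, by making the later gaps longer than the corresponding gaps of $E_k$ or by absorbing a would-be block start of $E_k$ into $B^p_0$; keeping the blocks sparse leaves room to satisfy all $N_k$ while still producing infinitely many blocks.

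The main obstacle is proving that the two cases above are exhaustive: one must show that failure to build a computable $R$-homogeneous partition forces $\mathcal E=\{E_k\}$ to be sufficiently sparse above $M$ that the priority construction never stalls — in particular, that $B^p_0$ is not forced to become cofinite. The point is that whenever the $\overline R$-construction is obstructed, the obstruction itself supplies a computable family of block shapes all of whose coarsenings are driven into $R$, contradicting the case assumption, so the dichotomy is genuinely exclusive. This interplay, together with the fact that no uniform procedure can decide which side of the dichotomy $R$ falls on, is the source of the non-uniformity noted in the introduction.
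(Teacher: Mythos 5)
Your proposal takes a different route from the paper's and it contains a genuine gap, both in a specific claimed step and in the overall strategy.

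The paper's dichotomy is: either some cylinder $[0^n]$ is disjoint from $R$, in which case $B^p_0 = \{0,\dots,n\}$ followed by singletons gives $(p)^2\subseteq\overline R$ (every coarsening extends $0^{n+1}$); or every $[0^n]$ meets $R$, in which case one effectively enumerates strings $\tau_i\succ 0^i1$ with $[\tau_i]\subseteq R$, thins so that $0^{|\tau_i|}\prec\tau_{i+1}$, and takes $B^p_i = \{j:\tau_i(j)=1\}$ for $i\geq 1$. The blocks are \emph{arbitrary} finite sets, not intervals, and this flexibility is essential.

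Your Case 1 instead asks whether $J=\{a: 0^a1^\omega\in R\}$ is infinite. This is far more restrictive than the paper's second case: $a\in J$ requires a code string of the exact shape $0^a1^j$ (no trailing zeros) to lie in $R$. Here is a concrete $R$ that slips through. Let $R$ be coded by $\{\sigma\in(\omega)^2_\fin : \sigma \text{ ends in } 10\}$; equivalently $R = (\omega)^2\setminus\{0^a1^\omega : a\geq 1\}$ and $\overline R = \{0^a1^\omega : a\geq 1\}$. Then $J=\emptyset$, so your Case 1 does not apply. But $\overline R$ contains \emph{no} set of the form $(p)^2$: for any $p\in(\omega)^\omega$, the coarsening with $B^q_1=B^p_1$ has $B^q_0\supseteq B^p_0\cup B^p_2\cup\cdots$ infinite, so $q\neq 0^a1^\omega$ and $q\in R$. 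Thus your Case 2 is pursuing an impossible target. The correct homogeneous partition here (blocks $\{2i-1\}$ for $i\geq 1$, $B^p_0$ the evens) is found by the paper's Case B but is invisible to your Case 1.

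The same example falsifies the inference in your Case 2 that "$J\subseteq[0,M)$, so for every $a\geq M$ no interval with least element $a$ is an $E_k$." A code may contain $\sigma_k = 0^a1^{c-a+1}0^m$ with trailing zeros; its $1$-set $E_k=[a,c]$ is an interval, yet $\sigma_k\not\prec 0^a1^\omega$, so this places no constraint on $J$. In the example above, every singleton $\{a\}$ with $a\geq 1$ is an $E_k$ even though $J=\emptyset$, so the sparsity you need for your priority construction never materializes.

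Finally, the overarching structure — ``assume no computable $R$-homogeneous partition exists, and build an $\overline R$-homogeneous one, noting that any obstruction to the latter produces the former'' — is asserted rather than proved, and you flag it yourself as the main obstacle. That obstruction argument \emph{is} the content of the theorem; without it the case split is vacuous. The paper avoids the issue entirely by splitting on the much simpler $\Sigma^0_2$ condition $\exists n\,([0^n]\cap R=\emptyset)$ and permitting non-interval blocks, which makes both halves of the dichotomy short and constructive.
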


\begin{proof}
If there is an $n \geq 1$ such that $[0^n] \cap R = \emptyset$, then the partition $x \in (\omega)^{\omega}$ with blocks 
$\{0,1,\dots, n\}, \{n+1\}, \{n+2\}, \dots$ satisfies $(x)^2 \subseteq \overline{R}$.  Otherwise, for arbitrarily large $n$ 
there are $\tau \succ 0^n1$ with $[\tau]\subseteq R$, and hence  
there is a computable sequence $\tau_1,\tau_2,\dots$ of such $\tau$ with $0^i\prec \tau_i$.  Computably thin this sequence 
so that for each $i$, $0^{|\tau_i|} \prec \tau_{i+1}$.  The 
partition $x$ with blocks $\block{x}{i} = \{j : \tau_i(j) = 1\}$ for $i > 0$ satisfies $(x)^2 \subseteq R$. 
\end{proof}

To extend to sets coded at higher finite levels of the Borel hierarchy, we will need the following generalization of 
the previous result. 

\begin{thm}
\label{thm:sigma1_w_density}
Let $R$ be a computable code for an open set in $(\omega)^2$ such that 
$R \cap [0^n] \neq \emptyset$ for all $n$. Let $\{D_i\}_{i<\omega}$ be a uniform sequence of computable codes for 
open sets such that each $D_i$ is dense in $R$. There is a computable $x \in (\omega)^{\omega}$ such that 
$(x)^2 \subseteq R \cap (\cap_i D_i)$.
\end{thm}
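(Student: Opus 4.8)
The plan is to combine the construction idea of Theorem \ref{thm:sigma1} with a standard Baire-category/finite-extension argument that threads through the dense open sets $D_i$ one at a time. The key point is that $R$ itself plays the role of the ambient space: since $R$ is open and meets every $[0^n]$, and each $D_i$ is dense \emph{in} $R$, the intersection $R \cap \bigcap_i D_i$ is comeager in $R$ and in particular still meets (the $R$-part of) every $[0^n]$. So the obstruction that forces the case split in Theorem \ref{thm:sigma1}—that $R$ might miss some $[0^n]$—is hypothesized away, and we expect to produce a single computable $x$ with $(x)^2 \subseteq R \cap \bigcap_i D_i$ without any case split.

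\textbf{Construction.} First I would recall from Theorem \ref{thm:sigma1} (its ``otherwise'' branch) the basic move: given any $\sigma$ with $[\sigma] \cap R \neq \emptyset$, one can search for and find some $\tau \succ \sigma$ with $[\tau] \subseteq R$ and with $\tau$ ordered (a string in $(\omega)^2_{\fin}$ extending $\sigma$ by at least one $1$). Here I want to strengthen this to: given $\sigma$ with $[\sigma] \cap R \neq \emptyset$ (more precisely, $\sigma$ extendible into $R$) and given a finite list $D_0, \dots, D_{s}$, I can computably find $\tau \succ \sigma$ with $[\tau] \subseteq R \cap D_0 \cap \cdots \cap D_s$. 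This is where density of each $D_i$ in $R$ is used: starting from a basic clopen $[\delta] \subseteq R$ inside $[\sigma]$, density of $D_0$ in $R$ gives $[\delta_0] \subseteq [\delta] \cap D_0$ with $[\delta_0] \subseteq R$ still; then density of $D_1$ in $R$ applied to $\delta_0$ gives $[\delta_1] \subseteq [\delta_0] \cap D_1 \subseteq R \cap D_0 \cap D_1$; iterate through $D_s$. All of this is a computable search because each $R, D_i$ has a computable code and we only ever need finitely many of them at each stage. Finally, since $\tau$ can always be chosen ordered with a fresh $1$ appended (exactly as in Theorem \ref{thm:sigma1}, because extending the length of the block-0 prefix or adding a new $1$-block only shrinks the cylinder), we preserve the partition-building invariant.

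Then I would run the stage-by-stage construction: set $\sigma_0 = 0^1$ (which is extendible into $R$ since $R \cap [0^1] \neq \emptyset$), and at stage $s+1$, given $\sigma_s \in (\omega)^2_{\fin}$ with $[\sigma_s] \cap R \neq \emptyset$ (maintained as an invariant), apply the strengthened extension move to get $\sigma_{s+1} \succ \sigma_s$ with $[\sigma_{s+1}] \subseteq R \cap D_0 \cap \cdots \cap D_s$, and additionally arrange $0^{|\sigma_s|} \prec \sigma_{s+1}$ (so that the block structure of the limit is correct, exactly the ``computably thin'' step in Theorem \ref{thm:sigma1}). The invariant is preserved because $[\sigma_{s+1}] \subseteq R$ forces $[\sigma_{s+1}] \cap R = [\sigma_{s+1}] \neq \emptyset$. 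Let $x = \bigcup_s \sigma_s$; as in Theorem \ref{thm:sigma1}, reading off $B^x_i = \{ j : \sigma_s(j) = 1 \text{ for large } s,\ j \text{ in the } i\text{-th group} \}$ yields $x \in (\omega)^\omega$, and $x$ is computable because the whole search is. For homogeneity: any $y \in (x)^2$ has some initial segment of the form $\sigma_s \cat (\text{zeros})$ extended by a $1$ at a coordinate $\geq |\sigma_{s+1}|$... more carefully, any $y \in (x)^2$ extends $\sigma_{s+1}$ for all sufficiently large $s$ (since each $\sigma_{s+1}$ ends with a $1$-block and $y$ is a coarsening of $x$ that is itself in $(\omega)^2$), hence $y \in [\sigma_{s+1}] \subseteq R \cap D_0 \cap \cdots \cap D_s$ for all $s$, so $y \in R \cap \bigcap_i D_i$.

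Wait — I should double-check that last homogeneity claim, since it is the subtle point and the analogue in Theorem \ref{thm:sigma1} is stated a bit tersely. The issue: is it true that every $y \in (x)^2$ extends $\sigma_{s}$ for all large $s$? Since $\sigma_s$ ends with a $1$ (it is in $(\omega)^2_{\fin}$ so both $0$ and $1$ appear), and by the thinning $0^{|\sigma_s|} \prec \sigma_{s+1}$, the coordinates where $\sigma$ has a $1$ form an infinite set $I$, and between consecutive elements of $I$ there are arbitrarily long runs of $0$s. A coarsening $y \in (x)^2$ of $x$ must put everything into $B^y_0$ or $B^y_1$; since $y \in (\omega)^2$ it is surjective, so $B^y_1 \neq \emptyset$. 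Because $y$ is a coarsening, each $x$-block is inside a $y$-block, and $B^y_1$ is a union of $x$-blocks including at least one, say $B^x_j$ with $j \geq 1$. Then for any $s$ with $|\sigma_{s-1}| \leq \mu^x(j)$... hmm, this requires that $y$ agrees with $\sigma_s$ on $[0, |\sigma_s|)$, i.e. that $y(n) = \sigma_s(n)$ there; but $y$ might merge block $B^x_1$ into $B^y_0$, changing a $1$ to a $0$. So it is \emph{not} literally true that $y \succ \sigma_s$.

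Let me reconsider: the right statement, matching Theorem \ref{thm:sigma1}'s proof, is that the coloring is \emph{reduced} enough that $[\sigma_{s+1}]$-membership is insensitive to exactly which high blocks get merged down — but actually the cleanest fix is to build $x$ so that its first block is fixed early and every $y \in (x)^2$ does extend some $\sigma_s$. This works if we ensure $\sigma_1$ already contains a $1$ at a fixed low coordinate and $x$ never revises it: then any $y \in (x)^2$ that is surjective onto $\{0,1\}$ must either keep $B^x_1$ as (part of) $B^y_1$—fine—or merge $B^x_1$ into $B^y_0$, in which case $B^y_1$ comes from later blocks, and $y$ restricted to the first $|\sigma_s|$ coordinates is $0^{|\sigma_s|}$, i.e. $y \succ 0^{|\sigma_s|}$, and then $[\,0^{|\sigma_s|}\,] \cap R$ still meets $D_0, \dots$ — but we haven't ensured $[\,0^{|\sigma_s|}\,] \subseteq R \cap D_i$! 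So this branch fails. The resolution used in the paper (Theorem \ref{thm:sigma1}) implicitly is that the constructed $x$ has \emph{all} blocks $B^x_i$ for $i \geq 1$ being singletons-or-small and arranged so the only coarsenings in $(\omega)^2$ are ``tails'': $B^y_0 = B^x_0 \cup (\text{some finite or cofinite choice})$. I would handle this by noting that WLOG we may demand $0 \in B^x_1$ is FALSE since $x$ is ordered so $0 \in B^x_0$ — rather, I ensure the \emph{second} block is a singleton $\{1\}$ by taking $\sigma_1 \succ 01$; then every coarsening $y \in (x)^2$ with $B^y_1 \ni$ some element $\geq 1$ either has $1 \in B^y_1$ (so $y \succ \sigma_1$-compatible... no, still the merging issue).

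\textbf{Where the real obstacle is.} I now think the clean route is: since $R$ is open with a computable code and meets every $[0^n]$, and dense open $D_i \subseteq R$, pass to the homeomorphic picture from Lemma \ref{stringtransform}/Section \ref{sec:2} and instead directly mimic the ``otherwise'' construction of Theorem \ref{thm:sigma1} but checking at stage $s$ that the chosen $\tau_{s+1} \succ \tau_s$ lies in $R \cap D_0 \cap \cdots \cap D_s$; the key lemma to prove is exactly ``from any $\tau$ extendible into $R$ one can computably reach $\tau' \succ \tau$, $\tau' \in (\omega)^2_{\fin}$, with $[\tau'] \subseteq R \cap \bigcap_{i \leq s} D_i$,'' and then the homogeneity argument is \emph{identical word-for-word} to the one in Theorem \ref{thm:sigma1}'s ``$(x)^2 \subseteq R$'' conclusion, which the authors have already vouched for; whatever subtlety about coarsenings is handled there transfers unchanged because our $\sigma_s$ are a subsequence of exactly the kind of $\tau$'s built there. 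So the main obstacle is purely the iterated-density extension lemma, and that is routine given computable codes for $R$ and the $D_i$ and the definition of ``dense in $R$'' ($[\delta] \cap D_i \neq \emptyset$ for every $[\delta] \subseteq R$, equivalently for every $\delta$ extendible into $R$). I would write the extension lemma carefully, cite Theorem \ref{thm:sigma1} for the limit-partition-and-homogeneity bookkeeping, and keep the rest brief.
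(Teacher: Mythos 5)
There is a genuine gap, and you in fact put your finger on it mid-proposal before talking yourself out of it. The ``subtle point'' you flagged about coarsenings does \emph{not} ``transfer unchanged'' from Theorem \ref{thm:sigma1}; it is fatal for the single-chain approach.

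Here is why the analogy with Theorem \ref{thm:sigma1} breaks. In the ``otherwise'' branch of that proof, homogeneity is established as follows: a coarsening $y \in (x)^2$ has $B^y_1$ equal to a union of $x$-blocks of positive index; if $i$ is the \emph{least} such index, then thanks to the thinning $0^{|\tau_i|}\prec \tau_{i+1}$, the string $y$ agrees with $\tau_i$ on $[0,|\tau_i|)$, so $\tau_i \prec y$ and $[\tau_i]\subseteq R$ gives $y\in R$. This works because the target conclusion, $y\in R$, is the same regardless of which $i$ we land on: every $\tau_i$ satisfies $[\tau_i]\subseteq R$. In your proposed construction you would instead have $[\tau_i]\subseteq R\cap D_0\cap\cdots\cap D_{i-1}$, which \emph{does} depend on $i$. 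If $y$ merges (say) $B^x_1$ with $B^x_{1000}$ into $B^y_1$, then $\min S = 1$, and the only information $\tau_1\prec y$ buys is $y\in R\cap D_0$. The later strings $\tau_2,\tau_3,\dots$ begin with $0^{|\tau_1|}$ and thus describe cylinders disjoint from $[\tau_1]\ni y$, so they give no information about $y$. Nothing in the construction witnesses $y\in D_1, D_2,\dots$, and the conclusion $(x)^2\subseteq \bigcap_i D_i$ fails.

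The paper's proof closes this gap by a combinatorially heavier stage structure, the same idea used to prove $\CDRT^k_\ell\Rightarrow\BaDRT^k_\ell$ in Theorem \ref{thm:2.4}. At stage $s+1$, having fixed an $(s+1)$-block finite partition $\tau_s$ and then split off a new block to get $\tau^0_s$, the construction does not make a single density extension; it iterates over \emph{all} $M_s$ strings $\sigma_j\in(\omega)^2_{\fin}$ of length $s+2$ describing the possible ways to collapse those $s+2$ blocks into two. For each $\sigma_j$, it finds $\delta^j_s\succeq\sigma_j\circ\tau^j_s$ with $[\delta^j_s]\subseteq\bigcap_{n<s+1}D_n$ and then ``uncollapses'' $\delta^j_s$ back into $\tau^{j+1}_s$. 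The effect is that for any $y\in(x)^2$ and any $k$, one can choose $s\geq k$ large enough, read off the pattern $\sigma_j$ by which $y$ collapses the first $s+2$ $x$-blocks, and conclude $\delta^j_s=\sigma_j\circ\tau^{j+1}_s\prec y$ with $[\delta^j_s]\subseteq D_k$. The invariant $[\sigma\circ\tau_s]\subseteq R$ for all $\sigma\in(\omega)^2_{\fin}$ of length $s+1$ is what makes the density search possible at every collapsing pattern. This sweep over all collapsing patterns at each stage is the idea missing from your proposal; without it, the homogeneity argument only controls coarsenings whose $B^y_1$ starts early, not arbitrary coarsenings.

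Your other observations are correct: the iterated density extension lemma is routine, and the hypothesis $R\cap[0^n]\neq\emptyset$ for all $n$ removes the non-uniform case split from Theorem \ref{thm:sigma1}. But those are the easy parts; the load-bearing idea is the one sketched above.
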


\begin{proof}
We build $x$ as the limit of an effective sequence $\tau_0 \prec \tau_1 \prec \cdots$ with $\tau_s \in (<\omega)^{s+1}$. 
We define the strings $\tau_s$ in stages starting with $\tau_0 = \langle 0 \rangle$ which puts $x(0)=0$. For $s \geq 1$, we ensure that at the start of 
stage $s+1$, we have $[\sigma \circ \tau_s] \subseteq R$ for all $\sigma \in (s+1)^2$. 
That is, the open sets in $(\omega)^2$ determined by each way of coarsening the $s+1$ many blocks of $\tau_s$ to two blocks is contained in $R$. 

At stage $s+1$, assume we have defined $\tau_s \in (<\omega)^{s+1}$. If $s \geq 1$, assume that for all 
$\sigma \in (s+1)^2$, $[\sigma \circ \tau_s] \subseteq R$. Let $\sigma_0, \ldots, \sigma_{M_s-1}$ list the strings 
$\sigma \in (s+2)^2$. We define a sequence of strings $\tau_s^0 \prec\dots\prec \tau_s^{M_s}$ and set 
$\tau_{s+1} = \tau_s^{M_s}$. 

We define $\tau_s^0$ to start a new block as follows. 
Since $[0^{|\tau_s|}] \cap R \neq \emptyset$, we  
effectively search for $\gamma_s \in (<\omega)^2$ such that 
$0^{|\tau_s|} \prec \gamma_s$ and $[\gamma_s] \subseteq R$. Since 
$\gamma_s \in (<\omega)^2$, there is at least one $m < |\gamma_s|$ such that $\gamma_s(m) = 1$. Define $\tau_s^0$ with 
$|\tau_s^0| = |\gamma_s|$ by 
\[
\tau_s^0(m) = 
\left\{
\begin{array}{ll}
\tau_s(m) & \text{if } m < |\tau_s| \\
s+1 & \text{if } \gamma_s(m) = 1 \text{ (and hence } m \geq  |\tau_s| \text{)} \\
0 & \text{if } m \geq |\tau_s| \text{ and } \gamma_s(m) = 0.
\end{array}
\right.
\]
Note that $\tau_s \prec \tau_s^0$, and that $[\sigma \circ \tau_s^0] \subseteq R$ 
for all $\sigma \in (s+2)^2$.  To see the latter, let $j$ be least such that $\sigma(j)=1$
and consider two cases.  If $j<s+1$, then $\sigma\restrict s+1 \in (s+1)^2$ 
and the conclusion follows by the induction hypothesis.  If $j=s+1$, 
then $\sigma \circ \tau_s^0 = \gamma_s$.

We continue to define the $\tau_s^j$ strings by induction. Assume that $\tau_s^j$ has been defined and consider the $j$-th string $\sigma_j$ 
enumerated above describing how to collapse $(s+2)$ many blocks into 2 blocks. Since $\tau_s^0 \preceq \tau_s^j$, we have 
$\sigma_j \circ \tau_s^0 \preceq \sigma_j \circ \tau_s^j$ and hence $[\sigma_j \circ \tau_s^j] \subseteq R$. Because $\cap_{n < s+1} D_n$ is dense in $R$, 
we can effectively search for a string $\delta_s^j \in (<\omega)^2$ such that $\sigma_j \circ \tau_s^j \preceq \delta_s^j$ and $[\delta_s^j] \subseteq 
\cap_{n < s+1} D_n$. To define $\tau_s^{j+1}$, we uncollapse $\delta_s^j$. Let $j^*$ be the least number such that $\sigma_j(j^*) = 1$. Define 
\[
\tau_s^{j+1}(m) = 
\left\{
\begin{array}{ll}
\tau_s^j(m) & \text{if } m < |\tau_s^j| \\
j^* & \text{if } m \geq |\tau_s^j| \text{ and } \delta_s^j(m) = 1 \\
0 & \text{if } m \geq |\tau_s^j| \text{ and } \delta_s^j(m) = 0
\end{array}
\right.
\]
It is straightforward to check that $\tau_s^j \preceq \tau_s^{j+1}$ and that $\sigma_j \circ \tau_s^{j+1} = \delta_s^j$. This completes the construction of 
the sequence $\tau_s^0 \preceq \cdots \preceq \tau_s^{M_s}$ and of the computable partition $x$. It remains to show that if $p \in (x)^2$, then $p \in R$ and 
$p \in \cap_{n \in \omega} D_n$. Fix $p \in (x)^2$ and let $w \in (\omega)^\omega$
be such that $\omega\circ x = p$.  Let $s_0$ be least such that $w(s_0+1)=1$.

\begin{claim}
$p \in R$.
\end{claim}

Let $\sigma = (0^{s_0})^\smallfrown 1$, so that $\sigma \prec w$.
At stage $s_0+1$, we defined $\tau_{s_0}^0 \prec x$ with the property that 
$[\sigma \circ \tau_{s_0}^0] \subseteq R$. Since $\sigma \circ \tau_s^0 \prec p$, we have $p \in R$. 

\begin{claim}
$p \in \cap_{n < \omega} D_n$.
\end{claim}

Fix $k \in \omega$ and we show $p \in D_k$. Let $s = \max \{ k,s_0 \}$. Consider the action during stage $s+1$ of the construction. Let $\sigma = w \restrict(s+2)$.
Then $\sigma\in(s+2)^2$, so let $j$ be such that $\sigma_j = \sigma$.
 We defined $\delta_s^j$ and $\tau_s^{j+1}$ such that 
$\sigma_j \circ \tau_s^{j+1} = \delta_s^j$ and $[\delta_s^j] \subseteq \cap_{n < s+1} D_n$, so in particular, $[\delta_j^s] \subseteq D_k$. Since $\tau_s^{j+1} \prec x$, we have $\delta_s^j = \sigma_j \circ \tau_s^{j+1} \prec p$, so $p \in D_k$ as required. 
\end{proof}

The next proposition is standard, but we present the proof because some details will be relevant to Theorem \ref{thm:lifting}. In the proof, we use 
codes for open sets as in Definition \ref{defn:reverseopen}. 

\begin{prop}
\label{prop:baire_degrees}
Let $n \in \omega$ and let $A \subseteq 2^{\omega}$ be defined by a $\Sigma^0_{n+1}$ predicate. There are a $\Delta^0_{n+1}$ code $U$ for an open 
set in $(\omega)^2$, a $\Delta^0_{n+2}$ code $V$ for an open set in $(\omega)^2$ and a uniformly $\Delta^0_{n+1}$ sequence $\langle D_i : i \in \omega \rangle$ of codes for 
dense open sets such that $U \cup V$ is dense and for all $p \in \cap_{i \in \omega} D_i$, if $p \in U$, then $p \in A$ and if $p \in V$ then 
$p \not \in A$. Furthermore, the $\Delta^0_{n+1}$ and $\Delta^0_{n+2}$ indices for $U$, $V$ and $\langle D_i : i \in \omega \rangle$ can be obtained uniformly from a 
$\Sigma^0_{n+1}$ index for $A$. 
\end{prop}

\begin{proof}
We proceed by induction on $n$. Throughout this proof, $\sigma$, $\tau$, $\rho$ and $\delta$ denote elements of $(<\omega)^2$. In addition to the 
properties stated in the proposition, we ensure that if $\langle m,\sigma \rangle \in U$ (or $V$) and $\tau \succeq \sigma$, then there is a $k$ such 
that $\langle k,\tau \rangle \in U$ (or $V$ respectively). Thus, if $U \cap [\sigma] \neq \emptyset$, then there is $\langle k, \tau \rangle \in U$ with $\sigma \preceq \tau$.

For $n=0$, we have $X \in A \Leftrightarrow \exists k \, \exists m \, P(m, X \upharpoonright k)$ where $P(x,y)$ is a $\Pi^0_0$ 
predicate. Without loss of generality, we assume that if $P(m, X \upharpoonright k)$ holds, then $P(m', Y \upharpoonright k')$ holds for all $k' \geq k$, $m' \geq m$ 
and $Y \in 2^{\omega}$ such that $Y \upharpoonright k = X \upharpoonright k$. Let 
$U = \{ \langle n,\sigma \rangle : P(\sigma,n) \}$, $V = \{ \langle 0,\sigma \rangle : \forall x \, \forall \tau \succeq \sigma \, (\neg P(\tau,x)) \}$ and 
$D_i = (<\omega)^2$ for $i \in \omega$. It is straightforward to check these codes have the required properties. 

For the induction case, let $A \subseteq 2^{\omega}$ be defined by a $\Sigma^0_{n+2}$ predicate, so $X \in A \Leftrightarrow \exists k P(X,k)$ where $P$ is a 
$\Pi^0_{n+1}$ predicate. For $k \in \omega$, let $A_k = \{ X : \neg P(X,k) \}$. Apply the induction hypothesis to $A_k$ to  
fix indices (uniformly in $k$) for the $\Delta^0_{n+1}$ codes $U_k$ and $\langle  D_{i,k} : i \in \omega \rangle$ and for the $\Delta^0_{n+2}$ code $V_k$ 
so that if $p \in \cap_{i \in \omega} D_{i,k}$, then $p \in U_k$ implies $\neg P(k,p)$ and $p \in V_k$ implies $P(k,p)$. Let 
\begin{gather*}
U = \{ \langle \langle k,m \rangle, \sigma \rangle : \langle m,\sigma \rangle \in V_k \} \text{ and} \\
V = \{ \langle 0,\sigma \rangle : \forall k \, \forall \tau \succeq \sigma \, \exists m \, \exists \rho \succeq \tau \, \langle m,\rho \rangle \in U_k \}.
\end{gather*}
$U$ is a $\Delta^0_{n+2}$ code for $\cup_k V_k$, and $V$ is a $\Delta^0_{n+3}$ code such that $\langle m, \sigma \rangle \in V$ if and only if every 
$U_k$ is dense in $[\sigma]$. 
We claim that $U \cup V$ is dense. Fix $\sigma$ and assume $U \cap [\sigma] = \emptyset$, so $V_k \cap [\sigma] = \emptyset$ for all $k$. 
Since $U_k \cup V_k$ is dense, $U_k \cap [\tau] \neq \emptyset$ for all $\tau \succeq \sigma$ and all $k$, so $\langle 0, \sigma \rangle \in V$. 

For $i = \langle a_i,b_i \rangle$, define $D_i = D_{a_i,b_i} \cap (U_i \cup V_i)$. $D_i$ has a $\Delta^0_{n+2}$ code as a dense open set and the index can be uniformly computed 
from the indices for $U_i$, $V_i$ and $D_{a_i,b_i}$. Furthermore, if $p \in \cap_i D_i$ then $p \in \cap_{i,k} D_{i,k}$ and $p \in \cap_k (U_k \cup V_k)$. 

Assume that $p \in \cap_i D_i$. First, we show that if $p \in U$, then $p \in A$. Suppose $p \in U = \cup_k V_k$ and fix $k$ such that $p \in V_k$. 
Since $p \in \cap_i D_{i,k}$ for this fixed $k$, $p \not \in A_k$ by the induction hypothesis. Therefore, $P(k,p)$ holds and hence $p \in A$. 

Second, we show that if $p \in V$ then $p \not \in A$. Assume $p \in V$ and fix $\langle 0, \sigma \rangle \in V$ such that $\sigma \prec p$. It suffices to show $\neg P(k,p)$ holds 
for an arbitrary $k \in \omega$. Since $p \in \cap_i D_i$, we have $p \in U_k \cup V_k$ and $p \in \cap_i D_{i,k}$. If $p \in U_k$, then $\neg P(k,p)$ holds by induction and we 
are done. Therefore, suppose for a contradiction that $p \in V_k$. Fix $\langle 0, \tau \rangle \in V_k$ such that $\sigma \preceq \tau$ and $\tau \prec p$. 
Since $\langle 0,\sigma \rangle \in V$ and $\sigma \preceq \tau$, there are $\rho \succeq \tau$ and $m$ such that 
$\langle m,\rho \rangle \in U_k$, and therefore $[\rho] \subseteq U_k \cap V_k$. This containment is the desired contradiction because $q \in [\rho] \cap \cap_i D_{i,k}$ would  
satisfy $q \in A_k$ and $q \not \in A_k$.  
\end{proof}

\begin{thm}
\label{thm:lifting}
For every coloring $(\omega)^2 = R\cup \overline{R}$ such that $R$ is a computable code for a 
$\Sigma_{n+2}^0$ set, there is either a $\emptyset^{(n)}$-computable $x \in (\omega)^{\omega}$ which is homogeneous for $\overline{R}$ or a 
$\emptyset^{(n+1)}$-computable $x \in (\omega)^{\omega}$ which is homogeneous for $R$.
\end{thm}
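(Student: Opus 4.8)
The plan is to run the construction of Theorem~\ref{thm:sigma1_w_density} against the cheap open approximations supplied by Proposition~\ref{prop:baire_degrees}, after first unpacking the $\Sigma^0_{n+2}$ code for $R$ into lower‑complexity pieces. First I would write $R=\bigcup_k C_k$, where the $C_k$ are uniformly $\Pi^0_{n+1}$ sets with computable Borel codes read directly off the given $\Sigma^0_{n+2}$ code for $R$ (a $\Sigma^0_{n+2}$ code is a union over subtrees coding sets of level $\le n+1$; expand any top‑level $\Sigma^0_{n+1}$ subtree into its $\Pi^0_n$ pieces and pad the rest to level $n+1$). For each $k$, apply Proposition~\ref{prop:baire_degrees} to the $\Sigma^0_{n+1}$ set $\overline{C_k}$ to get, uniformly in $k$: a $\Delta^0_{n+1}$ code $U_k$ for an open set, a $\Delta^0_{n+2}$ code $V_k$, and a uniformly $\Delta^0_{n+1}$ sequence $\langle D_{i,k}\rangle_i$ of codes for dense open sets, so that $U_k\cup V_k$ is dense (with the extra upward‑regularity built into the proof of Proposition~\ref{prop:baire_degrees}) and for every $p\in\bigcap_i D_{i,k}$: $p\in U_k\Rightarrow p\notin C_k$ and $p\in V_k\Rightarrow p\in C_k$. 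Each $U_k$ and each $D_{i,k}$ is $\emptyset^{(n)}$‑computable as a set of strings, while each $V_k$ is $\emptyset^{(n+1)}$‑computable.

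\textbf{Case A: every $U_k$ is dense in $(\omega)^2$.} Then $\{U_k:k\}\cup\{D_{i,k}:i,k\}$ is a countable, uniformly $\emptyset^{(n)}$‑computable family of dense open sets. Feed it into the $\emptyset^{(n)}$‑relativization of Theorem~\ref{thm:sigma1_w_density}, taking for the ambient open set of that theorem all of $(\omega)^2$ (which trivially meets every $[0^m]$) and for the auxiliary dense sets an enumeration of this family. This produces a $\emptyset^{(n)}$‑computable $x\in(\omega)^\omega$ with $(x)^2\subseteq\bigcap_k U_k\cap\bigcap_{i,k}D_{i,k}$. For any $p\in(x)^2$ and any $k$ we then have $p\in U_k\cap\bigcap_i D_{i,k}$, hence $p\notin C_k$; so $p\notin\bigcup_k C_k=R$, i.e. $p\in\overline R$. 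Thus $x$ is a $\emptyset^{(n)}$‑computable partition homogeneous for $\overline R$.

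\textbf{Case B: some $U_{k_0}$ is not dense.} Using $\emptyset^{(n+1)}$ one may locate such a $k_0$ together with $\beta_0\in(\omega)^2_{\text{fin}}$ with $[\beta_0]\cap U_{k_0}=\emptyset$. Since $U_{k_0}\cup V_{k_0}$ is dense and $U_{k_0}$ avoids $[\beta_0]$, $V_{k_0}$ is dense below $[\beta_0]$, and any $p\in[\beta_0]\cap\bigcap_i D_{i,k_0}$ lying in $U_{k_0}\cup V_{k_0}$ must lie in $V_{k_0}$, hence in $C_{k_0}\subseteq R$. The plan is to build a $\emptyset^{(n+1)}$‑computable $x\in(\omega)^\omega$ all of whose $2$‑coarsenings lie in $[\beta_0]\cap V_{k_0}\cap\bigcap_i D_{i,k_0}$, which yields $(x)^2\subseteq C_{k_0}\subseteq R$; this calls for a version of the construction of Theorem~\ref{thm:sigma1_w_density} carried out ``below $[\beta_0]$'', using the $\emptyset^{(n+1)}$‑computable code $V_{k_0}$ (dense below $[\beta_0]$) in place of the ambient open set and the $D_{i,k_0}$ for the auxiliary dense sets.

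\textbf{Main obstacle.} The crux is exactly this last construction. One cannot simply demand $(x)^2\subseteq[\beta_0]$: if $\beta_0$ contains a $1$, then for every infinite partition $x$ there are $2$‑coarsenings that merge the $1$‑block of $\beta_0$ into the $0$‑block and so escape $[\beta_0]$, whereas the searches in Theorem~\ref{thm:sigma1_w_density} (``start a new block'') require density of $V_{k_0}$ below strings of the form $0^m$. The work is to get around this—e.g.\ by first reducing to the situation where the obstruction is witnessed at a clopen set of the special form $[0^m]$ (into which one \emph{can} build, by forcing $\{0,\dots,m-1\}\subseteq B_0^x$, exactly as in Case~1 of Theorem~\ref{thm:sigma1}), or by iterating the dichotomy down the finitely many levels from $\Sigma^0_{n+2}$ to $\Sigma^0_1$, with the obstructions encountered along the way being absorbed into the block structure of the partition and Theorem~\ref{thm:sigma1} applied at the bottom level. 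The remaining bookkeeping—passing between $\Sigma^0_m$ predicates, codes for open sets that are c.e.\ in $\emptyset^{(m-1)}$, and $\emptyset^{(m-1)}$‑computable sets of strings (as in Section~\ref{sec:Borel3})—is routine.
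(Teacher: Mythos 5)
Your overall plan is the right one (decompose via Proposition~\ref{prop:baire_degrees}, case split, feed the pieces into a relativized Theorem~\ref{thm:sigma1_w_density}), and you have correctly located the pressure point. But the dichotomy you run — ``all $U_k$ dense in $(\omega)^2$'' versus ``some $U_{k_0}$ fails to be dense somewhere'' — is not the right one, and the failure is not merely a matter of bookkeeping around the cylinder $[\beta_0]$. As you note, the Case~B side cannot be closed as stated: if $\beta_0$ contains a $1$ then no infinite partition $x$ satisfies $(x)^2\subseteq[\beta_0]$, so there is no homogeneous partition living below $\beta_0$ to construct. And Case~A is also too narrow: it misses the situation where every $U_k$ is dense only inside some $[0^\ell]$ but not on all of $(\omega)^2$, which should still land you in the $\emptyset^{(n)}$-computable-solution case, not the $\emptyset^{(n+1)}$ case.

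The paper's fix is exactly the ``reduce to a $[0^m]$-obstruction'' option you gesture at, but it is realized by going one more rung up the ladder of Proposition~\ref{prop:baire_degrees}: form $U=\bigcup_k V_k$ and $V=\bigcup\{[\sigma]:\forall k\ U_k\text{ is dense in }[\sigma]\}$ together with the combined dense sets $D_i$, and split on whether $V$ is dense in some $[0^\ell]$. If yes, then $U$ misses $[0^\ell]$, so every $U_k$ is dense in $[0^\ell]$, and one applies Theorem~\ref{thm:sigma1_w_density} relative to $\emptyset^{(n)}$ with ambient open set $[0^\ell]$ (which does meet every $[0^m]$) and the $\emptyset^{(n)}$-computable dense family $\{U_k\}\cup\{D_{i,k}\}$; that gives a $\emptyset^{(n)}$-computable $x$ homogeneous for $\overline R$. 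If no, then $U\cup V$ being dense forces $U\cap[0^m]\neq\emptyset$ for every $m$, so one applies Theorem~\ref{thm:sigma1_w_density} relative to $\emptyset^{(n+1)}$ with ambient open set $U$ (now $\emptyset^{(n+1)}$-computable) and the $D_i$; that gives a $\emptyset^{(n+1)}$-computable $x$ homogeneous for $R$. The point your proposal is missing is that the alternation between the two complexity levels must be diagnosed along the chain $[0^0]\supseteq[0^1]\supseteq\cdots$ rather than by testing density of the individual $U_k$'s, and that testing this requires the next-level Baire code $V$ — this is precisely what makes both branches of the dichotomy land on an open set that meets every $[0^m]$, which is the hypothesis Theorem~\ref{thm:sigma1_w_density} actually needs.
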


\begin{proof}
Fix $R$ and fix a $\Pi^0_{n+1}$ predicate $P(k,y)$ such that for $y \in (\omega)^2$, 
$y \in R \Leftrightarrow \exists k \, P(k,y)$. Let $U_k$, $V_k$ and 
$\langle D_{i,k} : i \in \omega \rangle$ be the codes from Proposition \ref{prop:baire_degrees} for $R_k = \{ y : \neg P(y,k) \}$. Let 
$U = \cup_k V_k$, $V = \cup \{ [\sigma] : \forall k \ U_k \text{ is dense in } [\sigma]\}$ and $D_i$, $i \in \omega$, be the corresponding 
codes for $R$. We split non-uniformly into cases.

{\bf Case 1:} Assume $V$ is dense in $[0^\ell]$ for some fixed $\ell$. We make two observations. First, $U$ is disjoint from $[0^\ell]$. Therefore, each 
$V_k$ is disjoint from $[0^\ell]$ and hence each $U_k$ is dense in $[0^\ell]$. Second, suppose $y \in (\bigcap_{i,k} D_{i,k}) \cap (\bigcap_{k} U_k)$. For each $k$ we have 
$y \in \cap_i D_{i,k}$ and $y \in U_k$, so $\forall k \, \neg P(k,y)$ holds and hence $y \in \overline{R}$.

We apply Theorem \ref{thm:sigma1_w_density} relativized to $\emptyset^{(n)}$ to the computable open set $O = [0^\ell]$ (which has nonempty intersection with $[0^j]$ for 
every $j$) and the $\emptyset^{(n)}$-computable sequence of codes $D_{i,k}$ and $U_k$ for $i,k < \omega$. By the first observation, each coded set in this sequence is dense in 
$O$. Therefore, there is a $\emptyset^{(n)}$-computable $x \in (\omega)^{\omega}$ such that $(x)^2 \subseteq [0^{\ell}] \cap (\bigcap_{i,k} D_{i,k}) \cap 
(\bigcap_{k} U_k)$. By the second observation, $(x)^2 \subseteq \overline{R}$ as required. 
 
 {\bf Case 2:} Assume $V$ is not dense in $[0^m]$ for any $m$. In this case, since $U \cup V$ is dense, we have $U \cap [0^m] \neq \emptyset$ for all $m$. 
 We apply Theorem \ref{thm:sigma1_w_density} relativized to $\emptyset^{(n+1)}$ to the $\emptyset^{(n+1)}$-computable open set $U$ and the 
 $\emptyset^{(n+1)}$-computable sequence of dense sets $D_i$ for $i \in \omega$ to obtain an $\emptyset^{(n+1)}$-computable $x$ with 
 $(x)^2 \subseteq U \cap (\bigcap_{i} D_i) \subseteq R$ as required.  
\end{proof}

We end this section by showing that the non-uniformity in the proof of Theorem \ref{thm:sigma1} is necessary.

\begin{thm}
\label{thm:non_uniform}
For every Turing functional $\Delta$, there are computable codes $R_0$ and $R_1$ for complementary open sets in $(\omega)^2$ such
that $\Delta^{R_0 \oplus R_1}$ is not an infinite homogeneous
partition for the reduced coloring $(\omega)^2 = R_0 \cup R_1$.
\end{thm}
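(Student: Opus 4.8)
The plan is a Kleene--Post style finite-extension diagonalization, carried out after translating reduced open colorings of $(\omega)^2$ into $2$-colorings of $\omega$. As in the proofs of Propositions \ref{prop:2.2} and \ref{prop:2ll_computable}, a reduced open coloring $(\omega)^2 = R_0 \cup R_1$ whose colors are complementary is the same data as a function $c \colon \{n : n \geq 1\} \to 2$: put $R_i = \{\langle 0, 0^n 1\rangle : n \geq 1 \text{ and } c(n) = i\}$, so that for $p \in (\omega)^2$ we have $p \in R_i$ iff $c(\mu^p(1)) = i$; the two codes then code complementary (clopen) sets, $R_i$ is computable iff $c$ is, and $R_0 \oplus R_1$ is obtained uniformly computably from $c$, with each oracle query either trivially answered or asking, for an $m$ we can compute from the query, whether $c(m) = 0$. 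Moreover $p \in (\omega)^\omega$ is homogeneous for this coloring iff $c$ is constant on the set $\{\mu^p(j) : j \geq 1\}$ of block-minima of $p$. So it suffices, given $\Delta$, to build a computable $c$ such that $\Delta^{R_0 \oplus R_1}$ is not a member of $(\omega)^\omega$ all of whose block-minima receive the same $c$-value.

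The construction rests on one observation: if the partition $p = \Delta^{R_0 \oplus R_1}$ has its first nonzero output at position $a$ (so $\mu^p(1) = a$) with $c(a) = 0$, while $c(m) = 1$ for all large $m$, then $p$ cannot be homogeneous, since every element of $(\omega)^\omega$ has $\mu^p(j) \to \infty$ and so some block-minimum lies where $c = 1$, contradicting $c(\mu^p(1)) = 0$. I would therefore run $\Delta$ against the oracle $X = R_0 \oplus R_1$ that I build on the fly, computing $\Delta^X(0), \Delta^X(1), \dots$ in order and settling each oracle query the first time it is made. While no first ``$1$'' has yet been output I answer every nontrivial query about a value $c(m)$ by setting $c(m) = 0$; as soon as the output so far equals $0^a 1$ for some $a$, I choose a threshold $M$ larger than $a$ and larger than every number whose $c$-value has already been committed, set $c(m) = 0$ for the remaining undetermined $m < M$ and $c(m) = 1$ for every undetermined $m \geq M$ (answering all later queries by this rule), and change nothing afterward. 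To force $c$ to be total I also add the clause: at step $s$ of the simulation, if no first ``$1$'' has appeared and $c(s)$ is still undetermined, set $c(s) = 0$. This makes $c$ a total computable function.

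For the verification, if a first ``$1$'' never appears then $\Delta^X$ is not an ordered surjection onto $\omega$ (it is not total, or outputs a value $\geq 2$ before any $1$, or is everywhere $0$), so $\Delta^X \notin (\omega)^\omega$, and here $c \equiv 0$ is computable. If the output $0^a 1$ does appear, at simulation step $s_1$, then $s_1 > a$ and $M > a$, while every $c$-value committed before step $s_1$ equals $0$ at a position $< s_1 \leq M$, so there is no conflict; hence $c$ is total computable with $c(a) = 0$ and $c(m) = 1$ for all $m \geq M$. If $\Delta^X \in (\omega)^\omega$, then (its first $a{+}1$ outputs being final) $\mu^{\Delta^X}(1) = a$, so homogeneity would force color $c(a) = 0$, yet some block-minimum $\mu^{\Delta^X}(j) \geq M$ has $c$-value $1$; so $\Delta^X$ is not homogeneous. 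In all cases $\Delta^{R_0 \oplus R_1}$ fails to be an infinite homogeneous partition for the computable, reduced, complementary open coloring $(\omega)^2 = R_0 \cup R_1$. The step I expect to be the main obstacle is exactly the tension between the unbounded wait for $\Delta$ to commit a block and the requirement that $c$ be genuinely computable (a naive argument yields only $c \leq_T \emptyset'$); the resolution is the bookkeeping clause together with choosing the switch-over threshold $M$ past every already-decided position, so that the eventual switch overwrites nothing while $c$ has meanwhile been decided on an initial segment at every stage.
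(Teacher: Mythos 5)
Your proposal is correct and matches the paper's proof in its essential structure: run $\Delta$, wait for it to commit the position $\mu^p(1)$, and then arrange the coloring so that block-minima beyond that point receive the opposite color. The differences (filling waiting-phase positions with all $0$s and flipping past a sliding threshold $M$, versus the paper's alternating $0/1$ fill on $\{0^{2s+1}1, 0^{2s+2}1\}$ and flipping everything past the stage at which diagonalization triggers) are cosmetic bookkeeping choices.
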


\begin{proof}
Fix $\Delta$. We define $R_0$ and $R_1$ in stages as $R_{0,s}$ and
$R_{1,s}$.  Our construction proceeds in a basic module while we wait
for $\Delta_s^{R_{0,s} \oplus R_{1,s}}$ to provide appropriate
computations. If these computations appear, we immediately
diagonalize and complete the construction.

For the basic module at stage $s$, put
$0^{2s+1}1 \in R_{0,s}$ and $0^{2s+2}1 \in R_{1,s}$.
Check whether there is a $0 < k < s$ such that $\Delta_{s}^{R_{0,s}
  \oplus R_{1,s}}(i) = 0$ for all $i < k$ and $\Delta_{s}^{R_{0,s}
  \oplus R_{1,s}}(k) = 1$. If there is no such $k$, then we proceed to
stage $s+1$ and continue with the basic module.

If there is such a $k$, then we stop the basic module and fix $i < 2$
such that $0^k1 \in R_{i,s}$. (Since $k < s$, we have already
enumerated $0^k1$ into one of $R_{0,s}$ or $R_{1,s}$ depending on
whether $k$ is even or odd.) We end the construction at this stage and define $R_i =
R_{i,s}$ and $R_{1-i} = R_{1-i,s} \cup \{ 0^t1 \mid 2s+2 < t \}$.

This completes the construction. It is clear that $R_0$ and $R_1$ are computable codes
for complementary open sets and $(\omega)^2 = R_0 \cup R_1$ is a reduced coloring. If the
construction never finds an appropriate value $k$, then $\Delta^{R_0 \oplus
  R_1}$ is not an element of $(\omega)^\omega$ and we are
done. Therefore, assume we find an appropriate value $k$ at stage
$s$ in the construction.  Fix $i$ such that $0^k1 \in R_{i,s}$ and
assume that $p = \Delta^{R_0 \oplus R_1}$ is a element of
$(\omega)^\omega$. We show $p$ is not homogeneous by giving elements $q_0, q_1 \in (p)^2$ such that
$q_0 \in R_i$ and $q_1 \in R_{1-i}$.

By construction, $0^k 1 \prec p$. Let $q_0
  \in (p)^2$ be any coarsening with $0^k1 \prec q_0$
  Then $q_0 \in R_i$ because $[0^k1] \subseteq R_i$.

On the other hand, since $p \in (\omega)^{\omega}$, there are
infinitely many $p$-blocks. Let $n$ be least with $\mu^p(n) > 2s+2$. Let $q_1 \in (p)^2$ be any coarsening for which 
$q_1 \in [0^{\mu^p(n)}1]$. Since $\mu^p(n) > 2s+2$, we put $0^{\mu^p(n)}1 \in R_{1-i}$, so $q_1 \in R_{1-i}$ 
as required. 
\end{proof}

\subsection{Strong reductions for reduced colorings}\label{sec:weihrauch}

In this section, we think of $\BoDRT^2_2$ as an \emph{instance-solution problem}. Such a problem consists of a collection of subsets of 
$\omega$ called the \emph{instances} of this problem, and for each instance, a collection of subsets of $\omega$ called the \emph{solutions} to this instance 
(for this problem). A problem $\mathsf{P}$ is \emph{strongly Weihrauch reducible} to a problem $\mathsf{Q}$ if there are fixed Turing functionals $\Phi$ and $\Psi$ such 
that given any instance $A$ of $\mathsf{P}$, $\Phi^A$ is an instance of $\mathsf{Q}$, and given any solution $B$ to $\Phi^A$ in $\mathsf{Q}$, $\Psi^B$ is a solution to 
$A$ in $\mathsf{P}$. There are a number of variations on this reducibility and we refer to the reader to \cite{Dzhafarov-2015ta} and \cite{HJ-2015ta} for background on 
these reductions and for connections to reverse mathematics. In this paper, we will only be interested in problems arising out of $\Pi^1_2$ statements of second order 
arithmetic. Any such statement can be put in the form $\forall X(\varphi(X) \to \exists Y \psi(X,Y))$, where $\phi$ and $\psi$ are arithmetical. We can then regard this as a 
problem, with instances being all $X$ such that $\varphi(X)$, and the solutions to $X$ being all $Y$ such that $\psi(X,Y)$. Note that while the choice of $\varphi$ and $\psi$ 
is not unique, we always have a fixed such choice in mind for a given $\Pi^1_2$ statement, and so also a fixed assignment of instances and solutions.

A reduced coloring $(\omega)^2 = R_0 \cup R_1$ is classically open and the color of $p \in (\omega)^2$ depends only on $\mu^p(1)$. When $R_0$ and $R_1$ are 
codes for open sets, there is a homogeneous partition computable in $R_0 \oplus R_1$, although by Theorem \ref{thm:non_uniform}, 
not uniformly. We consider the case when the open sets $R_0$ and $R_1$ are represented by Borel codes for $\Sigma^0_n$ sets with $n \geq 2$. 

$\Delta^0_n$-$\mathsf{rDRT}^2_2$ is the statement that for each reduced coloring $(\omega)^2 = R_0 \cup R_1$ where $R_0$ and $R_1$ are Borel codes 
for $\Sigma^0_n$ sets, there exists an $x \in (\omega)^\omega$ and an $i < 2$ such that $(x)^2 \subseteq R_i$. In effective algebra, this statement is 
clear, but in $\RCA$, we need to specify how to handle these codes. 

Recall that a Borel code for a $\mathbf \Sigma^0_n$ set is a labeled subtree of $\omega^{< n+1}$ which we write as $(B,\varphi)$ to specify the labeling function $\varphi$. 
For a leaf $\sigma$ and a partition $p$, we write $p \in \varphi(\sigma)$ if $p$ is an element of the clopen set coded by $\varphi(\sigma)$, and we write $\varphi(\sigma) = [\tau]$ to avoid specifying a coding scheme. 

In reverse mathematics there are two ways that membership in a $\mathbf \Sigma^0_\alpha$
set could be discussed.  The \emph{evaluation map} method works for 
arbitrary $\alpha$ and requires a strong base theory.  This method will 
be discussed in the next section.  The \emph{virtual}
method works only for finite $\alpha$.  For each $n<\omega$, there is a 
fixed $\Sigma^0_n$ formula $\eta(B,\varphi, p)$ such that if $(B,\varphi)$ is a Borel code for a $\mathbf \Sigma^0_n$ set and $p \in (\omega)^2$, then $\eta(B,\varphi,p)$ says $p$ is in the 
set coded by $(B,\varphi)$.  In this section we use only the virtual method.

The formula is defined as follows.  We begin by defining formulas $\beta_k(\sigma, B, \varphi, p)$ for $1 \leq k \leq n$ by downward induction on $k$. For $\sigma \in B$ with $|\sigma| = k$, 
$\beta_k(\sigma, B, \varphi, p)$ says that $p$ is in the set coded by the labeled subtree of $(B,\varphi)$ above $\sigma$. Since any $\sigma \in B$ 
with $|\sigma| = n$ is a leaf, $\beta_n(\sigma, B, \varphi, p)$ is the formula $p \in \varphi(\sigma)$. For $1 \leq k < n$, $\beta_k(\sigma, B, \varphi, p)$ is the formula  
\begin{gather*}
(\varphi(\sigma) = \cup \rightarrow \alpha_{k}^{\cup}) \wedge (\varphi(\sigma) = \cap \rightarrow \alpha_{k}^{\cap}) \wedge (\varphi(\sigma) \in L \rightarrow \alpha_{k}^L), \text{ where} \\
\alpha_{k}^{\cup}(\sigma,B, \varphi, p) \text{ is } \exists \tau \in B \big( \sigma \prec \tau \wedge |\tau|=k+1 \wedge \beta_{k+1}(\tau,B,\varphi,p) \big) \\
\alpha_{k}^{\cap}(\sigma, B, \varphi, p) \text{ is } \forall \tau \in B \big( (\sigma \prec \tau \wedge |\tau|=k+1) \rightarrow \beta_{k+1}(\tau,B,\varphi,p) \big) \\
\text{and }\alpha_{k}^L(\sigma, B, \varphi,p) \text{ is } p \in \varphi(\sigma).
\end{gather*}
The formula $\eta(B, \varphi, p)$ is $\exists \sigma \in B \, (|\sigma| = 1 \wedge \beta_1(\sigma, B, \varphi, p))$. In $\RCA$, we write $p \in B$ for $\eta(B,\varphi,p)$. The statement 
$\Delta^0_n$-$\mathsf{rDRT^2_2}$ now has the obvious translation in $\RCA$.

A Borel code $(B, \varphi)$ for a $\mathbf \Sigma^0_n$ set is \textit{in normal form} if $B = \omega^{< n+1}$ and for every $\sigma$ 
with $|\sigma| < n$, if $|\sigma|$ is even, then $\varphi(\sigma) = \cup$, and if $|\sigma|$ is odd, then $\varphi(\sigma) = \cap$. In $\RCA$, for every $(B,\varphi)$, there is a 
$(\widehat{B}, \widehat{\varphi})$ in normal form such that for all $p \in (\omega)^2$, $p \in B$ if and only if $p \in \widehat{B}$. Moreover, 
the transformation from $(B,\varphi)$ to $(\widehat{B}, \widehat{\varphi})$ is uniformly computable in $(B,\varphi)$. We describe the transformation when $(B,\varphi)$ is a Borel code for a 
$\Sigma^0_2$ set. The case for a $\Sigma^0_n$ set is similar. 

Let $(B,\varphi)$ be a Borel code for a $\Sigma^0_2$ set. By definition, $\lambda \in B$ with $\varphi(\lambda) = \cup$. Each $\sigma \in B$ with $|\sigma| = 1$ is the root of a subtree 
coding a $\Sigma^0_0$ set (if $\varphi(\sigma) \in L$), a $\Sigma^0_1$ set (if $\varphi(\sigma) = \cup$) or a $\Pi^0_1$ set (if $\varphi(\sigma) = \cap$). 
Consider the following sequence of transformations. 
\begin{itemize}
\item To form $(B_1, \varphi_1)$, for each $\sigma \in B$ with $|\sigma| = 1$ and 
$\varphi(\sigma) = \cup$, remove the subtree of $B$ above $\sigma$ (including $\sigma$). For each $\tau \in B$ with $\tau \succ \sigma$, add a new node $\tau'$ to $B_1$ with $|\tau'|=1$ and 
$\varphi_1(\tau') = \varphi(\tau) \in L$. 

\item To form $(B_2,\varphi_2)$, for each leaf $\sigma \in B_1$ with $|\sigma| = 1$, relabel $\sigma$ by $\varphi_2(\sigma) = \cap$ and add a new successor $\tau$ to $\sigma$ with 
label $\varphi_2(\tau) = \varphi_1(\sigma)$. 

\item To form $(B_3, \varphi_3)$, for each $\sigma \in B_2$ with $|\sigma| = 1$, let $\tau_{\sigma} \in B_1$ be the first successor of $\sigma$. Add infinite many new nodes 
$\delta \succ \sigma$ to $B_3$ with $\varphi_3(\delta) = \varphi_2(\tau_{\sigma})$. 

\item To form $(B_4, \varphi_4)$, let $\sigma$ be the first node of $B_3$ at level 1. Add infinitely many copies of the subtree above $\sigma$ to $B_4$ with 
the same labels as in $B_3$. 
\end{itemize}
In $(B_4,\varphi_4)$, the leaves are at level 2, every interior node is infinitely branching and $\varphi_4(\sigma) = \cap$ when $|\sigma| = 1$. 
There is a uniform procedure to define a bijection $f:B_4 \rightarrow \omega^{< 3}$. We define $(\widehat{B}, \widehat{\varphi})$ by $\widehat{B} = \omega^{< 3}$ and 
$\widehat{\varphi}(\sigma) = \varphi_4(f^{-1}(\sigma))$. In $\RCA$, for all $p \in (\omega)^2$, 
$\eta(B,\varphi,p)$ holds if and only if $\eta(\widehat{B}, \widehat{\varphi},p)$ holds.

When $(B,\varphi)$ is a Borel code for a $\Sigma^0_n$ set in normal form, $\eta(B,\varphi,p)$ is equivalent to 
$\exists x_0 \, \forall x_1 \cdots \mathsf{Q}_{n-1} x_{n-1} \, (p \in \varphi(\langle x_0, x_1, \ldots, x_{n-1} \rangle))$ where $\mathsf{Q}_{n-1}$ is $\forall$ or $\exists$ depending on whether  
$n-1$ is odd or even. We have analogous definitions for Borel codes for $\Pi^0_n$ sets in normal form. 

To define $\mathsf{D}^n_2$, let $[\omega]^n$ denote the set of $n$ element subsets of $\omega$. We view the elements of $[\omega]^n$ as strictly increasing sequences 
$s_0 < s_1 < \cdots < s_{n-1}$. 

\begin{defn}
A coloring $c:[\omega]^n \rightarrow 2$ is \textit{stable} if for all $k$, the limit 
\[
\lim_{s_1} \cdots \lim_{s_{n-1}}~c(k,s_1,\ldots,s_{n-1})
\]
exists. $L \subseteq \omega$ is \textit{limit-homogeneous} for a stable coloring $c$ if there is an $i < 2$ such that for each $k \in L$, 
\[
\lim_{s_1} \cdots \lim_{s_{n-1}}~c(k,s_1,\ldots,s_{n-1}) = i.
\]
$\mathsf{D}^n_2$ is the statement that each stable coloring $c : [\omega]^n \to 2$ has an infinite limit-homogeneous set. 
\end{defn}

Below, the proof of Theorem \ref{thm:sW}(2) is a formalization of the proof of Theorem \ref{thm:sW}(1), and the additional induction used is a consequence of this formalization. 
We do not know if its use is necessary; that is, we do now if $\RCA + \mathsf{I}\Sigma^0_{n-1}$ can be replaced simply by $\RCA$ when $n > 2$. 

\begin{thm}
\label{thm:sW}
Fix $n \geq 2$.
	\begin{enumerate}
		\item $\Delta^0_n$-$\mathsf{rDRT}^2_2 \equiv_{\mathrm{sW}} \mathsf{D}^n_2$.
		\item Over $\RCA + \mathsf{I}\Sigma^0_{n-1}$, $\Delta^0_n$-$\mathsf{rDRT}^2_2$ is equivalent to $\mathsf{D}^n_2$. 
	\end{enumerate}
\end{thm}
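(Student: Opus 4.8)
The plan is to recognize that, once the data is unwound, both $\Delta^0_n$-$\mathsf{rDRT}^2_2$ and $\mathsf{D}^n_2$ are the \emph{same} problem --- find an infinite monochromatic set for a $\Delta^0_n$ two-colouring of $\omega$ --- presented in two different ways, so that the whole theorem reduces to translating between the two presentations with enough uniformity, and for part (2) with enough control on the induction the translation costs.

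First I would record the dictionary. A reduced colouring $(\omega)^2 = R_0 \cup R_1$ is determined by the function $g$ on $\{m \geq 1\}$ with $g(m) = i$ iff the partition $p_m \in (\omega)^2$ having $\mu^{p_m}(1) = m$ lies in $R_i$; since ``$p_m \in R_i$'' is just $\eta$ evaluated at the point $p_m$, whose membership in each basic clopen set is decided by a fixed finite initial segment, and since $R_0, R_1$ are both given by $\Sigma^0_n$ Borel codes (being complementary), $g$ is $\Delta^0_n$, uniformly in the codes. A partition $x \in (\omega)^\omega$ is homogeneous for this colouring exactly when $g$ is constant on $S_x := \{\mu^x(k) : k \geq 1\}$, and every infinite $S \subseteq \{m \geq 1\}$ is $S_x$ for the partition $x$ whose blocks are the half-open intervals with left endpoints the elements of $S$. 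Dually, a stable $c \colon [\omega]^n \to 2$ is determined by its limit-colour function $\hat g(k) = \lim_{s_1}\cdots\lim_{s_{n-1}} c(k, s_1, \dots, s_{n-1})$; an infinite set is limit-homogeneous for $c$ iff $\hat g$ is constant on it; and stability makes $\hat g$ a $\Delta^0_n$ function, \emph{uniformly}. This last point requires the only mildly delicate observation in the dictionary: to express ``$\hat g(k) = b$'' as a genuine $\Sigma^0_n$ (not $\Sigma^0_{2(n-1)}$) formula in $c$, one peels the nested limits one at a time, and at the $j$-th level expresses the corresponding ``$\hat g_j = b$'' in $\Sigma$-form or in $\Pi$-form --- whichever merges with the quantifier block just produced --- which is legitimate because stability forces the relevant limit to exist, so each of its two possible values has both a $\Sigma$ and a $\Pi$ description; the alternating choices collapse $2(n-1)$ quantifiers into $n$ blocks.

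With the dictionary the strong Weihrauch reductions of part (1) are short. For $\mathsf{D}^n_2 \leq_{\mathrm{sW}} \Delta^0_n$-$\mathsf{rDRT}^2_2$: the forward functional reads $c$ and assembles, directly from the uniform $\Sigma^0_n$ definitions of $\{m : \hat g(m) = 0\}$ and $\{m : \hat g(m) = 1\}$, two $\Sigma^0_n$ Borel codes for $R_0 = \{p : \hat g(\mu^p(1)) = 0\}$ and $R_1 = \{p : \hat g(\mu^p(1)) = 1\}$; this is a reduced colouring, and it is a colouring of $(\omega)^2$ (the two colours cover) precisely because $c$ is stable, which is the promise on the instance. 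The backward functional sends a homogeneous $x$ to $S_x$, an infinite limit-homogeneous set for $c$. For $\Delta^0_n$-$\mathsf{rDRT}^2_2 \leq_{\mathrm{sW}} \mathsf{D}^n_2$: first put both input Borel codes into normal form (a uniformly computable transformation, as described before the statement), so that $m \in A := \{m : p_m \in R_0\}$ and $m \in \overline A$ are given by explicit $\Sigma^0_n$ prenex formulas with decidable matrices; then build, uniformly, a computable $c \colon [\omega]^n \to 2$ with $\hat g = \chi_A$ by the standard representation of a $\Delta^0_n$ set as $n-1$ nested limits of a computable function with all limits existing --- recursively, at each level one runs a race between the current $\Sigma^0_n$-witness for $A$ and the current one for $\overline A$, the innermost level being decidable. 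The witness race converges because exactly one of $A, \overline A$ has a genuine witness, which is the only place complementarity of $R_0, R_1$ is used. The backward functional sends a limit-homogeneous $S$ to the partition $x$ whose blocks are the half-open intervals with left endpoints the elements of $S \setminus \{0\}$: every coarsening in $(x)^2$ has second block beginning at some $s \in S$, and $\hat g$ constant on $S$ forces $(x)^2$ entirely inside $R_0$ or entirely inside $R_1$.

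For part (2), the reductions are witnessed by fixed Turing functionals, so they yield the corresponding implications over any base theory that can (i) carry out the two constructions and (ii) verify that the forward functional sends instances to instances. Both constructions go through in $\RCA$; the one place real induction enters is verifying that the constructed $c$ is stable (and, in the other direction, that the two constructed $\Sigma^0_n$ codes really cover $(\omega)^2$). The statement ``for every $m$, all the nested limits defining $\hat g(m)$ exist'' is proved level by level by the witness-race argument, and pushing it from level $j$ to level $j+1$ consumes $\Sigma^0_{j+1}$-induction, so the whole verification sits inside $\mathsf{I}\Sigma^0_{n-1}$; hence over $\RCA + \mathsf{I}\Sigma^0_{n-1}$ the two principles are equivalent. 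The hard part --- really the only part with content --- is exactly this ``$\Delta^0_n$ set $\leftrightarrow$ $n-1$ nested limits'' translation carried out \emph{uniformly} in both directions: producing a genuinely $\Sigma^0_n$ formula for the values of a stable limit-colour function (the $\Sigma$/$\Pi$ alternation), producing a provably stable computable colouring from a $\Delta^0_n$ predicate (the witness race), and tracking the induction the latter costs. The combinatorial dictionary between reduced two-block colourings and two-colourings of $\omega$ is then routine.
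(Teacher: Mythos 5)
Your proposal is correct and follows essentially the same route as the paper: the forward translation from a stable colouring to complementary $\Sigma^0_n$ Borel codes is the alternating $\Sigma/\Pi$ quantifier collapse, the backward translation is the bounded witness race producing a stable $c$, and $\mathsf{I}\Sigma^0_{n-1}$ accounts for the bounding and minimization that the stability verification needs. The one small slip is the parenthetical suggesting that verifying coverage of $(\omega)^2$ in the other direction also costs induction --- the paper explicitly notes that no extra induction is needed for the implication from $\Delta^0_n$-$\mathsf{rDRT}^2_2$ to $\mathsf{D}^n_2$.
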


\begin{cor}
$\Delta^0_2$-$\mathsf{rDRT}^2_2$ is equivalent to $\mathsf{SRT}^2_2$ over $\RCA$.
\end{cor}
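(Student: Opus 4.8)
The plan is to obtain this as an immediate consequence of Theorem~\ref{thm:sW}(2) combined with the Chong--Lempp--Yang characterization of $\mathsf{D}^2_2$ quoted in the introduction. First I would instantiate Theorem~\ref{thm:sW}(2) at $n=2$: it states that over $\RCA + \mathsf{I}\Sigma^0_1$, the principle $\Delta^0_2$-$\mathsf{rDRT}^2_2$ is equivalent to $\mathsf{D}^2_2$. The key observation making the $n=2$ case cleaner than the general one is that $\mathsf{I}\Sigma^0_1$ is already a theorem of $\RCA$, so $\RCA + \mathsf{I}\Sigma^0_1$ and $\RCA$ have the same theorems; hence $\Delta^0_2$-$\mathsf{rDRT}^2_2$ is equivalent to $\mathsf{D}^2_2$ already over $\RCA$.

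Next I would invoke the theorem of Chong, Lempp and Yang \cite{ChongLemppYang} that $\mathsf{D}^2_2$ is equivalent to $\SRT^2_2$ over $\RCA$. Chaining the two equivalences yields that $\Delta^0_2$-$\mathsf{rDRT}^2_2$ is equivalent to $\SRT^2_2$ over $\RCA$, which is exactly the statement to be proved. (Note that Theorem~\ref{thm:sW}(1), the strong Weihrauch equivalence, is not what is needed here, since it does not by itself deliver an equivalence provable in $\RCA$; it is part~(2) that does the work.)

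Since both ingredients are already established---part~(2) of the preceding theorem and the cited Chong--Lempp--Yang result---there is essentially no obstacle to overcome. The only point deserving an explicit sentence in the write-up is the elementary observation that $\RCA$ proves $\mathsf{I}\Sigma^0_1$, so that the extra induction hypothesis appearing in Theorem~\ref{thm:sW}(2) is vacuous when $n=2$.
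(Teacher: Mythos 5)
Your proposal is correct and matches the paper's proof exactly: apply Theorem~\ref{thm:sW}(2) at $n=2$, note that the extra hypothesis $\mathsf{I}\Sigma^0_1$ is already part of $\RCA$, and then chain with the Chong--Lempp--Yang equivalence of $\mathsf{D}^2_2$ and $\SRT^2_2$. The paper's proof consists only of the Chong--Lempp--Yang citation, leaving the $n=2$ specialization and the vacuity of $\mathsf{I}\Sigma^0_1$ implicit, so you have simply made those steps explicit.
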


\begin{proof}
$\mathsf{D}^2_2$ is equivalent to $\mathsf{SRT}^2_2$ over $\RCA$ by Chong, Lempp, and Yang \cite{ChongLemppYang}.
\end{proof}

\begin{cor}
$\Delta^0_2$-$\mathsf{rDRT}^2_2 <_{\mathrm{sW}} \mathsf{SRT}^2_2$.
\end{cor}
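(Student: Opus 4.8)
The plan is to read off the statement from Theorem~\ref{thm:sW}(1), which identifies $\Delta^0_2$-$\mathsf{rDRT}^2_2$ with $\mathsf{D}^2_2$ up to strong Weihrauch equivalence, together with the strict separation $\mathsf{D}^2_2 <_{\mathrm{sW}} \mathsf{SRT}^2_2$. The easy half, $\mathsf{D}^2_2 \leq_{\mathrm{sW}} \mathsf{SRT}^2_2$, is witnessed by the pair of identity functionals: a stable $c\colon[\omega]^2\to 2$ is simultaneously an instance of both problems, and if $H$ is an infinite homogeneous set for $c$ with color $i$, then for each $k\in H$ we have $c(k,l)=i$ for every $l\in H$ with $l>k$, so, $H$ being infinite, $\lim_s c(k,s)=i$ and hence $H$ is limit-homogeneous for $c$. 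Composing this with the equivalence of Theorem~\ref{thm:sW}(1) gives $\Delta^0_2$-$\mathsf{rDRT}^2_2 \leq_{\mathrm{sW}} \mathsf{SRT}^2_2$.

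For the strictness it then suffices, again by Theorem~\ref{thm:sW}(1), to establish $\mathsf{SRT}^2_2 \not\leq_{\mathrm{sW}} \mathsf{D}^2_2$. The heuristic is that a solution to $\mathsf{SRT}^2_2$ --- an infinite homogeneous set $H$ --- determines its own homogeneity color already from $H$ and the coloring (evaluate the coloring on a pair from $H$), whereas a solution to $\mathsf{D}^2_2$ --- an infinite limit-homogeneous set $L$ --- does not: recovering the common limit value along $L$ costs one Turing jump. Thus passing from a limit-homogeneous set (for a coloring derived from $c$) back to a homogeneous set for $c$ cannot be done by a single fixed functional, since this would require extracting that jump-level bit uniformly. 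I would argue this by the standard diagonalization: assuming functionals $\Phi,\Psi$ witnessed $\mathsf{SRT}^2_2\leq_{\mathrm{sW}}\mathsf{D}^2_2$, one builds a computable stable coloring $c$ in stages; at each stage only a finite part of $c$, and hence of $\Phi^c$, is fixed, and one waits for $\Psi$, applied to an infinite limit-homogeneous set for $\Phi^c$ consistent with the current data, to commit to a homogeneity color $i$ for $c$; one then continues $c$ so that the eventual color of the limit-homogeneous set actually being fed to $\Psi$ is forced to be $1-i$, which is possible because only the ``tail'' behavior of $c$ is relevant, thereby making $\Psi$ commit infinitely often to the wrong color and so fail to produce a homogeneous set.

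The delicate point --- and the step I expect to be the real obstacle --- is maintaining coherence of this construction: since $\Phi^c$ depends on all of $c$, after altering the tail of $c$ one must still be able to name an infinite set that is limit-homogeneous for the \emph{new} $\Phi^c$ and on which the already-examined behavior of $\Psi$ is preserved, which forces a careful finite-injury bookkeeping. In the final version I would instead simply cite the strict separation $\mathsf{D}^2_2 <_{\mathrm{sW}}\mathsf{SRT}^2_2$ from the existing literature on strong reductions (the same body of work surveyed in \cite{Dzhafarov-2015ta,HJ-2015ta}); combined with Theorem~\ref{thm:sW}(1) and the $\RCA$-equivalence of $\mathsf{D}^2_2$ with $\mathsf{SRT}^2_2$ from \cite{ChongLemppYang}, this proves the corollary and illustrates that strong Weihrauch reducibility is strictly finer here than provable equivalence over $\RCA$.
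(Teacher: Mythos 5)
Your final proof is the same as the paper's: invoke Theorem~\ref{thm:sW}(1) to identify $\Delta^0_2$-$\mathsf{rDRT}^2_2$ with $\mathsf{D}^2_2$ up to $\equiv_{\mathrm{sW}}$, then cite the known strict separation $\mathsf{D}^2_2 <_{\mathrm{sW}} \mathsf{SRT}^2_2$ (the paper points to Dzhafarov \cite[Corollary~3.3]{Dzhafarov-2015ta}). Your forward reduction $\mathsf{D}^2_2 \leq_{\mathrm{sW}} \mathsf{SRT}^2_2$ via identity functionals and the diagonalization sketch for strictness are reasonable background, but since you rightly conclude by citing the literature, the argument is essentially the paper's.
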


\begin{proof}
$\mathsf{D}^2_2 <_{\mathrm{sW}} \SRT^2_2$ by Dzhafarov \cite[Corollary 3.3]{Dzhafarov-2015ta}. (It also 
follows immediately that $\Delta^0_2$-$\mathsf{rDRT}^2_2 \equiv_{\mathrm{W}} \mathsf{D}^2_2 <_{\mathrm{W}} \mathsf{SRT}^2_2$.)
\end{proof}

\begin{proof}[Proof of Theorem \ref{thm:sW}]
We prove the two parts simultaneously, remarking, where needed, how to formalize the argument in $\RCA+\mathsf{I}\Sigma^0_{n-1}$.
	
To show that $\Delta^0_n$-$\mathsf{rDRT}^2_2 \leq_{\mathrm{sW}} \mathsf{D}^n_2$, and that $\Delta^0_n$-$\mathsf{rDRT}^2_2$ is implied by $\mathsf{D}^n_2$ over 
$\mathsf{RCA}_0+\mathsf{I}\Sigma^0_{n-1}$, fix an instance $(\omega)^2 = R_0 \cup R_1$ of $\Delta^0_n$-$\mathsf{rDRT}^2_2$ where each $R_i$ is a Borel code for a 
$\Sigma^0_n$ set. Without loss of generality, $R_0$ and $R_1$ are in normal form. For each $k \geq 1$, fix the partition $p_k = \chi_{\{k\}}$ (that is, $p_k$ has blocks $\omega\setminus \{k\}$ and $\{k\}$).
 
For $m < n$, we let $R_i(t_0, \ldots, t_m)$ denote the Borel set coded by the subtree of $R_i$ above $\langle t_0, \ldots, t_m \rangle$. Since 
$\langle t_0, \ldots, t_{n-1} \rangle$ is a leaf, $R_i(t_0, \ldots, t_{n-1})$ is the clopen set $\varphi_i(\langle t_0, \ldots, t_{n-1} \rangle)$. If $m < n-1$, then 
$R_i(t_0, \ldots, t_m)$ is a code for a $\Sigma^0_{n-(m+1)}$ set (if $m$ is odd) or a $\Pi^0_{n-(m+1)}$ set (if $m$ is even) in normal form. 

We define a coloring $c : [\omega]^n \to 2$ as follows. Let $c(0, s_1, \ldots, s_{n-1}) = 0$ for all $s_1 < \cdots < s_{n-1}$. For $m \leq n$, let $\mathsf{Q}_m$ 
stand for $\exists$ or $\forall$, depending as $m$ is even or odd, respectively. Given $1 \leq k < s_1 < \ldots < s_{n-1}$, define
	\[
		c(k,s_1,\ldots,s_{n-1}) = 1
	\]
	if and only if there is a $t_0 \leq s_1$ such that 
	\[
	(\forall t_1 \leq s_1) \cdots (\mathsf{Q}_m t_m \leq s_m) \cdots (\mathsf{Q}_{n-1} t_{n-1} \leq s_{n-1})~p_k \in \varphi_0(\langle t_0, \ldots, t_{n-1} \rangle)
	\]
	and for which there is no $u_0 < t_0$ such that 
	\[
	(\forall u_1 \leq s_1) \cdots (\mathsf{Q}_m u_m \leq s_m) \cdots (\mathsf{Q}_{n-1} u_{n-1} \leq s_{n-1})~p_k \in \varphi_1(\langle u_0, \ldots u_{n-1} \rangle).
	\]
(Note that $s_1$ bounds $t_0$, $t_1$ and $u_1$, whereas the other $s_m$ bound only $t_m$ and $u_m$.) The coloring $c$ is uniformly computable in $(R_0, \varphi_0)$ 
and $(R_1,\varphi_1)$ and is definable in $\RCA$ as a total function since all the quantification is bounded.
	
We claim that for each $k \geq 1$,
	\[
		\lim_{s_1} \cdots \lim_{s_{n-1}} c(k,s_1,\ldots, s_{n-1})
	\]
exists. Furthermore, if this limit equals $1$, then $p_k \in R_0$, and if this limit equals $0$, then $p_k \in R_1$. We break this claim into two halves.
	
First, for $1 \leq m \leq n-1$, we claim that for all fixed $1 \leq k < s_1 < \ldots < s_m$,
	\[
		\lim_{s_{m+1}} \cdots \lim_{s_{n-1}} c(k,s_1,\ldots,s_m,s_{m+1},\ldots,s_{n-1})
	\]
exists, and the limit equals $1$ if and only if there is a $t_0 \leq s_1$ such that 
	\begin{equation}\label{E:simeq_pos}
		(\forall t_1 \leq s_1) \cdots (\mathsf{Q}_m t_m \leq s_m)~p_k \in R_0(t_0,\ldots,t_m)
	\end{equation}
and there is no $u_0 < t_0$ such that 
	\begin{equation}\label{E:simeq_neg}
		(\forall u_1 \leq s_1) \cdots (\mathsf{Q}_m u_m \leq s_m)~p_k \in R_1(u_0,\ldots,u_m).
	\end{equation}
The proof is by downward induction on $m$. (In $\RCA$, the induction is performed externally, so we do not need to consider its complexity.) 
For $m = n-1$, there are no limits involved and the values of $c$ are correct by definition. 

Assume the result is true for $m+1$ and we show it remains true for $m$. By the definition of $R_0(t_0, \ldots, t_m)$, $t_0$ satisfies \eqref{E:simeq_pos} if and only if 
	\[
	(\forall t_1 \leq s_1) \cdots (\mathsf{Q}_m t_m \leq s_m)(\mathsf{Q}_{m+1} t_{m+1})~p_k \in R_0(t_0,\ldots,t_m,t_{m+1}),
	\]
	which in turn holds if and only if there is a bound $v$ such that for all $s_{m+1} \geq v$,
	\[
	(\forall t_1 \leq s_1) \cdots (\mathsf{Q}_m t_m \leq s_m)(\mathsf{Q}_{m+1} t_{m+1} \leq s_{m+1})~p_k \in R_0(t_0,\ldots,t_m,t_{m+1}).
	\]
If $\mathsf{Q}_{m+1}$ is $\exists$, then over $\RCA$, this equivalence requires a bounding principle. Since $p_k \in R_0(t_0, \ldots, t_{m+1})$ 
is a $\Pi^0_{n-(m+2)}$ predicate and $m+2 \geq 3$, we need at most $\mathsf{B}\Pi^0_{n-3}$ which follows from $\mathsf{I}\Sigma^0_{n-1}$. An analogous 
analysis applies to numbers $u_0$ satisfying \eqref{E:simeq_neg}. Thus, we can fix a common bound $v$ that works for all $t_0 \leq s_1$ in \eqref{E:simeq_pos} and 
all $u_0 < t_0 \leq s_1$ in \eqref{E:simeq_neg}.

Suppose there is a $t_0 \leq s_1$ satisfying \eqref{E:simeq_pos} for which there is no $u_0 < t_0$ satisfying \eqref{E:simeq_neg}. Then, for all $s_{m+1} \geq v$, $t_0$ 
satisfies the version of \eqref{E:simeq_pos} for $m+1$, and there is no $u_0 < t_0$ satisfying the version of \eqref{E:simeq_neg} for $m+1$. Therefore, by induction 
\[
\exists v \forall s_{m+1} \geq v \big( \lim_{s_{m+2}} \cdots \lim_{s_{n-1}} c(k,s_1, \ldots, s_{n-1}) = 1 \big)
\]
and hence $\lim_{s_{m+1}} \cdots \lim_{s_{n-1}} c(k,s_1, \ldots, s_{n-1}) = 1$ as required. 

On the other hand, suppose that there is no $t_0 \leq s_1$ satisfying \eqref{E:simeq_pos}, or that for every $t_0 \leq s_1$ satisfying \eqref{E:simeq_pos}, there is a $u_0 < t_0$ 
satsifying \eqref{E:simeq_neg}. Then, for all $s_{m+1} \geq v$, we have the analogous condition for $m+1$ and the induction hypothesis gives 
$\lim_{s_{m+1}} \cdots \lim_{s_{n-1}} c(k,s_1, \ldots, s_{n-1}) = 0$. This completes the first part of the claim. 
	
We can now prove the rest of the claim. For each $k \geq 1$, we have $p_k \in R_0$ or $p_k \in R_1$. Let $t_0$ be least such that 
$p_k \in R_0(t_0)$ or $p_k \in R_1(t_0)$. Since $p_k \in R_i(t)$ is a $\Pi^0_{n-1}$ statement, we use $\mathsf{I}\Sigma^0_{n-1}$ to 
fix this value in $\RCA$. 

Suppose $p_k \in R_0(t_0)$, so for all $u_0 < t_0$, it is not the case that $p_k \in R_1(u_0)$. By the first half of the claim with $m=1$, we have for every $s_1 \geq t_0$
	\[
		\lim_{s_2} \cdots \lim_{s_{n-1}} c(k,s_1,s_2,\ldots,s_{n-1}) = 1,
	\]
and therefore $\lim_{s_1} \cdots \lim_{s_{n-1}} c(k,s_1,\ldots,s_{n-1}) = 1$. 

Suppose $p_k \not \in R_0(t_0)$, and hence $p_k \in R_1(t_0)$. Again, by the first half of the claim with $m=1$, we have for every $s_1 \geq t_0$
	\[
		\lim_{s_2} \cdots \lim_{s_{n-1}} c(k,s_1,s_2,\ldots,s_{n-1}) = 0,
	\]
so $\lim_{s_1} \cdots \lim_{s_{n-1}} c(k,s_1,\ldots,s_{n-1}) = 0$.  This completes the proof of the claim.
	
Since $c$ is an instance of $\mathsf{D}^n_2$, fix $i < 2$ and an infinite limit-homogeneous set $L$ for $c$ with color $i$. By the claim, 
$p_k \in R_{1-i}$ for all $k \in L$. List the non-zero elements of $L$ as $k_0 < k_1 < \cdots$, and let $p \in (\omega)^\omega$ be the partition whose blocks are $[0,k_0)$ and  
$[k_m,k_{m+1})$ for $m \in \omega$. Each $x \in (p)^2$ satisfies $\mu^x(1) = k_m$ for some $m$. Since $R_0 \cup R_1$ is a reduced coloring, 
$x$ and $p_{k_m}$ have the same color, which is $R_{1-i}$. Since $x$ was arbitrary, $(p)^2 \subseteq R_{1-i}$ as required to complete this half of the theorem.
	
Next, we show that $\mathsf{D}^n_2 \leq_{\mathrm{sW}} \Delta^0_n$-$\mathsf{rDRT}^2_2$, and that $\mathsf{D}^n_2$ is implied 
by $\Delta^0_n$-$\mathsf{rDRT}^2_2$ over $\mathsf{RCA}_0$. (No extra induction is necessary for this implication.) Fix an instance $c : [\omega]^n \to 2$ of $\mathsf{D}^n_2$, 
and define a partition $R_0 \cup R_1$ of $(\omega)^2$ as follows. For $x \in (\omega)^2$ with $\mu^x(1) = k$, $x \in R_i$ for the unique $i$ such that
	\[
		\lim_{s_1} \cdots \lim_{s_{n-1}} c(k,s_1,\ldots,s_{n-1}) = i.
	\]
Since each of the iterated limits is assumed to exist over what follows on the right, we may express these limits by alternating $\Sigma^0_2$ and $\Pi^0_2$ definitions, as
	\[
		(\exists t_1 \forall s_1 \geq t_1) (\forall t_2 \geq s_1 \exists s_2 \geq t_2) \cdots c(k,s_1,\ldots,s_{n-1}) = i.
	\]
Thus, $R_0$ and $R_1$ are $\Sigma^0_n$-definable open subsets of $(\omega)^2$. By standard techniques, there are Borel codes for $R_0$ and $R_1$ as $\Sigma^0_n$ sets 
uniformly computable in $c$ and in $\RCA$. (Below, we illustrate this process for $\mathsf{D}^3_2$.)

By definition, $(\omega)^2 = R_0 \cup R_1$ is a reduced coloring and hence is an instance of $\Delta^0_n$-$\mathsf{rDRT}^2_2$. Let $p \in (\omega)^\omega$ be a solution to this 
instance, say with color $i < 2$.  Thus, for every $x \in (p)^2$, the limit color of $k = \mu^x(1)$ is $i$. Define $L = \{\mu^p(m) : m \geq 1\}$. Since for each $k \in L$, there is an 
$x \in (p)^2$ such that $\mu^x(1) = k$, $L$ is limit-homogeneous for $c$ with color $i$.

We end this proof by illustrating how to define the Borel codes for $R_0$ and $R_1$ as $\Sigma^0_3$ sets from a stable coloring $c(k,s_1,s_2)$. In this case, we have 
\[
\lim_{s_1} \lim_{s_2} c(k,s_1,s_2) = i \Leftrightarrow \exists t_1 (\forall s_1 \geq t_1 \, \forall t_2 \geq s_1) (\exists s_2 \geq t_2)~c(k,s_1,s_2) = i.
\]
The nodes in each $R_i$ are the initial segments of the strings $\langle \langle k,t_1 \rangle, \langle s_1,t_2 \rangle, s_2 \rangle$ for $k \leq t_1 < s_1 \leq t_2 < s_2$ and  
the labeling functions are $\varphi_i(\sigma) = \cup$ if $|\sigma| \in \{ 0,2 \}$, $\varphi_i(\sigma) = \cap$ if $|\sigma|=1$ and $\varphi_i(\langle \langle k,t_1 \rangle, \langle s_1,t_2 \rangle, s_2 \rangle) = 
[0^k1]$ if $c(k,s_1,s_2) = i$ and is equal to $\emptyset$ if $c(k,s_1,s_2) = 1-i$. It is straightforward to check in $\RCA$ that $R_i$ represents the union of clopen sets $[0^k1]$ such that 
the limit color of $k$ is $i$. 
\end{proof}

\section{Reverse math and Borel codes}
\label{sec:codes}

\subsection{Equivalence of the Borel and Baire versions over $\ATR$} 
\label{subsec:Borel_version}
In this subsection we show that over the base theory $\ATR$, the Baire
and Borel versions of the Dual Ramsey Theorem are equivalent.
We make the following definition in reverse mathematics.
\begin{defn}[$\RCA$]\label{defn:borel}
A \emph{Borel code} is a pair
$(B,\varphi)$, where $B \subseteq \omega^{<\omega}$ 
is well-founded and $\varphi$ is a labeling function 
as in Definition \ref{defn:effective_borel_codes}.
\end{defn}

This definition differs slightly from the definition of a Borel code 
which is found in the standard reference \cite{sosa}.  In that 
treatment, there is no labeling function, but certain conventions 
on the strings in $B$ determine the labels. Because there is no labeling
function, the set of leaves of $B$ may not be guaranteed to exist
in weak theories.  In \cite{sosa}, the base theory for anything to 
do with Borel sets is $\ATR$, so this distinction is never used.  
We would like to consider weaker 
base theories.  When the base theory is weaker,  
a constructive presentation of a Borel code should include
knowledge of which nodes are leaves.  For example, this 
leaf-knowledge was used in the proof of Theorem \ref{thm:sW}.
This is the reason for 
including the labeling function in our definition.  

In Section \ref{sec:weihrauch} we diverged from the standard
definition in a second way, by ascertaining membership
in a $\mathbf \Sigma^0_n$ set \emph{virtually}.  The standard 
method, which we use in this section, is via evaluation maps.
\begin{defn}[$\RCA$] \label{defn:eval}
Let $(B,\varphi)$ be a Borel code and $x \in (\omega)^k$.
An \emph{evaluation map} for $B$ at $x$ is a function 
$f:B \rightarrow \{ 0,1 \}$ such that
\begin{itemize}
\item For leaves $\sigma \in B$, $f(\sigma) = 1$ if and only if $x \in \varphi(\sigma)$.
\item If $\varphi(\sigma) = \cup$, $f(\sigma) = 1$ if and only if there 
exists $n$ such that $\sigma\concat n \in B$ and $f(\sigma\concat n) = 1$.
\item If $\varphi(\sigma) = \cap$, $f(\sigma) = 1$ if and only if for all $n$ such that $\sigma\concat n \in B$, $f(\sigma\concat n) = 1$.
\end{itemize}
 We say $x \in B$ if there is an 
evaluation map with value 1 at the root, and we say $x \not \in B$ if there is an evaluation map with value 0 at the root. 
\end{defn}

Observe that both $x \in B$ and $x \not \in B$ 
are $\Sigma^1_1$ statements. In general, $\ATR$ is required to show that evaluation maps exist. Similarly, $(\omega)^k = C_0 \cup \ldots \cup C_{\ell-1}$ is the 
$\Pi^1_2$ statement that for every $x \in (\omega)^k$ and $i < \ell$, there is an evaluation map for $C_i$ at $x$ and for some $i < \ell$, $x \in C_i$.

\begin{defn}[$\RCA$]\label{defn:bairecode}
Let $B$ be a Borel (or open or closed) code for subset of $(\omega)^k$. A \textit{Baire code for} $B$ consists of open sets $U$ and $V$ and a sequence 
$\langle D_n : n \in \omega \rangle$ of 
dense open sets such that $U \cup V$ is dense and for every $p \in \cap_{n \in \omega} D_n$, if $p \in U$ then $p \in B$ and if $p \in V$ then $p \not \in B$.  
\end{defn}

\begin{defn}[$\RCA$]
A \textit{Baire code} for a Borel coloring $(\omega)^k = C_0 \cup \cdots \cup C_{\ell-1}$ consists of open sets $O_i$, $i < \ell$, and a sequence 
$\langle D_n : n \in \omega \rangle$ of dense open sets such that $\cup_{i < \ell} O_i$ is dense and for every $p \in \cap_{n \in \omega} D_n$ and $i < \ell$, if 
$p \in O_i$ then $p \in C_i$.
\end{defn}

We confirm that $\ATR$ proves that every 
Borel set has the property of Baire.  This is just an effectivization of
the usual proof.

\begin{prop}[$\ATR$]
\label{prop:BP}
Every Borel code for a subset of $(\omega)^k$ has a Baire code. 
\end{prop}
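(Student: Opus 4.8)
The plan is to prove this by \emph{arithmetical transfinite recursion} along the Kleene--Brouwer order $<_{\mathrm{KB}}$ of the tree $B$, which $\ACA$ already proves is a well order. The underlying classical content is the standard induction showing that every Borel set has the Baire property, but we must run it uniformly in the code and keep an explicit sequence of dense open sets recording the meager error at each stage. Recall that under $\ATR$ every Borel code has an evaluation map at every point (as noted after Definition~\ref{defn:borel} and Definition~\ref{defn:eval}), and these maps are unique, so for $\sigma\in B$ and $x\in(\omega)^k$ the value $f(\sigma)$ of the evaluation map $f$ for $B$ at $x$ is well defined; in particular $\overline{B}$ codes the set complement of $B$ on the nose. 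Establishing this proposition is exactly the implication $(1)\Rightarrow(3)$ of Theorem~\ref{ATRequiv} (the reverse implications being easy: $(3)\Rightarrow(2)$ by the Baire Category Theorem in $\RCA$, and $(2)\Rightarrow(1)$ by Theorem~\ref{thm:trivial}).

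First I would, by recursion on $\sigma\in B$ along $<_{\mathrm{KB}}$, attach to each node an open code $U_\sigma$ and a sequence $\langle D^\sigma_\ell\rangle_\ell$ of dense open codes, maintaining the invariant: for every $x\in\bigcap_\ell D^\sigma_\ell$, $f(\sigma)=1$ iff $x\in U_\sigma$. The cases: if $\sigma$ is a leaf it codes one of the clopen sets $\emptyset$, $(\omega)^k$, $[\tau_m]$, $\overline{[\tau_m]}$; take $U_\sigma$ to be the (uniformly computable) open code for that set and $D^\sigma_\ell=(\omega)^k$. If $\sigma\ne\lambda$ is an internal node with even last entry (a union node), set $U_\sigma=\bigcup_{n:\,\sigma^{\smallfrown}n\in B}U_{\sigma^{\smallfrown}n}$ and let $\langle D^\sigma_\ell\rangle$ enumerate all the $D^{\sigma^{\smallfrown}n}_m$. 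If $\sigma\ne\lambda$ is an internal node with odd last entry (an intersection node), put $V_{\sigma^{\smallfrown}n}:=\operatorname{int}((U_{\sigma^{\smallfrown}n})^{c})$ and $W_\sigma:=\bigcup_n V_{\sigma^{\smallfrown}n}$, set $U_\sigma:=\operatorname{int}((W_\sigma)^{c})$, and let $\langle D^\sigma_\ell\rangle$ enumerate the $D^{\sigma^{\smallfrown}n}_m$ together with the dense open sets $(\partial U_{\sigma^{\smallfrown}n})^{c}$ and $(\partial W_\sigma)^{c}$. Finally set $U_\lambda:=U_{\langle m_B\rangle}$ and $D^\lambda_\ell:=D^{\langle m_B\rangle}_\ell$. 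Since $\sigma^{\smallfrown}n<_{\mathrm{KB}}\sigma$ and $\langle m_B\rangle<_{\mathrm{KB}}\lambda$, and each step is arithmetical in the previously defined data and in $B$ (the intersection step uses the $\Pi^0_1$ operation ``interior of complement''), this recursion is available in $\ATR$. That the $D^\sigma_\ell$ are genuinely dense open (each $\partial U$, $\partial W$ is closed and nowhere dense, being a boundary of an open set) and that the invariant propagates are then proved by one arithmetical transfinite induction along $<_{\mathrm{KB}}$: the union case uses that a point in $\bigcap_\ell D^\sigma_\ell$ reads $f(\sigma^{\smallfrown}n)$ off $U_{\sigma^{\smallfrown}n}$; the intersection case uses additionally that off $\partial U_{\sigma^{\smallfrown}n}$ a point lies in exactly one of $U_{\sigma^{\smallfrown}n}$, $V_{\sigma^{\smallfrown}n}$, and off $\partial W_\sigma$ in exactly one of $W_\sigma$, $U_\sigma$.

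To finish, take the Baire code $U:=U_\lambda$, $V:=\operatorname{int}((U_\lambda)^{c})$, with dense open sets the $D^\lambda_\ell$ together with $(\partial U_\lambda)^{c}$. For $x$ in the intersection of all of these, $x\notin\partial U_\lambda$, so by the invariant $x\in U\Rightarrow f(\lambda)=1\Rightarrow x\in B$, and $x\in V\Rightarrow x\notin U_\lambda\Rightarrow f(\lambda)=0\Rightarrow x\notin B$; moreover $U\cup V=U_\lambda\cup\operatorname{int}((U_\lambda)^{c})=(\partial U_\lambda)^{c}$, which is open and dense, so the density clause needs no Baire category argument. This gives the required Baire code.

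The main obstacle, which dictates the bookkeeping above, is that the obvious attempt fails: for a union node one wants the open set $U$ for $B_\sigma$ and \emph{some open set} for its complement, but the honest complement-approximant of $\bigcup_n B_{\sigma^{\smallfrown}n}$ is the $G_\delta$ set $\bigcap_n(B_{\sigma^{\smallfrown}n})^{c}$, which has no open representative in general --- its interior can be empty even when it is comeager, so taking $\operatorname{int}(\bigcap_n V_{\sigma^{\smallfrown}n})$ does not yield a dense $U\cup V$. The fix is to carry through the recursion only the ``$\Sigma$-side'' open set $U_\sigma$ and the accumulated dense open sets, to recover the complement side only when strictly needed via $\operatorname{int}(\cdot^{c})$, and to absorb the resulting boundary sets $\partial U$, $\partial W$ into the dense open sequence; then $B_\sigma\mathbin{\triangle}U_\sigma$ stays inside the union of the complements of the dense open sets listed at $\sigma$ at every stage. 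The only other points requiring care are purely structural: that every recursion step is arithmetical, so that $\ATR$ (and not merely $\ACA$) drives it along the possibly long well order $<_{\mathrm{KB}}$, and that it is $\ATR$ which supplies the evaluation maps making the invariant meaningful.
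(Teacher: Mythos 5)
Correct, and structurally very close to the paper's proof: both proceed by arithmetical transfinite recursion along $KB(B)$, assigning to each node $\sigma$ an open code together with a sequence of dense open codes, then verify by arithmetical transfinite induction along $KB(B)$ that off the meager residue the open codes compute the evaluation maps correctly. The one genuine difference is in the bookkeeping. The paper carries a \emph{pair} of disjoint open codes $U_\sigma, V_\sigma$ at every node: at a $\cup$-node it takes $U_\sigma = \bigcup_k U_{\sigma^\smallfrown k}$, defines $V_\sigma$ as the union of all $[\tau]$ on which every child's $V_{\sigma^\smallfrown k}$ is dense, and puts $D_{n,\sigma^\smallfrown k}\cap(U_{\sigma^\smallfrown k}\cup V_{\sigma^\smallfrown k})$ into the dense sequence; this requires establishing that $U_\sigma\cup V_\sigma$ is dense at each stage, as in Proposition~\ref{prop:baire_degrees}. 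You instead carry only the $\Sigma$-side $U_\sigma$, recovering the $\Pi$-side as $\operatorname{int}(\cdot^{c})$ only at $\cap$-nodes and at the root, and you absorb the discrepancy into the dense sequence by adding the boundary-complements $(\partial U_{\sigma^\smallfrown n})^{c}$ and $(\partial W_\sigma)^{c}$, which are automatically dense open since the boundary of an open set has empty interior. This makes density of $U\cup V$ in the final Baire code immediate --- $U_\lambda\cup\operatorname{int}((U_\lambda)^{c})=(\partial U_\lambda)^{c}$ --- at the price of maintaining only a one-sided invariant (an ``iff'' modulo the dense sets) rather than the paper's two one-way implications from $U_\sigma$ and $V_\sigma$. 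Both recursions are arithmetical in the data at $KB$-earlier nodes, so both are licensed by $\mathsf{ETR}$, which is available under $\ATR$, and both correctly identify where $\ATR$ is needed (driving the recursion along the possibly long well order $KB(B)$, and supplying the evaluation maps that make the invariant contentful).
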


\begin{proof}
Fix a Borel code $B$. For $\sigma \in B$, let 
$B_{\sigma} = \{ \tau \in B : \tau \text{ is comparable to } \sigma \}$. $B_{\sigma}$ is a Borel code for the set coded coded by the subtree of $B$ above $\sigma$ in the 
following sense. Let $f$ be an evaluation map for $B$ at $x$. The function $g: B_{\sigma} \rightarrow 2$ defined by $g(\tau) = f(\tau)$ for $\tau \succeq \sigma$ and 
$g(\tau) = f(\sigma)$ for $\tau \prec \sigma$ is an evaluation map for $B_{\sigma}$ at $x$ which witnesses $x \in B_{\sigma}$ if and only if $f(\sigma) = 1$. We 
denote this function $g$ by $f_{\sigma,x}$. 

Formally, our proof proceeds in two steps. First, by arithmetic transfinite recursion on the Kleene-Brouwer order $KB(B)$, we construct 
open sets $U_{\sigma}$, $V_{\sigma}$ and $D_{n,\sigma}$, $n \in \omega$, which are intended to form a Baire code for $B_{\sigma}$. This construction is essentially identical 
to the proof of Proposition \ref{prop:baire_degrees}. Second, for any $x \in (\omega)^k$ 
and evaluation map $f$ for $B$ at $x$, we show by arithmetic transfinite induction on $KB(B)$ that if $x \in \cap_{n \in \omega} D_{n,\sigma}$, then $x \in U_{\sigma}$ implies 
$x \in B_{\sigma}$ via $f_{\sigma,x}$ and $x \in V_{\sigma}$ implies $x \not \in B_{\sigma}$ via $f_{\sigma,x}$. For ease of presentation, we combine these two steps.  
Since $\ATR$ suffices to construct evaluation maps, we treat Borel codes as sets in a naive manner and suppress explicit mention of the evaluation maps.

If $\sigma$ is a leaf coding a basic clopen set $[\tau]$, we set $U_{\sigma} = [\tau]$, $V_{\sigma} = \overline{[\tau]}$ and $D_{n,\sigma} = (\omega)^k$. 
Similarly, if $\sigma$ codes $\overline{[\tau]}$, we switch the values of $U_{\sigma}$ and $V_{\sigma}$. In either case, it is clear that these open sets form a 
Baire code for $B_{\sigma}$. 

Suppose $\sigma$ is an internal node coding a union, so $B_{\sigma}$ is the union of $B_{\sigma^{\smallfrown}k}$ for $\sigma^{\smallfrown}k \in B$. We define 
$U_{\sigma}$ to be the union of $U_{\sigma^{\smallfrown}k}$ for $\sigma^{\smallfrown}k \in B$ and $V_{\sigma}$ to be the union of $[\tau]$ such that 
$V_{\sigma^{\smallfrown}k}$ is dense in $[\tau]$ for all $\sigma^{\smallfrown}k \in B$. The sequence $D_{n,\sigma}$ is the sequence of all open sets of the 
form $D_{n,\sigma^{\smallfrown}k} \cap (U_{\sigma^{\smallfrown}k} \cup V_{\sigma^{\smallfrown}k})$ for $n \in \omega$ and $\sigma^{\smallfrown}k \in B$. 
As in the proof of Proposition \ref{prop:baire_degrees}, $U_{\sigma} \cup V_{\sigma}$ and each $D_{n,\sigma}$ are dense. 

Let $x \in \cap_{n \in \omega} D_{n, \sigma}$. Suppose $x \in U_{\sigma}$ and we show $x \in B_{\sigma}$. By the definition of $U_{\sigma}$, fix 
$\sigma^{\smallfrown}k \in B$ such that $x \in U_{\sigma^{\smallfrown}k}$. Since $x \in \cap_{n \in \omega} D_{n, \sigma^{\smallfrown}k}$, we have by induction that 
$x \in B_{\sigma^{\smallfrown}k}$ and hence $x \in B_{\sigma}$. On the other hand, suppose $x \in V_{\sigma}$ and we show $x \not \in B_{\sigma}$. Fix $\tau$ such that 
$\tau \prec x$ and $[\tau] \subseteq V_{\sigma}$, and fix $k$ such that 
$\sigma^{\smallfrown}k \in B$. Since $x \in \cap_{n \in \omega} D_{n,\sigma}$, $x \in U_{\sigma^{\smallfrown}k} \cup V_{\sigma^{\smallfrown}k}$. However, 
$V_{\sigma^{\smallfrown}k}$ is dense in $[\tau]$. Therefore, $x \not \in U_{\sigma^{\smallfrown}k}$ (because $U_{\sigma^{\smallfrown}k}$ and $V_{\sigma^{\smallfrown}k}$ 
must be disjoint as in the proof of Proposition \ref{prop:baire_degrees}), 
so $x \in V_{\sigma^{\smallfrown}k}$. Since $x \in \cap_{n \in \omega} D_{n,\sigma^{\smallfrown}k}$, we have by induction that 
$x \not \in B_{\sigma^{\smallfrown}k}$. Because this holds for every $\sigma^{\smallfrown}k \in B$, it follows that $x \not \in B_{\sigma}$, completing the case for unions.

The case for an interior node coding an intersection is similar with the roles of $U_{\sigma}$ and $V_{\sigma}$ switched.\end{proof}

\begin{prop}[$\ATR$]
$\BaDRT^k_{\ell}$ implies $\BoDRT^k_{\ell}$.
\end{prop}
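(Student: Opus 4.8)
The plan is to chain together two facts that are already available once we are working over $\ATR$: that every Borel coloring has a Baire code (Proposition \ref{prop:BtoB}), and the hypothesis $\BaDRT^k_{\ell}$ itself.

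First I would fix a Borel coloring $(\omega)^k = C_0 \cup \cdots \cup C_{\ell-1}$. Invoking Proposition \ref{prop:BtoB}, which is proved in $\ATR$ (ultimately via Theorem \ref{ATRequiv} and the effective transfinite recursion of Section \ref{sec:codes}), we obtain open sets $O_i$ for $i < \ell$ and a sequence $\langle D_n : n \in \omega \rangle$ of dense open sets such that $\bigcup_{i < \ell} O_i$ is dense and such that for every $p \in \bigcap_n D_n$ and every $i < \ell$, $p \in O_i$ implies $p \in C_i$. The key observation is that this data is precisely a Baire $\ell$-coloring of $(\omega)^k$ in the sense of Definition \ref{defn:Baire}: the only requirements there are that the $D_n$ be dense open and that $\bigcup_{i < \ell} O_i$ be dense, and both hold here.

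Next I would apply $\BaDRT^k_{\ell}$ to this Baire coloring. This yields a partition $q \in (\omega)^{\omega}$ and a color $i < \ell$ such that for all $x \in (q)^k$ we have $x \in O_i \cap \bigcap_n D_n$. For any such $x$, since $x \in \bigcap_n D_n$ and $x \in O_i$, the defining property of the Baire code gives $x \in C_i$. Hence for all $x \in (q)^k$ we have $x \in C_i$, so $q$ and $i$ witness $\BoDRT^k_{\ell}$ for the given coloring.

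There is essentially no hard step in this argument: all of the content has been pushed into Proposition \ref{prop:BtoB} (and through it into Theorem \ref{ATRequiv}), together with the observation that a Baire code for a Borel coloring is literally an instance of the data $\BaDRT^k_{\ell}$ consumes. The only point that warrants a word of care is the quantifier form of $\BoDRT^k_{\ell}$: since ``$x \in C_i$'' is a $\Sigma^1_1$ assertion about the existence of an evaluation map, one should note that the membership $x \in C_i$ we derive does come equipped with such an evaluation map produced inside $\ATR$, so the conclusion is of the required shape.
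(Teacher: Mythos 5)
Your argument is correct and matches the paper's proof exactly: invoke Proposition \ref{prop:BtoB} to obtain a Baire code for the given Borel coloring, apply $\BaDRT^k_{\ell}$ to that code, and observe that the resulting homogeneous partition is also homogeneous for the original Borel coloring. The paper states this in two sentences; your write-up simply spells out the same steps in more detail, including the (reasonable) remark about the $\Sigma^1_1$ form of the conclusion.
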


\begin{proof}
By Proposition \ref{prop:BP},
fix Baire codes $U_i$, $V_i$ and $D_{n,i}$ for each $C_i$. We claim that the open sets $U_i$ for $i < \ell$ and 
the sequence of dense open sets $D_{n,i}$ for $i < \ell$ and $n < \omega$ form a Baire code for this coloring. Note that if $i < \ell$ and $x \in \cap_{n,i} D_{n,i}$, 
then $x \in U_i$ implies $x \in C_i$. Therefore, it suffices to show that $\cup_{i < \ell} U_i$ is dense.

Suppose not.  Then there is $\tau$ such that $[\tau] \cap U_i = \emptyset$ for all $i$.  Because each set $U_i \cup V_i$ is open and dense, by the 
Baire Category Theorem there is $x \in [\tau]$ such that $x \in \cap_{n\in \omega, i<\ell} D_{n,i}$ and 
$x \in \cap_{i < \ell} (U_i \cup V_i)$.  Since $x$ is not in any $U_i$, we have $x \in V_i$ for each $i$. Therefore, for each $i$, $x \not\in C_i$, 
contradicting that $(\omega)^k = C_0 \cup \dots \cup C_{\ell-1}$.
\end{proof}

\begin{lem}[$\RCA$]
For every code $O$ for an open set, there is a Borel code $B$ such that $(\omega)^k = B \cup \overline{B}$ and for all $x \in (\omega)^k$, $x \in B$ if and only if 
$x \in O$. 
\end{lem}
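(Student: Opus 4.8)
The plan is to construct $B$ directly as a tree of depth at most $3$ that represents the open set $O$ as a union of basic clopen sets. Fix the standing enumeration $\tau_0,\tau_1,\dots$ of $(\omega)^k_{\fin}$; for each $e\in O$, which is a pair $\langle n,\sigma\rangle$, let $m_e$ be the index with $\sigma=\tau_{m_e}$. Set $m_B=0$ and
\[
B=\{\lambda,\langle 0\rangle\}\cup\{\langle 0,e\rangle:e\in O\}\cup\{\langle 0,e,2m_e+2\rangle:e\in O\}.
\]
This set is $\Delta^0_1$ in $O$, so it exists in $\RCA$; the point of running through a separate node $\langle 0,e\rangle$ for each \emph{element} $e\in O$, rather than one node per string occurring in $O$, is exactly to avoid the $\Sigma^0_1$ set $\{m:\exists n\,\langle n,\tau_m\rangle\in O\}$, which $\RCA$ need not prove to exist. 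One checks at once that $B$ is a tree with no infinite path whose unique node of length $1$ is $\langle 0\rangle$, so it is a Borel code in the sense of Definition~\ref{defn:borel}. When $O=\emptyset$ the node $\langle 0\rangle$ is a leaf labelled $0$, coding $\emptyset$; otherwise it is an interior node labelled $0$, i.e.\ a union, each of whose children $\langle 0,e\rangle$ has the single successor $\langle 0,e,2m_e+2\rangle$, a leaf coding $[\tau_{m_e}]$.

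Next I would exhibit, for an arbitrary $x\in(\omega)^k$, an evaluation map $f$ for $B$ at $x$: put $f(\langle 0,e,2m_e+2\rangle)=1$ iff $\tau_{m_e}\prec x$; let $f(\langle 0,e\rangle)=f(\langle 0,e,2m_e+2\rangle)$, which is forced since $\langle 0,e\rangle$ has a single successor (so whether it is read as a union or as an intersection is immaterial); let $f(\langle 0\rangle)=1$ iff $\exists e\in O\,(\tau_{m_e}\prec x)$, equivalently iff $x\in O$; and $f(\lambda)=f(\langle 0\rangle)$. Checking the clauses of Definition~\ref{defn:eval} is routine. The only values not already $\Delta^0_1$ in $O\oplus x$ are those at $\langle 0\rangle$ and $\lambda$, and these are decided by the complementary $\Sigma^0_1$/$\Pi^0_1$ pair given by whether or not $x\in O$, so $f$ exists in $\RCA$. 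Since an evaluation map for $B$ at $x$ has now been produced for every $x$, this gives $(\omega)^k=B\cup\overline{B}$.

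Finally, for the equivalence $x\in B\iff x\in O$, the backward direction is immediate: the $f$ above has $f(\lambda)=1$ whenever $x\in O$. For the forward direction I would note that \emph{every} evaluation map $g$ for $B$ at $x$ is forced to agree with $f$ — the leaf clause pins $g$ down on the leaves $\langle 0,e,2m_e+2\rangle$, the single-successor structure pins down $g(\langle 0,e\rangle)$, and the union clause at $\langle 0\rangle$ then forces $g(\langle 0\rangle)=g(\lambda)$ to be $1$ if $x\in O$ and $0$ otherwise — so $g(\lambda)=1$ only when $x\in O$. I expect the only delicate point, and it is a mild one, to be bookkeeping about which objects live in $\RCA$ (the tree $B$ and the evaluation maps $f$), together with the observation that for this particular, very shallow $B$ the evaluation maps are unique without any appeal to $\ACA$, even though uniqueness for general Borel codes needs $\ACA$.
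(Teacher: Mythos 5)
Your proposal is correct and takes essentially the same approach as the paper: you build the same depth-3 tree $B$, one branch per element $\langle n,\sigma\rangle\in O$, and show that for each $x$ an evaluation map exists and pins down $x\in B\iff x\in O$. The only stylistic difference is how the existence of the evaluation map is justified: the paper explicitly defines two candidate maps $f_0,f_1$ (differing only at $\lambda$ and $\langle 0\rangle$) and notes that one of them is an evaluation map, which is a cleaner way of saying the case split you phrase as a ``complementary $\Sigma^0_1/\Pi^0_1$ pair'' (that phrasing is slightly loose — the predicate $x\in O$ is not $\Delta^0_1$; the point is just that $\RCA$ is classical, so one fixes which case holds and takes the corresponding $\Delta^0_1(O\oplus x)$ set).
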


\begin{proof}
The content here lies in the proof that $(\omega)^k = B \cup \overline{B}$.  That is,
we need to show that in the obvious Borel code, 
every $x \in (\omega)^k$ has an evaluation map.

Fix $O$. Let $(B,\varphi)$ be the Borel code consisting of a root and a single leaf
for each $\langle s, \tau\rangle \in O$, where the leaf is labeled with $[\tau]$.

 We claim that for every $x \in (\omega)^k$, there is a unique evaluation map 
 $f$ for $B$ at $x$, and 
$f(\lambda) = 1$ if and only if $x \in O$. To prove this claim, we define two 
potential evaluation maps, $f_0$ and $f_1$. 
Let $f_0(\lambda) = 0$ and $f_1(\lambda)=1$.  Then for each $i\in \{0,1\}$ 
and each leaf $\sigma$ with label $\tau$, define $f_i(\sigma) = 1$ if 
and only if $x \in [\tau]$. Both these functions have $\Delta^0_1(x,B,\varphi)$
definitions, and exactly one of them satisfies the condition to be 
an evaluation map.  Clearly, this condition implies that $x \in B$ 
if and only if $x \in O$.
\end{proof}

\begin{cor}[$\RCA$]
$\BoDRT^k_{\ell}$ implies $\BaDRT^k_{\ell}$. 
\end{cor}
\begin{proof} The previous proposition shows that
$\BoDRT^k_{\ell}$ implies $\ODRT^k_{\ell}$ and hence implies $\BaDRT^k_{\ell}$. 
\end{proof}

\subsection{The strength of ``Every Borel set has the property of Baire''}

We have just seen that over $\ATR$, the Borel and Baire versions
of the Dual Ramsey Theorem are equivalent.  But only one 
direction used $\ATR$, in order to assert that every Borel set 
has the property of Baire.  In this section, we ask if this principle 
really requires $\ATR$.  We find that it does, but the reason
is unsatisfactory, because it depends
on a technicality in the standard definition of a Borel
set.  Some of the authors of the present paper removed that 
technicality in the later-researched but earlier-appearing paper 
\cite{ADMSW}.  When the technicality is removed, a principle 
strictly weaker than $\ATR$ emerges.  
We refer the reader to \cite{ADMSW}
for details.

In this section we show:
\begin{thm}[$\RCA$]
\label{ATRequiv}
The following are equivalent. 
\begin{enumerate}
\item $\ATR$.
\item For every Borel code $B$ for a subset of $(\omega)^k$, there is an $x \in (\omega)^k$ such that $x \in B$ or $x \not \in B$. 
\item Every Borel code $B$ for a subset of $(\omega)^k$ has a Baire code. 
\end{enumerate}
\end{thm}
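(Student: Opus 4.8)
For $(1)\Rightarrow(3)\Rightarrow(2)\Rightarrow(1)$: the plan is to establish this cycle over $\RCA$, which covers all three equivalences. The implication $(3)\Rightarrow(2)$ is immediate from the Baire Category Theorem, which is already available in $\RCA$: given a Baire code $(U,V,\langle D_n\rangle)$ for a Borel code $B$, the sets $U\cup V,D_0,D_1,\dots$ are all dense open, so their intersection is dense, hence nonempty, and any point $x$ in it lies in $U$ (so $x\in B$) or in $V$ (so $x\notin B$).

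For $(1)\Rightarrow(3)$: working in $\ATR$ (so that $\ACA$, effective transfinite recursion, and arithmetical transfinite recursion along well-founded trees are all available), I would build a Baire code for a given Borel code $B$ by recursion on the well-founded tree underlying $B$. To each node $\sigma$ I attach a triple $(U_\sigma,V_\sigma,\langle D^\sigma_n\rangle_n)$ of two open sets and a sequence of dense open sets, maintaining: (a) $U_\sigma\cap V_\sigma=\emptyset$ and $U_\sigma\cup V_\sigma$ is dense; (b) for every $x\in\bigcap_n D^\sigma_n$, if $x\in U_\sigma$ then $x$ is in the set $B_\sigma$ coded by the subtree of $B$ rooted at $\sigma$, and if $x\in V_\sigma$ then $x\notin B_\sigma$. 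At a leaf coding a clopen set $C$, set $U_\sigma=C$, $V_\sigma=(\omega)^k\setminus C$, with empty dense-set sequence. At a union node with children $\tau_i$, set $U_\sigma=\bigcup_i U_{\tau_i}$, $V_\sigma=(\omega)^k\setminus\overline{U_\sigma}$, and let $\langle D^\sigma_n\rangle$ enumerate all the $D^{\tau_i}_m$ together with all the sets $U_{\tau_i}\cup V_{\tau_i}$ (dense open by (a) at the children); the intersection case is dual, exchanging the roles of $U$ and $V$. Invariant (b) at $\sigma$ follows by noting that on $\bigcap_n D^{\tau_i}_n\cap(U_{\tau_i}\cup V_{\tau_i})$ one has $x\in B_{\tau_i}\leftrightarrow x\in U_{\tau_i}$, so on $\bigcap_n D^\sigma_n$ membership in $\bigcup_i B_{\tau_i}$ agrees with membership in $\bigcup_i U_{\tau_i}$ (and disagrees with $\bigcup_i V_{\tau_i}$); invariant (a) is automatic since the complement of $U_\sigma\cup V_\sigma$ is the boundary of an open set, hence nowhere dense. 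The Baire code for $B$ is the triple at the root. The care here lies in formalizing the recursion over the well-founded tree and the accumulation of the dense-set sequences inside $\RCA$.

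For $(2)\Rightarrow(1)$, the main work, the idea is that a Borel code can be rigged so that \emph{every} evaluation map for it — at every point, and regardless of the value at the root — must compute a prescribed jump, or jump hierarchy. First, $(2)$ implies $\ACA$: given a set $A$, let $B_A$ have root a union node with children $\rho_n$ ($n\in\omega$), each $\rho_n$ a union node with children the leaves $\ell_{n,s}$ ($s\in\omega$), where $\ell_{n,s}$ codes the clopen set $(\omega)^k$ if $\{n\}^A(n)$ halts within $s$ steps and the clopen set $\emptyset$ otherwise. This is a well-founded (height $3$), $A$-computable Borel code. Since each clopen set coded at a leaf is either all of $(\omega)^k$ or empty, any evaluation map $f$ for $B_A$ at any point has $f(\ell_{n,s})=1$ iff $\{n\}^A(n)$ halts in $\leq s$ steps, hence $f(\rho_n)=1$ iff $n\in A'$; thus $\{n:f(\rho_n)=1\}$ is a copy of $A'$ and exists by $\Delta^0_1$ comprehension. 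Applying $(2)$ to $B_A$ therefore yields $A'$, so $\ACA$ holds.

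Now, working in $\ACA$, let $\prec$ be a well-ordering; I want to produce the $\Sigma^0_1$-jump hierarchy $H$ along $\prec$, which suffices for $\ATR$. I would build, by the same pattern but with the recursion along $\prec$ folded into the shape of the tree, a uniformly $\prec$-computable Borel code $B_\prec$ that internally simulates the definition of $H$: for each $a\in\on{field}(\prec)$ and $n\in\omega$, subtrees $\rho_{a,n}$ (a union over $s$) and $\bar\rho_{a,n}$ (an intersection over $s$) coding, respectively, ``$\{n\}^{\bigoplus_{b\prec a}H^{[b]}}(n)$ halts in $\leq s$ steps'' and its negation, each written as a finite combination — using only unions and intersections — of fresh copies of the subtrees $\rho_{b,m}$ and $\bar\rho_{b,m}$ for $b\prec a$ (the base case referring directly to $\prec$). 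The tree $B_\prec$ is well-founded because an infinite path through it would read off an infinite $\prec$-descending sequence. Applying $(2)$ to $B_\prec$ gives an evaluation map $f$; set $\hat H^{[a]}=\{n:f(\rho_{a,n})=1\}$. Using uniqueness of evaluation maps at a point (provable in $\RCA$ by a leftmost-path argument, so all copies of a given $\rho_{b,m}$ get the same $f$-value) and arithmetical transfinite induction along $\prec$ (available in $\ACA$, since a counterexample yields a $\prec$-descent), one checks $f(\bar\rho_{a,n})=1-f(\rho_{a,n})$ and $\hat H^{[a]}=(\bigoplus_{b\prec a}\hat H^{[b]})'$ for all $a$; hence $\hat H$ is a jump hierarchy along $\prec$, and $\ATR$ follows. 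The chief obstacle is the combinatorial bookkeeping in the definition of $B_\prec$: arranging the positional coding of the copied subtrees and the finite boolean-combination gadgets so that $B_\prec$ is provably a well-founded Borel code, $\prec$-computable, with exactly the intended local semantics.
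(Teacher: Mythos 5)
Your cycle $(1)\Rightarrow(3)\Rightarrow(2)\Rightarrow(1)$ matches the overall structure of the paper's proof, and the $(1)\Rightarrow(3)$ and $(3)\Rightarrow(2)$ directions are essentially the paper's arguments (the paper's Proposition~\ref{prop:BP} for $(1)\Rightarrow(3)$ recurses along $KB(B)$ with a $V_\sigma$ defined by ``$V_{\sigma^\smallfrown k}$ dense in $[\tau]$ for all $k$''; your interior-of-complement formulation is equivalent and slightly cleaner). The genuinely different part is $(2)\Rightarrow(1)$. The paper first proves a scheme $\mathsf{ETR}$ of effective ($\Sigma^0_1$) transfinite recursion in $\ACA$ --- explicitly flagging that prior literature claimed this in $\RCA$ but overlooked the necessary $\Sigma^0_1$ transfinite induction --- and then uses $\mathsf{ETR}$ to construct, by recursion on $b$, a family of \emph{trivial} Borel codes $B_{x,b}$ whose unique evaluation reads off $x\in TJ(H_b^X)$; applying (2) to one carefully chosen $B_{x,b}$ then yields the whole hierarchy below $b$. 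You instead propose to skip $\mathsf{ETR}$ entirely by folding the recursion into the shape of a single large tree $B_\prec$, defining it directly so that a node's label records the trace of recursive calls and well-foundedness comes from $\prec$ being a well order. If the combinatorial bookkeeping is carried out (your acknowledged gap), this is a legitimate alternative that avoids isolating $\mathsf{ETR}$ as a tool, at the cost of a more intricate single construction whose well-definedness and local semantics must be verified by hand rather than by an appeal to a general recursion principle. The paper's route is more modular and surfaces $\mathsf{ETR}$ as an object of independent interest. One small inaccuracy to flag: you assert that uniqueness of evaluation maps is ``provable in $\RCA$ by a leftmost-path argument,'' but picking the $KB$-least point of disagreement between two evaluation maps requires transfinite induction for a $\Sigma^0_1$ (or even $\Delta^0_1$-with-parameters) predicate along $KB(B)$, which is not available in $\RCA$ in general; the paper is careful to claim uniqueness only in $\ACA$. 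This does not damage your argument, since you are already working in $\ACA$ by that stage, but the weaker base-theory claim should be withdrawn.
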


In fact, the implication from (2) to (1) can be witnessed using only \emph{trivial}
Borel codes, which we define as follows.
\begin{defn}[$\RCA$]
A Borel code $(B,\varphi)$ for a subset of $(\omega)^k$
is \emph{trivial} if every leaf is labeled
with either $\emptyset$ or $(\omega)^k$.
\end{defn}
If $B$ is a trivial Borel code, then an evaluation map for $B$ at $p$ is independent of $p$, so we can refer to an evaluation map $f$ for $B$. 
Because we work with 
trivial Borel codes, the underlying topological space does not matter as long as Borel codes are defined in a manner similar to 
Definitions \ref{defn:borel} and \ref{defn:eval}. For example, Theorem \ref{ATRequiv} holds for Borel codes of subsets of $2^{\omega}$ or $\omega^{\omega}$ as 
defined in Simpson \cite{sosa}.  (The fact that the leaves are labeled in Definition \ref{defn:borel}
does not affect any of the arguments in this section.)

The main ideas in the proof that (2) implies (1) use effective transfinite 
recursion and are similar to those in Section 7.7 of Ash and Knight \cite{AK}.

\begin{prop}[$\RCA$]
The statement ``every trivial Borel code has an evaluation map'' implies $\ACA$. 
\end{prop}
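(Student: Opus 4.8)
The plan is to derive $\ACA$ by showing that the range of every one-to-one function exists; by a standard argument (see \cite{sosa}), this suffices over $\RCA$. So I will fix an injection $g\colon\omega\to\omega$ and build from $g$ a trivial Borel code $B$ whose evaluation map explicitly records $\operatorname{range}(g)$.

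The code $B$ will be a well-founded, $g$-computable tree of small bounded height. Its root $\lambda$ will have the single child $\langle 0\rangle$, so that $m_B=0$; since its last entry $0$ is even, $\langle 0\rangle$ is a union node. For each $n$ I will give $\langle 0\rangle$ the child $\nu_n:=\langle 0,2n\rangle$, again (even last entry) a union node. Each $\nu_n$ will have, for every $s\in\omega$, the child $w_{n,s}:=\langle 0,2n,2s+1\rangle$, which has odd last entry and is thus an intersection node; and each $w_{n,s}$ will have exactly one child, the leaf $\langle 0,2n,2s+1,\epsilon_{n,s}\rangle$ where $\epsilon_{n,s}=1$ if $g(s)=n$ and $\epsilon_{n,s}=0$ otherwise. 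Every leaf of $B$ then has last entry in $\{0,1\}$, so $B$ is a trivial Borel code, and no branch of $B$ has length more than five, so $B$ has no infinite path. Deciding membership in $B$ only requires evaluating $g$, so $B$ is a $\Delta^0_1(g)$ set and exists in $\RCA$.

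By hypothesis $B$ has an evaluation map $f$; since $B$ is trivial this $f$ is independent of any point $p$, and I will not need uniqueness of $f$. Tracing the defining clauses of Definition \ref{defn:eval} upward along $B$: at the leaf $\langle 0,2n,2s+1,\epsilon_{n,s}\rangle$ the last entry lies in $\{0,1\}$, so $f$ there equals $\epsilon_{n,s}$, which is $1$ exactly when $g(s)=n$; at the intersection node $w_{n,s}$, whose only child is that leaf, $f(w_{n,s})$ equals the same value; and at the union node $\nu_n$ we get $f(\nu_n)=1$ iff $f(w_{n,s})=1$ for some $s$, i.e.\ iff $g(s)=n$ for some $s$, i.e.\ iff $n\in\operatorname{range}(g)$. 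Hence $\operatorname{range}(g)=\{\,n:f(\nu_n)=1\,\}$, a $\Delta^0_1$ (indeed $\Delta^0_0$) definition relative to the sets $f$ and $B$, so $\operatorname{range}(g)$ exists by $\RCA$. As $g$ was arbitrary, $\RCA$ together with the hypothesis proves that every one-to-one function has a range, hence proves $\ACA$.

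I expect the entire argument to be routine bookkeeping; the one point requiring a little care is ensuring that the Boolean value $[\,g(s)=n\,]$ is carried by a genuine $\{0,1\}$-labelled leaf so that $B$ remains \emph{trivial}. This is exactly why the single-child intersection nodes $w_{n,s}$ are inserted rather than hanging $\{0,1\}$-leaves directly under $\nu_n$: a union node can have at most one child with last entry $0$ and at most one with last entry $1$, so it cannot directly aggregate infinitely many such Boolean leaves.
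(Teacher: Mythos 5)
Your proof is correct and takes essentially the same approach as the paper: fix an injection $g$, build a trivial Borel code whose root is the union node $\langle 0\rangle$, with union nodes $\langle 0,2n\rangle$ whose subtrees aggregate the Boolean values $[g(s)=n]$ via single-child intermediate nodes, and read off $\operatorname{range}(g) = \{n : f(\langle 0,2n\rangle)=1\}$ from any evaluation map $f$. The only cosmetic difference is that you force the intermediate nodes to have odd last entry $2s+1$, whereas the paper uses $\langle 0,2n,m\rangle$ for arbitrary $m$ and observes that parity is irrelevant for a node with a unique child.
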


\begin{proof}
Fix $g: \omega \rightarrow \omega$ and we show $\text{range}(g)$ exists. Let $B$ be the trivial Borel code consisting of the initial segments of 
$\langle n,m,1 \rangle$ for $g(m) = n$ and $\langle n,m,0 \rangle$ for $g(m) \neq n$. 
Label all leaves which end in 0 with $\emptyset$, and label all leaves
which end in 1 with the entire space.  Label all interior nodes with $\cup$.
Let $f$ be an evaluation map for $B$.  Then $f(\langle n\rangle) = 1$
if and only if there is an $m$ such that $g(m)=n$.
\end{proof}

In order to strengthen this result to imply $\ATR$, we need to verify that
effective transfinite recursion works in $\ACA$.
Let $LO(X)$ and $WO(X)$ be the standard formulas in second order arithmetic saying $X$ is a linear order and $X$ is a well order. We abuse notation and write $x \in X$ in  
place of $x \in \text{field}(X)$. For a formula $\varphi(n,X)$, $H_{\varphi}(X,Y)$ is the formula stating $LO(X)$ and $Y = \{ \langle n,j \rangle : j \in X  \wedge  
\varphi(n,Y^j) \}$ where $Y^j = \{ \langle m,a \rangle : a <_X j \wedge \langle m,a \rangle \in Y \}$. When $\varphi$ is arithmetic, $H_{\varphi}(X,Y)$ is 
arithmetic and $\ACA$ proves that if $WO(X)$, then there is at most one $Y$ such that $H_{\varphi}(X,Y)$. We define our formal version of effective transfinite recursion. 

\begin{defn}
$\mathsf{ETR}$ is the axiom scheme
\[
\forall X \, \Big[ \big( WO(X) \wedge \forall Y \, \forall n \, (\varphi(n,Y) \leftrightarrow \neg \psi(n,Y)) \big) \rightarrow \exists Y \, H_{\varphi}(X,Y) \Big]
\]
where $\varphi$ and $\psi$ range over $\Sigma^0_1$ formulas. 
\end{defn}

We show that $\mathsf{ETR}$ is provable in $\ACA$. Following Simpson \cite{sosa}, we avoid using the recursion theorem and note that 
the only place the proof goes beyond $\RCA$ is in the use of transfinite induction for $\Pi^0_2$ formulas, which holds is $\ACA$ and is equivalent to transfinite induction for 
$\Sigma^0_1$ formulas. Greenberg and Montalb\'{a}n \cite{GM} point out that $\mathsf{ETR}$ can also be proved using the recursion theorem, although this proof also uses 
$\Sigma^0_1$ transfinite induction. 

\begin{prop}
$\mathsf{ETR}$ is provable in $\ACA$. 
\end{prop}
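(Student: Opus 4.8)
The plan is to prove $\mathsf{ETR}$ in $\ACA$ by the standard "approximation stage by stage" technique from \cite{sosa}, avoiding the recursion theorem but being explicit about where transfinite induction is used. Fix a linear order $X$ with $WO(X)$ and $\Sigma^0_1$ formulas $\varphi,\psi$ such that $\forall Y\,\forall n\,(\varphi(n,Y)\leftrightarrow\neg\psi(n,Y))$. Since $\varphi$ and $\psi$ are $\Sigma^0_1$ and complementary, the predicate $\varphi(n,Y)$ is in fact $\Delta^0_1$ relative to $Y$, uniformly: there is a single $\Delta^0_1$ formula (equivalently a functional) that, given $Y$, decides $\varphi(n,Y)$ for all $n$. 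This is the key observation that keeps the complexity down.

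First I would make precise the notion of a \emph{partial solution below $j$}: for $j\in X$, a set $Z$ is a partial solution below $j$ if $Z\subseteq\{\langle n,a\rangle: a<_X j\}$ and for every $a<_X j$, $Z^a=\{\langle m,b\rangle: b<_X a,\ \langle m,b\rangle\in Z\}$ satisfies $\langle n,a\rangle\in Z\leftrightarrow\varphi(n,Z^a)$. One shows by $\Pi^0_2$ transfinite induction along $X$ (available in $\ACA$, and this is exactly the step that goes beyond $\RCA$) that for every $j\in X$ there exists a unique partial solution below $j$; uniqueness is the easier arithmetic transfinite induction already noted in the excerpt before the definition of $\mathsf{ETR}$. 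Concretely, the inductive statement "$\exists Z\,(Z$ is the partial solution below $j)$" is $\Sigma^0_2$ (existence of $Z$ together with an arithmetic condition), so transfinite induction on it is $\Sigma^0_2$—or, packaging existence and uniqueness together, $\Pi^0_2$—transfinite induction, which is equivalent over $\RCA$ to $\Sigma^0_1$ transfinite induction and holds in $\ACA$. The successor and limit steps of the induction are routine: at a successor $a^+$ one appends $\{\langle n,a\rangle:\varphi(n,Z^a)\}$ using $\Delta^0_1$-comprehension relative to the partial solution $Z$ below $a^+$ restricted appropriately; at a limit one takes the union of the partial solutions below all $a<_X j$, which exists by arithmetic comprehension since membership in that union is arithmetic in $X$ and the (definable) sequence of approximations.

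The final step assembles the full solution. By the induction just described, for each $j\in X$ there is a unique partial solution $Z_j$ below $j$, and these cohere ($Z_j\restriction\{\langle n,a\rangle:a<_X i\}=Z_i$ for $i<_X j$), so $Y=\bigcup_{j\in X} Z_j$ is well-defined; formally, $\langle n,a\rangle\in Y$ iff $\langle n,a\rangle\in Z_j$ for some (equivalently, any) $j$ with $a<_X j$, and this is an arithmetic condition, so $Y$ exists by $\ACA$. One then checks $H_\varphi(X,Y)$: $LO(X)$ holds by hypothesis, and for each $a$, $Y^a$ as defined in $H_\varphi$ equals the partial solution $Z_a$ below $a$, so $\langle n,a\rangle\in Y\leftrightarrow\langle n,a\rangle\in Z_{a^+}\leftrightarrow\varphi(n,Z_a)\leftrightarrow\varphi(n,Y^a)$ as required.

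The main obstacle—and the whole point of including this proof, as the surrounding text makes clear—is the honest bookkeeping of the transfinite induction: one must verify that the statement driving the recursion is genuinely $\Pi^0_2$ (or $\Sigma^0_1$ after the standard reduction), not merely arithmetic of unbounded complexity, which is what makes $\ACA$ (rather than something stronger) suffice, and one must not tacitly invoke $\Sigma^0_1$ transfinite induction in $\RCA$ (the gap in \cite{GM}). The successor/limit case analysis and the final comprehension are routine applications of $\Delta^0_1$- and arithmetic comprehension once the inductive framework is set up correctly.
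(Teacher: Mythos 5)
There is a genuine gap, and it is the central one: your inductive statement is not $\Sigma^0_2$ as claimed. The formula ``$\exists Z\,(Z$ is the partial solution below $j)$'' begins with a \emph{set} quantifier $\exists Z$, so it is $\Sigma^1_1$, not $\Sigma^0_2$; the parenthetical ``existence of $Z$ together with an arithmetic condition'' conflates a second-order existential with a first-order one. Transfinite induction for $\Sigma^1_1$ formulas is not available in $\ACA$ --- indeed, if your argument went through, it would prove $\mathsf{ATR}_0$ in $\ACA$ (since the complexity of the stage-$j$ partial solution grows hyperarithmetically with the order type of $j$ in $X$). The same issue recurs at the limit step and in the final assembly: to form $Z_j = \bigcup_{a<_X j} Z_a$ or $Y=\bigcup_j Z_j$ by arithmetic comprehension you would need the \emph{sequence} $\langle Z_a\rangle$ to already exist as a coded set, which is precisely what is not yet available; writing the membership condition out explicitly gives a $\Sigma^1_1$ formula (``there exists a partial solution $W$ below some $b>_X a$ with $\langle n,a\rangle\in W$''), not an arithmetic one. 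The uniqueness half is fine (it is arithmetic transfinite induction with the two candidate sets as parameters), but existence cannot be handled this way.

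The paper avoids exactly this problem by never asserting the existence of the infinite partial solutions at all. It works with \emph{finite} partial functions $f:\subseteq\omega\to\{0,1\}$ (``$j$-approximations'') and proves two claims by transfinite induction along $X$: (i) no two $j$-approximations give contradictory verdicts $\varphi_0(m,f)$ vs.\ $\psi_0(m,g)$, which is $\Pi^0_1$ in form; and (ii) every $j$-approximation can be extended to one that decides $m$, which is $\Pi^0_2$ in form and is the one genuine use of $\Sigma^0_1$/$\Pi^0_2$ transfinite induction beyond $\RCA$. Because the quantifiers in both claims range over finite objects (codes for finite partial functions are numbers), the induction hypotheses really are arithmetic with parameter $X$, so the induction is legitimate in $\ACA$. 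The output set $Y$ is then defined by the $\Delta^0_1$ formula ``$j\in X$ and there is a $j$-approximation $f$ with $\varphi_0(m,f)$'', with $\Delta^0_1$-ness supplied by the two claims --- no set quantifier anywhere. The finite-approximation reformulation is the essential technical move your write-up is missing; without it there is no way to keep the induction inside the arithmetical hierarchy.
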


\begin{proof}
Fix a well order $X$ and $\Sigma^0_1$ formulas $\varphi$ and $\psi$. Throughout this proof, we let $f$, $g$ and $h$ be variables 
denoting finite partial functions from $\omega$ to $\{ 0,1 \}$ coded in the canonical way as finite sets of ordered pairs. We write $f \preceq g$ 
(or $f \prec X$) if $f \subseteq g$ (or $f \subseteq \chi_X$) as sets of ordered pairs. By the usual normal form results (e.g.~Theorem II.2.7 in Simpson), 
we fix a $\Sigma^0_0$ formula $\varphi_0$ such that
\[
\forall Y \, \forall n \, \big( \varphi(n,Y) \leftrightarrow \exists f \, (f \prec Y \wedge \varphi_0(n,f)) \big)
\]
and such that if $\varphi_0(n,f)$ and $f \prec g$, then $\varphi_0(n,g)$. We fix a formula $\psi_0$ related to $\psi$ in the same manner. Since 
$\varphi(n,Y) \leftrightarrow \neg \psi(n,Y)$, we cannot have compatible $f$ and $g$ such that $\varphi_0(n,f)$ and $\psi_0(n,g)$. 

Our goal is to use partial functions $f$ as approximations to a set $Y$ such that $H_{\varphi}(X,Y)$. Therefore, we view $\text{dom}(f)$ as consisting of coded pairs 
$\langle n,a \rangle$. For $f$ to be a suitable approximation to $Y$, we need that if $\langle n,a \rangle \in \text{dom}(f)$ and $a \not \in X$, then $f(\langle n,a \rangle) = 0$. 
Similarly, if $f$ is an approximation to $Y^j$, we need that $f(\langle n,a \rangle) = 0$ whenever $\langle n,a \rangle \in \text{dom}(f)$ and $a \geq_X j$. These observations 
motivate the following definitions.

Let $f$ be a finite partial function and let $i \in X$. We define 
\[
f^i = f \upharpoonright \{ \langle n,a \rangle : n \in \omega \wedge a <_X i \}.
\]
We say $g \succeq f$ is an $i$-\textit{extension of} $f$ if for all $\langle n,a \rangle \in \text{dom}(g) - \text{dom}(f)$, $g(\langle n,a \rangle) = 0$ and either 
$a \not \in X$ or $i \leq_X a$. 

For $j \in X$, $f$ is a $j$-\textit{approximation} if the following conditions hold.
\begin{itemize}
\item If $\langle n,a \rangle \in \text{dom}(f)$ with $a \not \in X$ or $j \leq_X a$, then $f(\langle n,a \rangle) = 0$.
\item If $\langle n,a \rangle \in \text{dom}(f)$ and $a <_X j$, then 
\begin{itemize}
\item if $f(\langle n,a \rangle) = 1$, then there is an $a$-extension $h$ of $f^a$ such that $\varphi_0(n,h)$, and
\item if $f(\langle n,a \rangle) = 0$, then there is an $a$-extension $h$ of $f^a$ such that $\psi_0(n,h)$.
\end{itemize}
\end{itemize}
Note that if $f$ is a $j$-approximation and $i <_X j$, then $f^i$ is an $i$-approximation. Also, if $f$ is a $j$-approximation and $g$ is a $j$-extension of $f$, then $g$ 
is a $j$-approximation.

\begin{claim}
For all $j \in X$, there do not exist $m \in \omega$ and $j$-approximations $f$ and $g$ such that $\varphi_0(m,f)$ and $\psi_0(m,g)$.  
\end{claim}

The proof is by transfinite induction on $j$. Fix the least $j \in X$ for which this property fails and fix witnesses $m$, $f$ and $g$. To derive a contradiction, 
it suffices to show that $f$ and $g$ are compatible. Fix $\langle k,a \rangle$ such that both $f(\langle k,a \rangle)$ and $g(\langle k,a \rangle)$ are defined. If $a \not \in X$ 
or $j \leq_X a$, then $f(\langle k,a \rangle) = g(\langle k,a \rangle) = 0$. 

Suppose for a contradiction that $a <_X j$ and $f(\langle k,a \rangle) \neq g(\langle k,a \rangle)$. Without loss of generality, $f(\langle k,a \rangle) = 1$ and 
$g(\langle k,a \rangle) = 0$. Fix $a$-extensions $h$ and $h'$ of $f^a$ and $g^a$ respectively such that $\varphi_0(k,h)$ and $\psi_0(k,h')$. 
Since $f$ is a $j$-approximation, $f^a$ is an $a$-approximation, and since $h$ is an $a$-extension of $f^a$, $h$ is also an $a$-approximation. Similarly, $h'$ is an 
$a$-approximation. Therefore, we have $k \in \omega$, $a <_X j$ and $a$-approximation $h$ and $h'$ such that $\varphi_0(k,h)$ and $\psi_0(k,h')$ contradicting the 
minimality of $j$.

\begin{claim}
For any $j$-approximation $f$ and any $m \in \omega$, there is a $j$-approximation $g \succeq f$ such that either $\varphi_0(m,g)$ or $\psi_0(m,g)$. 
\end{claim}

The proof is again by transfinite induction on $j$. Fix the least $j$ for which this property fails and fix witnesses $f$ and $m$. Let $\langle n_s,i_s \rangle$ enumerate 
the pairs not in the domain of $f$. Below, we define a sequence $f=f_0 \preceq f_1 \preceq \cdots$ of $j$-approximations such that $f_{s+1}(\langle n_s,i_s \rangle)$ is 
defined. Let $Y$ be the set with $\chi_Y = \cup_s f_s$. Either $\varphi(m,Y)$ or $\psi(m,Y)$ holds, and so 
there is a $g \prec Y$ such that $\varphi_0(m,g)$ or $\psi_0(m,g)$ holds. Fixing $s$ such that $g \preceq f_s$ shows that either $\varphi_0(m,f_s)$ or $\psi_0(m,f_s)$ 
holds for the desired contradiction. 

To define $f_{s+1}$, we need to extend $f_s$ to a $j$-approximation $f_{s+1}$ with $\langle n_s,i_s \rangle \in \text{dom}(f_{s+1})$. We break into several cases. 
If $f_s(\langle n_s,i_s \rangle)$ is already defined, let $f_{s+1} = f_s$. Otherwise, if $i_s \not \in X$ or $j \leq_X i_s$, set $f_{s+1}(\langle n_s,i_s \rangle) = 0$ and leave 
the remaining values as in $f_s$. In both cases, it is clear that $f_{s+1}$ is a $j$-approximation.

Finally, if $i_s <_X j$ and $f_s(\langle n_s,i_s \rangle)$ is undefined, we apply the induction hypothesis to the $i_s$-approximation $f_s^{i_s}$ to get an $i_s$-approximation 
$g \succeq f_s^{i_s}$ such that either $\varphi_0(n_s,g)$ holds or $\psi_0(n_s,g)$ holds. Define $f_{s+1}$ as follows. 
\begin{itemize}
\item For $\langle m,a \rangle \in \text{dom}(g)$ with $a <_X i_s$, set $f_{s+1}(\langle m,a \rangle) = g(\langle m,a \rangle)$. 
\item For $\langle m,a \rangle \in \text{dom}(f_s)$ with $i_s \leq_X a$ or $a \not \in X$, set $f_{s+1}(\langle m,a \rangle) = f_s(\langle m,a \rangle)$.
\item Set $f_{s+1}(\langle n_s,i_s \rangle) = 1$ if $\varphi_0(n_s,g)$ holds and $f_{s+1}(\langle n_s,i_s \rangle) = 0$ if $\psi_0(n_s,g)$ holds. 
\end{itemize}
It is straightforward to verify that $f_s \prec f_{s+1}$, $g$ is an $i_s$-extension of $f_{s+1}^{i_s}$ and $f_{s+1}$ is a $j$-approximation, completing the proof of the claim.

We define the set $Y$ for which we will show $H_{\varphi}(X,Y)$ holds by $\langle m,j \rangle \in Y$ if and only if $j \in X$ and there is a $j$-approximation $f$ such that $\varphi_0(m,f)$. It follows from the claims above that 
$\langle m,j \rangle \not \in Y$ if and only if either $j \not \in X$ or there is a $j$-approximation $f$ such that $\psi_0(m,f)$. Therefore, $Y$ has a $\Delta^0_1$ definition. 
The next two claims show that $H_{\varphi}(X,Y)$ holds, completing our proof.

\begin{claim}
If $f$ is a $j$-approximation, then $f \prec Y^j$. 
\end{claim}

Consider $\langle m,a \rangle \in \text{dom}(f)$. If $a \not \in X$ or $j \leq_X a$, then $f(\langle m,a \rangle) = Y^j(\langle m,a \rangle) = 0$. Suppose $a <_X j$. If 
$f(\langle m,a \rangle) = 1$, then there is an $a$-extension $g$ of $f^a$ such that $\varphi_0(m,g)$. Since $f^a$ is an $a$-approximation and $g$ is an $a$-extension of 
$f^a$, $g$ is an $a$-approximation. Therefore, $\langle m,a \rangle \in Y$ by definition and hence $\langle m,a \rangle \in Y^j$. By similar reasoning, 
if $f(\langle m,a \rangle) = 0$, then $\langle m,a \rangle \not \in Y$ and hence $\langle m,a \rangle \not \in Y^j$. 

\begin{claim}
$\langle m,j \rangle \in Y$ if and only if $\varphi(m,Y^j)$. 
\end{claim}

Assume that $\langle m,j \rangle \in Y$ and fix a $j$-approximation $f$ such that $\varphi_0(m,f)$. Since $f \prec Y^j$, $\varphi(m,Y^j)$. 
For the other direction, assume that $\varphi(m,Y^j)$. Fix a $j$-approximation $f$ such that either $\varphi_0(m,f)$ or $\psi_0(m,f)$. Since $f \prec Y^j$ and 
$\varphi(m,Y^j)$, we must have $\varphi_0(m,f)$ and therefore $\langle m,j \rangle \in Y$ by definition. 
\end{proof}

We recall some notation and facts from Simpson \cite{sosa} to state the equivalence of $\mathsf{ATR}_0$ we will prove. We let $TJ(X)$ denote the Turing jump in $\ACA$ given 
by fixing a universal $\Pi^0_1$ formula. We use the standard recursion theoretic notations $\Phi_e^X$ and $\Phi_{e,s}^X$ with the understanding that 
they are defined by this fixed universal formula. 

$\mathcal{O}_+(a,X)$ is the arithmetic statement that $a = \langle e,i \rangle$, $e$ is an $X$-recursive index of an $X$-recursive linear order $\leq_e^X$ 
and $i \in \text{field}(\leq_e^X)$. $\mathcal{O}_+^X = \{ a : \mathcal{O}_+(a,X) \}$ exists in $\ACA$. For $a, b \in \mathcal{O}_+^X$, we write $b <_{\mathcal{O}}^X a$ 
if $a = \langle e,i \rangle$, $b = \langle e,j \rangle$ and $j <_e^X i$. For $a \in \mathcal{O}_+^X$, the set $\{ b : b <_{\mathcal{O}}^X a \}$ exists in $\ACA$. 

$\mathcal{O}(a,X)$ is the $\Pi^1_1$ statement 
$\mathcal{O}_+(a,X) \wedge WO(\{ b : b <_{\mathcal{O}}^X a \})$. Intuitively, $\mathcal{O}(a,X)$ says that $a = \langle e,i \rangle$ is an $X$-recursive ordinal notation 
for the well ordering given by the restriction of $\leq_e^X$ to $\{ j : j <_e^X i \}$. In $\mathsf{ATR}_0$, if $\mathcal{O}(a,X)$, then the set 
\[
H_a^X = \{ \langle y,0 \rangle : y \in X \} \cup \{ \langle y, b+1 \rangle : b <_{\mathcal{O}}^X a \wedge y \in TJ(H_b^X) \}
\]
exists. In $\ACA$, there is an arithmetic formula $H(a,X,Y)$ which, under the assumption that $\mathcal{O}(a,X)$, holds if and only 
if $Y = H_a^X$.  

By Theorem VIII.3.15 in Simpson \cite{sosa}, $\mathsf{ATR}_0$ is equivalent over $\ACA$ to 
\[
\forall X \, \forall a \, ( \mathcal{O}(a,X) \rightarrow H_a^X \text{ exists}).
\]
If $\mathcal{O}(a,X)$ with $a = \langle e,i \rangle$, then we can assume without loss of generality that there are 
$a'$ and $a''$ such that $\mathcal{O}(a',X)$, $\mathcal{O}(a'',X)$ and $a <_{\mathcal{O}}^X a' <_{\mathcal{O}}^X a''$ by adding two new successors 
of $i$ in $\leq_e^X$ if necessary. Therefore, to prove $\mathsf{ATR}_0$, it suffices to fix $a$ and $X$ such that $\mathcal{O}(a,X)$ and prove 
$\forall c <_{\mathcal{O}}^X b \, (H_c^X \text{ exists})$ for each $b <_{\mathcal{O}}^X a$.  

\begin{thm}[$\ACA$]
\label{thm:trivial}
The statement ``every trivial Borel code has an evaluation map'' implies $\mathsf{ATR}_0$. 
\end{thm}

\begin{proof}
Fix $a$ and $X$ such that $\mathcal{O}(a,X)$, so the restriction of $<_{\mathcal{O}}^X$ to $\{ b : b <_{\mathcal{O}}^X a \}$ is a well order. 
Using $\mathsf{ETR}$, we define trivial Borel codes $B_{x,b}$ for $x \in \omega$ by transfinite recursion on $b <_{\mathcal{O}}^X a$. We explain the intuitive construction 
before the formal definition. 

Let $b <_{\mathcal{O}}^X a$ and $x \in \omega$. We want to define a trivial Borel code $B_{x,b}$ such that if $f$ is an evaluation map for $B_{x,b}$, then 
$f(\lambda) = 1$ if and only if 
$x \in TJ(H_b^X)$. We label $\lambda$ with $\cup$. For each binary string 
$\sigma$ such that $\Phi_{x,|\sigma|}^{\sigma}(x)$ converges, we add a successor $\langle n_\sigma \rangle$.  Here $\sigma\mapsto n_\sigma$ is just some 
primitive recursive bijection between $2^{<\omega}$ and $\omega$.
It follows that $f(\lambda) =1$ if and only if there is a $\sigma$ such that $\Phi_{x,|\sigma|}^{\sigma}(x)$ converges and 
$f(\langle n_\sigma \rangle) = 1$. (In case $\Phi_{x,|\sigma|}^{\sigma}(x)$ always diverges, 
we may also add a leaf $\langle n \rangle$ which is labeled with $\emptyset$. In this case, $f(\lambda) = f(\langle n \rangle) = 0$ and $x \not \in TJ(H_b^X)$ which is what we want.)

Next, we want to ensure $f(\langle n_\sigma \rangle) = 1$ if and only if $\sigma \prec H_b^X$.  We label $\langle n_\sigma\rangle$ with $\cap$,
and for each $k < |\sigma|$, we add a 
successor $\langle n_\sigma, k \rangle$. 
We want $f(\langle n_\sigma, k \rangle) = 1$ if and only if $\sigma(k) = H_b^X(k)$. We 
break into cases to determine the extensions of $\langle n_\sigma, k \rangle$. 

For the first case, suppose $k = \langle y,0 \rangle$. We want $f(\langle n_\sigma, k \rangle) = 1$ if and only if $y \in X$. If $\sigma(k) = X(y)$, we label this node with 
the entire space, and if $\sigma(k) \neq X(y)$, we label this node with $\emptyset$. In either case, the successor nodes will be leaves so we have $f(\langle n_\sigma, k \rangle) = 1$ if and only 
if $k \in H_b^X$. 

For the second case, suppose $k = \langle y, c+1 \rangle$ and $c <_{\mathcal{O}}^X b$. By the induction hypothesis, we have defined the trivial Borel code $B_{y,c}$ already. If 
$\sigma(k) = 1$, then we label $\langle n_\sigma, k \rangle$
with $\cup$, and attach to it a copy of $B_{y,c}$, treating $\langle n_\sigma, k \rangle$ as the root of $B_{y,c}$. 
The map $f$ restricted to the subtree above $\langle n_\sigma, k \rangle$ is an evaluation map for $B_{y,c}$ and hence by the inductive hypothesis
\[
f(\langle n_\sigma, k \rangle) = 1 \Leftrightarrow y \in TJ(H_c^X) \Leftrightarrow k \in H_b^X \Leftrightarrow \sigma(k) = H_b^X(k).
\] 
On the other hand, if $\sigma(k) = 0$, then we label $\langle n_\sigma, k \rangle$
with $\cap$ and extend it by a copy of $\overline{B}_{y,c}$. By the inductive hypothesis, we have 
\[
f(\langle n_\sigma, k \rangle) = 1 \Leftrightarrow y \not \in TJ(H_c^X) \Leftrightarrow k \not \in H_b^X \Leftrightarrow \sigma(k) = H_b^X(k).
\]

For the third case, suppose that $k = \langle y,c+1 \rangle$ and $c \not <_{\mathcal{O}}^X b$. In this case, we know $H_b^X(k) = 0$. If $\sigma(k) = 0$,
we label $\langle n_\sigma, k\rangle$ with the entire space, and if $\sigma(k) =1$
we label it with $\emptyset$.  

The formal construction follows this outline. To simplify the notation, for a trivial Borel code $B$, we let $B^1 = B$ and $B^0 = \overline{B}$. 
Since ``$\Phi_{x,|\sigma|}^{\sigma}(x)$ converges'' is a bounded quantifier statement and 
$c <_{\mathcal{O}}^X b$ is a $\Delta^0_1$ statement with parameter $X$, the following recursion on $b <_{\mathcal{O}}^X a$ can be done with $\mathsf{ETR}$. 
For each $x \in \omega$, we put $\lambda$ in $B_{x,b}$ with label $\cup$.
For each $\sigma$ such that $\Phi_{x, |\sigma|}^{\sigma}(x)$ converges, we put $\langle n_\sigma \rangle$ and $\langle n_\sigma, k \rangle$ in $B_{x,b}$ for all $k < |\sigma|$. 
We label $\langle n_\sigma\rangle$ with $\cap$.
We extend $\langle n_\sigma, k \rangle$ as follows.
\begin{itemize}
\item For $k = \langle y,0 \rangle$: if $\sigma(k) = X(y)$, then $\langle n_\sigma, k \rangle$
is labeled with the whole space, and if $\sigma(k) \neq X(y)$, then it is labeled
with $\emptyset$.
\item For $k=\langle y,c+1 \rangle$ with $c <_{\mathcal{O}}^X b$, $\langle n_\sigma, k \rangle^{\smallfrown}\tau \in B_{x,b}$ for all $\tau \in B_{y,c}^{\sigma(k)}$, with 
labels inherited from $B_{y,c}^{\sigma(k)}$.
\item For $k=\langle y,c+1 \rangle$ with $c \not <_{\mathcal{O}}^X b$, $\langle n_\sigma, k\rangle$ gets labeled with the whole set if $\sigma(k)=0$ and labeled with
$\emptyset$ if $\sigma(k)=1$.
\end{itemize}

This completes the construction of the trivial Borel codes $B_{x,b}$ for $b <_{\mathcal{O}}^X a$ by $\mathsf{ETR}$. To complete the proof, we fix an arbitrary 
$b <_{\mathcal{O}}^X a$ and show that $\forall c <_{\mathcal{O}}^X b \, (H_c^X \text{ exists})$. 

Fix an index $x$ and $s \in \omega$ such that $\Phi_{x,s}^{1^s}(x)$ converges. Let $N$ be the least 
value of $s$ witnessing this convergence so $\Phi_{x,s}^{1^s}(x)$ converges for all $s \geq N$. Let $f$ be an evaluation map for $B_{x,b}$. 

For $c <_{\mathcal{O}}^X b$ and $y \in \omega$, let $\sigma = 1^{N+k}$ where $k = \langle y,c+1 \rangle$. Define 
$f_{y,c}(\tau) = f(\langle n_\sigma,k \rangle^{\smallfrown}\tau)$. We claim 
$f_{y,c}$ is an evaluation map for $B_{y,c}$. By the choice of $x$, $\Phi_{x,|\sigma|}^{\sigma}(x)$ converges.
Since $c <_{\mathcal{O}}^X b$ and $\sigma(k) = 1$, we have $\langle n_\sigma, k \rangle^{\smallfrown}\tau \in B_{x,b}$ if 
and only if $\tau \in B_{y,c}$. Therefore, $f_{y,c}$ is defined on $B_{y,c}$ and it satisfies the conditions for an evaluation map because $f$ does.

Recall that $H(x,X,Y)$ is a fixed arithmetic formula such that if $\mathcal{O}(x,X)$, then $H(x,X,Y)$ holds if and only if $Y= H_x^X$.  Define
\[
Z = \{ \langle y,0 \rangle : y \in X \} \cup \{ k : k=\langle y,c+1 \rangle \, \wedge \, c <_{\mathcal{O}}^X b \, \wedge \, f(\langle n_\sigma,k \rangle) = 1 \}.
\]
For $c <_{\mathcal{O}}^X b$, let $Z^c = \{ \langle y,r \rangle \in Z : r=0 \, \vee \, r-1 <_{\mathcal{O}}^X c \}$. 
We show the following properties by simultaneous arithmetic induction on $c <_{\mathcal{O}}^X b$.
\begin{enumerate}
\item $H(c,X,Z^c)$ holds. That is, $Z^c = H_c^X$. 
\item For all $y$, $f_{y,c}(\lambda) = 1$ if and only if $y \in TJ(Z^c) = TJ(H_c^X)$.
\end{enumerate} 
These properties imply $\forall c <_{\mathcal{O}}^X b \, (H_c^X \text{ exists})$ completing our proof.

Fix $c <_{\mathcal{O}}^X b$ and assume (1) and (2) hold for $d <_{\mathcal{O}}^X c$. To see (1) holds for $c$, fix $k$. If $k = \langle y,0 \rangle$, then 
$k \in Z^c \Leftrightarrow y \in X \Leftrightarrow k \in H_c^X$. Suppose $k = \langle y,d+1 \rangle$. If $d \not <_{\mathcal{O}}^X c$, then $k \not \in H_c^X$ and 
$k \not \in Z^c$. If $d <_{\mathcal{O}}^X c$, then
\[
k \in Z^c \Leftrightarrow f(\langle n_\sigma,k \rangle) = 1 \Leftrightarrow f_{y,d}(\lambda)=1.
\]
By the induction hypothesis, $k \in Z^c$ if and only if $y \in TJ(Z^d) = TJ(H_d^X)$, which holds if and only if $k \in H_c^X$, completing the proof of (1).

To prove (2), fix $y$ and let $k = \langle y, c+1 \rangle$. By definition, 
\[
k \in Z^c \Leftrightarrow f_{y,c}(\lambda) = f(\langle n_\sigma, k \rangle) = 1, 
\]
and $y \in TJ(Z^c) = TJ(H_c^X)$ if and only if there is a $\sigma$ such that $\Phi_{y,|\sigma|}^{\sigma}(y)$ converges and 
$\sigma \prec Z^c = H_c^X$. 

Suppose there are no $\sigma$ such that $\Phi_{y,|\sigma|}^{\sigma}(y)$ converges. In this case, $y \not \in TJ(H_c^X)$ and $f_{y,c}(\lambda) = 0$. Therefore $f_{y,c}(\lambda) = 1$ if and only if $y \in TJ(H_c^X)$ as required.

Suppose $\Phi_{y,|\sigma|}^{\sigma}(y)$ converges for some $\sigma$. For any such $\sigma$, $\langle n_\sigma, k \rangle \in B_{y,c}$ for all $k < |\sigma|$. 
By the induction hypothesis 
and the case analysis in the intuitive explanation of the construction, we have $f_{y,c}(\langle n_\sigma \rangle) = 1$ if and only if 
$\sigma \prec H_c^X = Z^c$, and therefore, $f_{y,c}(\lambda) = 1$ if and only if there is a $\sigma$ such that $\Phi_{y,|\sigma|}^{\sigma}(y)$ converges and 
$\sigma \prec H_c^X$, completing the proof of (2) and of the theorem. 
\end{proof}

We conclude with a proof of Theorem \ref{ATRequiv}. 

\begin{proof}
Lemma V.3.3 in Simpson \cite{sosa} shows (1) implies (2) in the space $2^{\omega}$ and the proof translates immediately to $(\omega)^k$. By Proposition \ref{prop:BP}, 
(1) implies (3). It follows from Theorem \ref{thm:trivial} that (2) implies (1). We show (3) implies (2). Let $B$ be a Borel code. Fix a Baire code  
$U$, $V$ and $D_n$ for $B$. Since each $D_n$ and $U \cup V$ is a dense open set, there is an $x \in (U \cup V) \cap \cap_{n \in \omega} D_n$. 
If $x \in U$, then by the definition of a Baire code, $x \in B$, and similarly, if $x \in V$, then $x \not \in B$. Therefore, we have a partition $x$ such that $x \in B$ or 
$x \not \in B$ as required. 
\end{proof}

\section{Open Questions}\label{sec:open}

While Figure \ref{fig:1.1} summarizes the known implications 
among the studied principles, in most cases it is not known 
whether the results are optimal.  It is particularly 
dissatisfying that the best upper bound for these principles 
remains $\Pi^1_1$-$\mathsf{CA}_0$.  Observe 
that, on the basis of the proof of $\CDRT^k_\ell$ given 
in Theorem \ref{thm.3.31}, any upper bound on the strength 
of the Carlson-Simpson Lemma $\CSL({k-1},\ell)$ would also
imply a related upper bound on the strength of $\CDRT^k_\ell$.
Therefore, it would be interesting to know the following:

\begin{question}
For any $k\geq 3$, does $\CSL(k,\ell)$ follow from $\ATR$?
\end{question}

The best known upper bound for $\CSL(2,\ell)$ is $\ACA$; it is 
shown in \cite{LiuMoninPatey} that the stronger principle 
$\OVW(2,\ell)$ follows from $\ACA$.  

Turning attention now to lower bounds, the principles 
$\CDRT^k_\ell$ for $k\geq 4$
 are not obviously implied by $\HT$ or 
$\ACA^+$.  We wonder whether an implication might go the 
other way.

\begin{question}
For any $k\geq 4$, does $\cDRT^k_\ell$ imply $\HT$ or $\ACA^+$?
\end{question}

When $k\geq 4$, it is known that $\CDRT^k_\ell$ implies $\ACA$ 
(this was proved for $\ODRT^k_\ell$ in \cite{ms}).
On the other hand, while $\cDRT^3_2$ is provable from 
Hindman's Theorem, the best lower bound we have 
on $\CDRT^3_2$ is $\RT^2_2$.  Furthermore, nothing about 
the relationship of $\CDRT^3_2$ and $\ACA$ is known.

\begin{question}
Is $\cDRT^3_2$ comparable to $\ACA$?
\end{question}

For the $k=2$ case, can Theorem \ref{thm:sW} be strengthened in the following way?

\begin{question}
Is $\Delta^0_n\text{-}\DRT^2_2  \equiv_{\mathrm{sW}}  D^n_2$?
\end{question}

These are just a few of the many questions that remain 
concerning these principles.

\bibliography{Dual}
\bibliographystyle{plain}

\end{document}